\def\PP{{\mathbb P}}
\def\EE{{\mathbb E}}
\let\epsilon\varepsilon
\def\({\left(}
\def\){\right)}  
\def\llfloor{\left\lfloor}
\def\rrfloor{\right\rfloor}
\def\llceil{\left\lceil}
\def\rrceil{\right\rceil}
\newcommand{\etal}{\textsl{et al.}}
\newcommand{\rbabs}[1]{\left|{#1}\right|}
\def \rb_ind{\mathds{1}}
\DeclareMathOperator{\homg}{hom}
\DeclareMathOperator{\sub}{sub}
\begin{document}

\newtheorem{theorem}{Theorem}[section]
\newtheorem{cor}[theorem]{Corollary}
\newtheorem{lemma}[theorem]{Lemma}
\newtheorem{fact}[theorem]{Fact}
\newtheorem{property}[theorem]{Property}
\newtheorem{proposition}[theorem]{Proposition}
\newtheorem{claim}[theorem]{Claim}
\newtheorem{definition}[theorem]{Definition}
\theoremstyle{definition}
\newtheorem{example}[theorem]{Example}
\newtheorem{remark}[theorem]{Remark}
\newcommand\eps{\varepsilon}
\newcommand\la {\lambda}
\newcommand{\E}{\mathbb E}
\newcommand{\Var}{{\rm Var}}
\newcommand{\Prob}{\mathbb{P}}
\newcommand{\N}{{\mathbb N}}
\newcommand{\eqn}[1]{(\ref{#1})}
\newcommand{\carlos}[1]{{\bf [~Carlos Mar 1:\ } {\em #1}{\bf~]}}

\def\llfloor{\left\lfloor}
\def\rrfloor{\right\rfloor}
\def\llceil{\left\lceil}
\def\rrceil{\right\rceil}

\title{Limits of permutation sequences through permutation regularity}

\author[C. Hoppen]{Carlos Hoppen}
\address{Instituto de Matem\'atica, UFRGS -- Avenida Bento Gon\c{c}alves, 9500,
 91509-900, Porto Alegre, RS, Brazil}
\email{\tt choppen@ime.usp.br}
\author[Y. Kohayakawa]{Yoshiharu Kohayakawa}
\address{Instituto de Matem\'atica e Estat\'\i stica, USP -- Rua do Mat\~ao
  1010, 05508--090 S\~ao Paulo, SP, Brazil} \email{\tt yoshi@ime.usp.br}
\author[C. G. Moreira]{Carlos Gustavo Moreira}
\address{IMPA -- Estrada Dona Castorina 110, 22460--320 Rio de
  Janeiro, RJ, Brazil} \email{\tt gugu@impa.br}
\author[R. M. Sampaio]{Rudini Menezes Sampaio}
\address{Departamento de Computa\c c\~ao, Centro de Ci\^encias, UFC --
  Campus do Pici, Bloco 910, 60451--760 Fortaleza, CE, Brazil} \email{\tt rudini@lia.ufc.br}

\thanks{The statements of some of the results of this paper have
  appeared in the Proc.~of the $21^{\textrm{st}}$ ACM-SIAM Symposium
  on Discrete Mathematics (SODA) (2010).  The results in
  Section~\ref{section_regularity} are stated in the Proc.~of the V
  Latin-American Algorithms, Graphs and Optimization Symposium (LAGOS)
  (2009)}

\thanks{The first author acknowledges the support by FAPERGS
  (Proc.~10/0388-2), FAPESP (Proc.~2007/56496-3), and CNPq
  (Proc.~484154/2010-9).  The second author was partially supported by
  CNPq (Proc.~308509/2007-2, 484154/2010-9). The fourth author was
  partially supported by Funcap (Proc.~07.013.00/09) and CNPq
  (Proc.~484154/2010-9).}

\begin{abstract}
A permutation sequence $(\sigma_n)_{n \in \mathbb{N}}$
is said to be convergent if, for every fixed permutation $\tau$, the density of occurrences of $\tau$ in the elements of the sequence converges. We prove that such a convergent sequence has a natural limit object, namely a Lebesgue measurable function $Z:[0,1]^2 \to [0,1]$ with the additional properties that, for every fixed $x \in [0,1]$, the restriction $Z(x,\cdot)$ is a cumulative distribution function and, for every $y \in [0,1]$, the restriction $Z(\cdot,y)$ satisfies a ``mass'' condition. This limit process is well-behaved: every function in the class of limit objects is a limit of some permutation sequence, and two of these functions are limits of the same sequence if and only if they are equal almost everywhere. An important ingredient in the proofs is a new model of random permutations, which generalizes previous models and is interesting for its own sake. 
\end{abstract}

\maketitle

\section{Introduction}

As usual, a \emph{permutation} of a finite set $S$ is a bijective function of $S$ into itself. We shall focus on permutations $\sigma$ on the set $S=\{1,\ldots,n\}=[n]$, where $n$ is a positive integer, called the \emph{length} of $\sigma$ and is denoted by $|\sigma|$. In this work, a permutation $\sigma$ on $[n]$ is represented by $\sigma=(\sigma(1),\ldots,\sigma(n))$, and the set of all permutations on $[n]$ is denoted by $S_n$. We denote by $\mathcal{S}=\bigcup_{i=1}^\infty S_n$  the set of all finite permutations. A \emph{graph} $G=(V,E)$ is given by its \emph{vertex set} $V$ and its \emph{edge set} $E\subseteq \{ \{u,v\} \subset V \colon u \neq v\}$. In particular, the graphs considered here do not contain loops or multiple edges.

The main goal of this paper is to introduce a notion of convergence of a permutation sequence $(\sigma_n)_{n \in \mathbb{N}}$ and to 
identify a natural limit object for such a convergent 
sequence whose associated sequence of lengths $(|\sigma_n|)_{n \in \mathbb{N}}$ tends 
to infinity. Lov\'{a}sz and Szegedy~\cite{lovasz06} were concerned with these questions 
in the case of graph sequences $(G_n)_{n \in \mathbb{N}}$. This has been further investigated by Borgs \etal~in~\cite{borgs06b} and~\cite{borgs06c}, where, among other things, limits of graph sequences were used to characterize the testability of graph parameters. The convergence of sequences of combinatorial objects has also been addressed in other structures.
 For instance, graphs with degrees bounded by a constant have been addressed in the recent works of
 Benjamini and Schramm~\cite{benjamini_schramm} and of Elek~\cite{elek1,elek}. 
 Elek and Szegedy~\cite{elek_szegedy} studied this problem for hypergraphs.
 See Lov\'{a}sz~\cite{lovasz} for a comprehensive survey of this area. 

Currently, the main application of our results in this paper is in property testing of permutations. Roughly speaking, the objective of testing is to decide whether a combinatorial structure satisfies some property, or to estimate the value of some numerical function associated with this combinatorial structure, by considering only a randomly chosen substructure of sufficiently large, but constant size. These problems are called \emph{property testing} and \emph{parameter testing}, respectively; a property or parameter is said to be \emph{testable} if it can be estimated accurately in this way. The algorithmic appeal of testability is evident, as, conditional on sampling, this leads to reliable constant-time randomized estimators for the said properties or parameters.
 In~\cite{rudini08d} the present authors address these questions through the prism of subpermutations. Among their main results are a permutation result in the direction of Alon and Shapira's~\cite{alon_shapira3} work on the testability of hereditary graph properties, and a permutation counterpart of the characterization of testable parameters by Borgs \etal~\cite{borgsstoc}.

Given the similarity of our results with the ones obtained in~\cite{lovasz06}, we briefly describe that work. 
Central in the arguments is the notion of a \emph{homomorphism} of a
 graph $F$ into a graph $G$, a function $\phi:V(F) \rightarrow V(G)$ that 
maps the vertex set $V(F)$ of $F$ into the vertex set $V(G)$ of $G$ with the property that, 
for every edge $\{u,v\}$ in $F$, the pair $\{\phi(u),\phi(v)\}$ is an edge in $G$.
 The number of homomorphisms of $F$ into $G$ is denoted by $\homg(F,G)$, while the 
\emph{homomorphism density} of $F$ into $G$ is given by the probability that a uniformly chosen $\phi$ is a homomorphism:
\begin{equation}
t(F,G)=\frac{\homg(F,G)}{|V(G)|^{|V(F)|}}.
\end{equation}

It is natural to measure the similarity between two graphs $G$ and $G'$ by comparing the homomorphism density of different graphs $F$ into them.
This suggests defining a graph sequence $(G_n)_{n \in \mathbb{N}}$ as being \emph{convergent} if, for every (simple) graph $F$, the sequence of real numbers $(t(F,G_n))_{n \in \mathbb{N}}$ converges. Lov\'{a}sz and Szegedy identify a natural limit object for such a convergent sequence in the form of a symmetric Lebesgue measurable function $W:[0,1]^2 \rightarrow [0,1]$, called a \emph{graphon}, which satisfies
$$\lim_{n\to\infty}t(F,G_n)\ =\ t(F,W)\ :=\ \int_{[0,1]^k}\prod_{ij\in E(F)}W(x_i,x_j)dx_1\cdots dx_k$$
for every (simple) graph $F$, where $k=|V(F)|$. Despite its somewhat daunting appearance, the integral on the right-hand side is just a natural generalization of homomorphism density to the context of functions. 

Moreover, they show that, given a graphon $W$, there exists a graph sequence $(G_n(W))_{n \in \mathbb{N}}$ converging to $W$. The proof of this result relies on a randomized construction interesting for its own sake: given a graphon $W:[0,1]^2 \rightarrow [0,1]$ and a positive integer $n$, a \emph{$W$-random} graph $G(n,W)$ with vertex set $[n]$ is generated as follows. First, $n$ real numbers $X_1, \ldots, X_n$ are generated independently according to the uniform probability distribution on the interval $[0,1]$. Now, for every pair of distinct vertices $i$ and $j$ in $[n]$, the pair $\{i,j\}$ is added to the edge set of the graph independently with probability $W(X_i,X_j)$. It is important to point out that this model of random graph
generalizes the random graph model $G(n,H)$ introduced by Lov\'{a}sz and S\'{o}s~\cite{lovasz_sos08}, further generalizing the classical Erd\H{o}s--R\'{e}nyi model $G_{n,p}$~\cite{erdos_renyi}. In the context of graph sequences, Lov\'{a}sz and Szegedy show that, for any fixed graphon $W$, the sequence $(G(n,W))_{n \in \mathbb{N}}$ converges to $W$ with probability one.  

In our paper, a similar path is traced for permutation sequences $(\sigma_n)_{n \in \mathbb{N}}$. However, several difficulties of a technical nature arise, as the limit objects obtained here are more constrained. The r\^{o}le of the homomorphism density $t(F,G_n)$ of a fixed graph $F$ into $G_n$ is played here by the subpermutation density $t(\tau,\sigma_n)$ of a fixed permutation $\tau$ into $\sigma_n$, which we now define. By $[n]^m_<$ we mean the set of $m$-tuples in $[n]$ whose elements are in strictly increasing order.
\begin{definition}[Subpermutation density]\label{def.dens.perm1}
For positive integers $k, n \in \N$, let $\tau \in S_k$ and $\pi \in S_n$. 
The \emph{number of occurrences} $\Lambda(\tau,\pi)$  of the permutation $\tau$ in $\pi$ is the number of $k$-tuples
 $(x_1,x_2,\ldots,x_k) \in [n]^k_<$ such that
 $\pi(x_i) < \pi(x_j)$ if and only if $\tau(i)<\tau(j)$. 
The \emph{density} of the permutation $\tau$ as a \emph{subpermutation} of $\pi$ is given by 
\begin{equation}\label{def_subperm_density_formula_cases}
t(\tau,\pi)=
\begin{cases}
\binom{n}{k}^{-1}\Lambda(\tau,\sigma)  & \textrm{if } k\leq n\\
\;0  & \textrm{if  } k> n.
\end{cases}
\end{equation}
\end{definition}
As an illustration, there is a subpermutation  $\tau=(3,1,4,2)$ in $\sigma=(5,6,2,4,7,1,3)$, since $\sigma$ maps the index set $(1,3,5,7)$ onto $(5,2,7,3)$, which appears in the relative order given by~$\tau$. This concept may be used to define a convergent permutation sequence in a natural way: it is a sequence for which the densities of subpermutations of any given type converge. Formally, we have the following.
\begin{definition}[Convergence of a permutation sequence]\label{def_conv}
A permutation sequence $(\sigma_n)_{n \in \mathbb{N}}$ is \emph{convergent} if, for every fixed permutation $\tau$, the sequence of real numbers $(t(\tau,\sigma_n))_{n \in \mathbb{N}}$ converges.
\end{definition}
The interesting case occurs when the sequence of lengths $(|\sigma_n|)_{n \in \mathbb{N}}$ tends to infinity, since, as we shall see, every convergent permutation sequence $(\sigma_n)_{n \in \mathbb{N}}$ is otherwise eventually constant.

We prove that, when this is the case, any convergent permutation sequence has a natural limit object, called a \emph{limit permutation}. This limit object consists of a family of \emph{cumulative distribution functions}, or \emph{cdf} for short, which, following Lo\`{e}ve (see~\cite{loeve77}, Chapter III, \S 10), are non-decreasing left-continuous functions $F:[0,1]\to[0,1]$ with $F(0)=0$.

\begin{definition}[Limit permutation]\label{def.perm.lim}
A \emph{limit permutation} is a Lebesgue measurable function $Z:[0,1]^2 \to[0,1]$ satisfying the following conditions:
\begin{itemize}
\item[(a)] for every $x\in[0,1]$, the function $Z(x,\cdot)$ is a cdf, continuous at the point $0$, for which $Z(x,1)=1$;

\item[(b)] for every $y\in[0,1]$, the function $Z(\cdot,y)$ is such that
$$\int_0^1 Z(x,y)\ dx\ =\ y.$$
\end{itemize}
\end{definition}
We observe that the continuity requirements in part (a) are not crucial. As a matter of fact, one could define limit permutations even if the continuity of $Z(x,\cdot)$ at zero and the condition $Z(x,1)=1$ were dropped, since the remaining conditions imply that the set of points $x \in [0,1]$ for which at least one of these two requirements fails to hold must have Lebesgue measure zero. Nevertheless, a few technical aspects can be avoided when these conditions are satisfied, and we therefore assume that they hold.

As with graphs, limit permutations may be used to define a model of random permutations. Here, a sequence of $n$ real numbers $X_1< \cdots< X_n$ is generated uniformly in the simplex $[0,1]_{<}^n$. We then choose a second sequence of random variables $a_1,\ldots,a_n$, independently, with probabilities induced by the cdfs $Z(X_1,\cdot),\ldots,Z(X_n,\cdot)$ associated with the limit permutation $Z$, respectively. The \emph{$Z$-random permutation} $\sigma(n,Z)$ is given by the indices of the real numbers $a_i$ as these are listed in increasing order. For example, if $n=3$ and the generation of the $a_i$ yields $a_2 < a_1 <a_3$, then $\sigma(3,Z)=(2,1,3)$. We shall see that, with probability one, $a_i\not= a_j$ for every $i,j\in[n]$. This new model of random permutation generalizes the classical random permutation model, in which a permutation is selected uniformly at random from all permutation of $n$. Indeed, a classical random permutation may be obtained as a $Z$-random permutation for the limit permutation $Z_u$, where $Z_u(x,y)=y$ for all $(x,y)\in[0,1]^2$.

Again inspired by the graph case, given a limit permutation $Z$, we may define the subpermutation density $t(\tau,Z)$ of a permutation $\tau$ on $[k]$ in $Z$ as the probability that the $Z$-random permutation $\sigma(k,Z)$ is equal to $\tau$. We shall give an alternative, more technical definition of the subpermutation density $t(\tau,Z)$ in Section~\ref{section_convergence} (see Definition~\ref{def.dens.subperm.lim}). We may now state our main result.
\begin{theorem}[Main result]\label{teorema_principal}
Given a convergent permutation sequence $(\sigma_n)_{n \in \mathbb{N}}$ for which $\lim_{n \to \infty}|\sigma_n|=\infty$, there exists a limit permutation $Z:[0,1]^2\to[0,1]$ such that
$$\lim_{n\to\infty}t(\tau,\sigma_n) = t(\tau,Z) \textrm{ for every permutation } \tau.$$
Conversely, every limit permutation $Z:[0,1]^2\to[0,1]$ is a limit of a convergent permutation sequence.
\end{theorem}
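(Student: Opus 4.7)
The plan is to prove the two directions separately, with the forward direction resting on the permutation regularity lemma of Section~\ref{section_regularity} combined with a compactness/diagonal argument, and the converse on a concentration argument for the random permutation model $\sigma(n,Z)$. For the forward direction, I would first associate to each $\sigma_n \in S_n$ a natural step function $Z_{\sigma_n}:[0,1]^2 \to [0,1]$ defined so that, on each strip $[(i-1)/n,i/n) \times [0,1]$, the function $Z_{\sigma_n}$ is the cdf of a point mass located at height $\sigma_n(i)/n$. For each $\epsilon > 0$, apply the permutation regularity lemma to $\sigma_n$ to obtain a partition of $[n]$ into $k=k(\epsilon)$ intervals inside which $\sigma_n$ is $\epsilon$-regular; from this, construct a coarsened step function $Z_n^{(\epsilon)}$ with only $k^2$ rectangular pieces whose subpermutation densities approximate those of $\sigma_n$ with error $o_\epsilon(1)$ uniformly in $n$. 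Since the finitely many coefficients defining $Z_n^{(\epsilon)}$ lie in a compact set and $(\sigma_n)$ is convergent, I would pass to a subsequence along which, for every $\epsilon$ drawn from a countable sequence tending to $0$, the $Z_n^{(\epsilon)}$ stabilize to a step function $Z^{(\epsilon)}$. A diagonal argument then yields a measurable limit $Z:[0,1]^2 \to [0,1]$ for which one can verify conditions (a) and (b) of Definition~\ref{def.perm.lim}, and the convergence $t(\tau,\sigma_n)\to t(\tau,Z)$ follows for every $\tau$ by squeezing both sides between the step-function approximations.

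For the converse, given a limit permutation $Z$, take $\sigma_n$ to be a realization of the $Z$-random permutation $\sigma(n,Z)$. For each fixed $\tau$ of length $k$, I would compute $\E[t(\tau,\sigma(n,Z))]$ by conditioning on the samples $X_1,\ldots,X_n$ and $a_1,\ldots,a_n$, and check via the sampling construction that it converges to $t(\tau,Z)$ as $n \to \infty$. A second-moment / $U$-statistic estimate then yields $\Var(t(\tau,\sigma(n,Z))) = O(1/n)$, so Chebyshev and Borel--Cantelli give almost-sure convergence for each fixed $\tau$; since $\mathcal{S}$ is countable, almost surely all densities converge simultaneously, and any such realization provides the required convergent permutation sequence.

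The main obstacle will be the forward direction, specifically translating the structural guarantees of the permutation regularity lemma into genuine control on subpermutation densities of every $\tau$, while simultaneously verifying that the mass condition (b) survives the iterated limits in $n$ and then in $\epsilon$. A further subtlety is ensuring that the limit $Z$ can be taken so that $Z(x,\cdot)$ is a cdf for \emph{every} $x$ (and continuous at $0$ with $Z(x,1)=1$) rather than merely for almost every $x$; this is precisely the minor modification alluded to in the remark following Definition~\ref{def.perm.lim}, and it must be carried out explicitly to legitimate the pointwise formulation adopted in the paper.
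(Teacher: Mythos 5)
Your overall strategy coincides with the paper's on both sides: the forward direction via the weak regularity lemma, coarsened step functions, compactness of the finitely many block densities and diagonalization, and the converse via $Z$-random permutations. However, there are two concrete gaps. In the converse, the step ``$\Var(t(\tau,\sigma(n,Z)))=O(1/n)$, so Chebyshev and Borel--Cantelli give almost-sure convergence'' fails as stated: Chebyshev only yields $\PP\big(|t(\tau,\sigma(n,Z))-t(\tau,Z)|>\eps\big)=O(1/(n\eps^2))$, and $\sum_n 1/n$ diverges, so the Borel--Cantelli lemma does not apply. You need either an exponentially decaying tail (the paper obtains $2\exp\{-\eps^2 n/2m^2\}$ in Theorem~\ref{rud.teoZPerm} via an exposure martingale and Azuma's inequality), or Borel--Cantelli along the subsequence $n=k^2$ together with a coupling/interpolation argument between consecutive indices, or the strong law for $U$-statistics; as written, the almost-sure claim is unproved.

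In the forward direction, ``a diagonal argument then yields a measurable limit $Z$'' conceals the two hardest steps, which you flag as obstacles but do not resolve. First, to make the stabilized step functions $Z^{(\eps)}$ converge pointwise a.e.\ to a single $Z$ with $Z(x,\cdot)$ a genuine cdf for every $x$, the paper arranges the regular partitions to be nested and the coarse matrices to be consistent under block averaging (condition (i) of Lemma~\ref{lemaLS.5.1}), so that $(Z_{Q_m}(X,Y))_m$ is a bounded martingale and the Martingale Convergence Theorem applies (Lemma~\ref{lemaLS.5.2}); without this consistency the limits $Z^{(\eps)}$ for different $\eps$ need not cohere into one function, and the mass condition (b) must then be verified from property (b) of weighted permutations. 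Second, even granting a.e.\ convergence $Z^{(\eps)}\to Z$, the convergence $t(\tau,Z^{(\eps)})\to t(\tau,Z)$ that your ``squeezing'' requires is not automatic: $t(\tau,\cdot)$ integrates the product of the Lebesgue--Stieltjes measures of the cdfs $Z(x_i,\cdot)$ over the simplex $[0,1]^m_<$, so one needs weak convergence of the cdfs (Helly--Bray) together with the Portmanteau theorem, and for the latter one must check that the boundary of the simplex is null for the limiting product measure --- which in turn requires showing that for almost every pair $(x_1,x_2)$ the cdfs $Z(x_1,\cdot)$ and $Z(x_2,\cdot)$ share no discontinuity (Lemmas~\ref{lema.critico}, \ref{app.portmanteau} and \ref{lem.QtoZ}). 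Until these two ingredients are supplied, the forward direction is a plan rather than a proof.
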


Another important question regarding limits of permutation sequences is a characterization of the set of limit permutations that are limits to the same given convergent permutation sequence $(\sigma_n)_{n \in \mathbb{N}}$. It is clear that this limit is not unique in a strict sense, because, if $Z$ is the limit of a given permutation sequence $(\sigma_n)_{n \in \mathbb{N}}$ and $A$ is a measurable subset of $[0,1]$ with measure zero, then any limit permutation obtained through the replacement of the cdf $Z(x,\cdot)$ by a cdf $Z^{\ast}(x,\cdot)$ for every $x \in A$ is also a limit of $(\sigma_n)_{n \in \mathbb{N}}$, as the value of the integrals will not be affected. A similar situation occurred in the case of graph limits in~\cite{borgs06b}, and uniqueness was captured by an equivalence relation induced by a pseudometric $d_{\square}$ between graphons. Two graphons $W$ and $W^{\ast}$ were proved to be limits of the same graph sequence if and only if $d_{\square}(W,W^{\ast})=0$. We use a similar proof technique for permutations, but more can be achieved: two limit permutations $Z_1$ and $Z_2$ are limits of the same sequence if and only if they are equal almost everywhere, that is, if the set $\{(x,y):Z_1(x,y) \neq Z_2(x,y)\}$
has measure zero.
\begin{theorem}\label{res_princ1} Let $Z_1, Z_2$ be limit permutations.
Let $(\sigma_n)_{n \in \mathbb{N}}$ be a convergent permutation sequence.
Then  $\sigma_n \to Z_1$ and $\sigma_n \to Z_2$  if and only if the set $\{x : Z_1(x,\cdot) \not \equiv Z_2(x,\cdot)\}$
has Lebesgue measure zero.
\end{theorem}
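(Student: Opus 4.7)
The plan is to prove both implications. For the easy direction, assume $N := \{x \in [0,1] : Z_1(x, \cdot) \not\equiv Z_2(x, \cdot)\}$ has Lebesgue measure zero. In the construction of $\sigma(k, Z)$, the order statistics $X_1 < \cdots < X_k$ avoid $N$ with probability one, so on that full-measure event the cdfs $Z_1(X_j, \cdot)$ and $Z_2(X_j, \cdot)$ coincide for every $j$. Therefore $\sigma(k, Z_1)$ and $\sigma(k, Z_2)$ have the same distribution, whence $t(\tau, Z_1) = t(\tau, Z_2)$ for every $\tau$, and by Theorem~\ref{teorema_principal} the conditions $\sigma_n \to Z_1$ and $\sigma_n \to Z_2$ become equivalent (both reduce to $\lim_n t(\tau, \sigma_n) = t(\tau, Z_1)$ for all $\tau$).

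For the converse direction, assume both $\sigma_n \to Z_1$ and $\sigma_n \to Z_2$, so $t(\tau, Z_1) = t(\tau, Z_2)$ for every $\tau$. I would associate with each $Z_i$ the probability measure $\mu_i$ on $[0,1]^2$ with cdf $F_i(x, y) := \int_0^x Z_i(u, y)\, du$; by conditions (a) and (b) of Definition~\ref{def.perm.lim} both marginals of $\mu_i$ are uniform. The key observation is that $\sigma(n, Z_i)$ is distributed as the joint rank pattern of $n$ iid samples $(X_j, a_j)_{j=1}^n$ from $\mu_i$ (sort by the first coordinate, read off the $a$-ranks), so the collection $\{t(\tau, Z_i)\}_{\tau \in S_n}$ fully specifies the joint law of the rank pair $(R_j^X, R_j^a)_{j=1}^n$.

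Introduce the empirical rank cdf
\[
\hat F_n(x, y) := \tfrac{1}{n}\bigl|\bigl\{ j : R_j^X \le \lfloor nx \rfloor,\ R_j^a \le \lfloor ny \rfloor \bigr\}\bigr|,
\]
which is a function of the rank permutation alone, so $\E_{\mu_1}[\hat F_n(x, y)] = \E_{\mu_2}[\hat F_n(x, y)]$ for every $n, x, y$. Writing $\hat F_n(x, y) = F_n^{(i)}\bigl(X_{(\lfloor nx \rfloor)}, a_{(\lfloor ny \rfloor)}\bigr)$ in terms of the bivariate empirical cdf and the univariate order statistics, the Glivenko--Cantelli theorem and the uniformity of the marginals yield $\hat F_n(x, y) \to F_i(x, y)$ almost surely at every continuity point of $F_i$; bounded convergence then forces $F_1(x, y) = F_2(x, y)$ at every common continuity point, and continuity of $F_i(\cdot, y)$ together with left-continuity of $F_i(x, \cdot)$ extends the equality to all of $[0,1]^2$. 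From $F_1 \equiv F_2$, fixing $y$ and applying Lebesgue differentiation gives $Z_1(x, y) = Z_2(x, y)$ for $x$ outside some null set $N_y$. Setting $N := \bigcup_{y \in D} N_y$ for a countable dense $D \subseteq [0,1]$, the left-continuity of $Z_i(x, \cdot)$ together with the boundary values $Z_i(x, 0) = 0$ and $Z_i(x, 1) = 1$ promotes equality at points of $D$ to every $y \in [0,1]$, so $\{x : Z_1(x, \cdot) \not\equiv Z_2(x, \cdot)\} \subseteq N$ has measure zero.

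I expect the main technical obstacle to be the Glivenko--Cantelli step: one must control $\hat F_n(x, y) - F_i(x, y)$ via the joint convergence of the bivariate empirical cdf and the univariate order statistics $X_{(\lfloor nx \rfloor)} \to x$, and verify that the common continuity points of $F_1$ and $F_2$ form a dense enough set to propagate the equality globally through the left-continuity of the cdfs. Everything else — the reduction to $\mu_i$, the Lebesgue differentiation step, and the dense-set extension — is routine measure theory once that convergence is in place.
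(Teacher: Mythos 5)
Your proof is correct, but it takes a genuinely different route from the paper's. The paper proves Theorem~\ref{res_princ1} through the rectangular pseudometric $d_{\square}$ on limit permutations: it first shows that $d_{\square}(Z_1,Z_2)=0$ if and only if $Z_1=Z_2$ almost everywhere (via Fubini for null sets), then establishes that two limit permutations are limits of a common sequence if and only if $d_{\square}(Z_1,Z_2)=0$. The hard half of the latter hinges on Lemma~\ref{lema.volta2}, which bounds $d_{\square}(Z_1,Z_2)$ by coupling $\sigma(m,Z_1)$ with $\sigma(m,Z_2)$ using the equality of subpermutation densities, and relies on Lemma~\ref{lema.amostra} (concentration of $d_{\square}(Z,\sigma(n,Z))$ via an Azuma martingale and Borel--Cantelli) to control the distance from each $Z_i$ to its own sample. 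You bypass the $d_{\square}$ apparatus entirely: you attach to each $Z_i$ the measure $\mu_i$ on $[0,1]^2$ with cdf $F_i(x,y)=\int_0^x Z_i(u,y)\,du$, observe that $\sigma(n,Z_i)$ is exactly the rank pattern of $n$ iid draws from $\mu_i$, and then recover $F_i$ from the rank patterns via bivariate Glivenko--Cantelli together with the a.s.\ convergence of the univariate order statistics $X_{(\lfloor nx\rfloor)}\to x$ (this last fact using the uniformity of the marginals guaranteed by Definition~\ref{def.perm.lim}). Since the subpermutation densities determine the law of the rank pattern, and the empirical rank cdf $\hat F_n$ is a function of the rank pattern alone, bounded convergence gives $F_1\equiv F_2$; Lebesgue differentiation in the first coordinate on a countable dense set of $y$-values, propagated through the left-continuity of $Z_i(x,\cdot)$ and the boundary conditions, then yields $Z_1(x,\cdot)\equiv Z_2(x,\cdot)$ for a.e.\ $x$. (A small remark: because $F_i$ has uniform marginals, the standard Lipschitz estimate $|F_i(x',y')-F_i(x,y)|\le |x'-x|+|y'-y|$ shows $F_i$ is jointly continuous, so the restriction to ``common continuity points'' in your argument is automatically all of $[0,1]^2$.) Your route is more self-contained for this single theorem and exposes the probabilistic structure cleanly, at the cost of invoking the multivariate Glivenko--Cantelli theorem; the paper's $d_{\square}$-based machinery is heavier here but is reused elsewhere (Theorem~\ref{teorema_equiv} and the compact metric space $(\mathcal{Z}/_\sim, d_{\square})$), and along the way it establishes the stronger fact, not delivered by your argument, that $\sigma_n\to Z$ is equivalent to $d_{\square}(Z_{\sigma_n},Z)\to 0$.
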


On the other hand,  based on a previous concept by Cooper~\cite{cooper1},
 we may introduce a distance $d_{\square}$ between permutations (see \eqref{def.rec}) and,
 more generally, between limit permutations, which is a permutation counterpart of the graph pseudometric discussed in the previous paragraph. In particular, we may characterize our notion of convergence of permutation sequences in terms of this metric. As usual, a sequence $(\sigma_n)_{n \in \mathbb{N}}$ is 
said to be a \emph{Cauchy sequence} with respect to the metric $d_{\square}$ if, for every $\eps>0$, 
there exits $n_0=n_0(\eps)$ such that $d_{\square}(\sigma_n,\sigma_m)<\eps$ for every $n,m \geq n_0$.

\begin{theorem}\label{teorema_equiv}
A permutation sequence $(\sigma_n)_{n \in \mathbb{N}}$ converges if and only if it is a Cauchy sequence with respect to the metric $d_{\square}$.
\end{theorem}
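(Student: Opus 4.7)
The plan is to prove the two implications separately. The ``Cauchy in $d_{\square}$ implies convergent'' direction follows from a permutation counting lemma, while the converse uses a regularity-based argument. Both steps mirror the graphon strategy of Borgs~\etal~\cite{borgs06b} but must be adapted to the asymmetric, cdf-valued limit permutations of Definition~\ref{def.perm.lim}.

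For the easier direction I would first establish a counting lemma: for every permutation $\tau$ of length $k$ and every pair of permutations $\sigma,\pi$,
$$|t(\tau,\sigma)-t(\tau,\pi)| \;\le\; C_k\cdot d_{\square}(\sigma,\pi),$$
for a constant $C_k$ depending only on $k$. The proof should follow the usual telescoping: rewrite $t(\tau,\cdot)$ as an integral on $[0,1]^{2k}$ of an indicator of the relative-order condition prescribed by $\tau$ (via the formulation underlying Definition~\ref{def.dens.subperm.lim}), and replace the contribution of $\sigma$ by that of $\pi$ one coordinate at a time, paying only a factor of $d_{\square}(\sigma,\pi)$ per step. With this in hand, if $(\sigma_n)$ is $d_{\square}$-Cauchy then $(t(\tau,\sigma_n))$ is Cauchy in $\mathbb{R}$ for every fixed $\tau$, hence convergent, so $(\sigma_n)$ is convergent in the sense of Definition~\ref{def_conv}.

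For the harder direction, suppose $(\sigma_n)$ converges in the sense of Definition~\ref{def_conv}. By Theorem~\ref{teorema_principal} there is a limit permutation $Z$ with $t(\tau,\sigma_n)\to t(\tau,Z)$ for every $\tau$, and by the triangle inequality it suffices to show $d_{\square}(\sigma_n,Z)\to 0$. For each $\eps>0$ I would invoke the permutation regularity lemma of Section~\ref{section_regularity} to $d_{\square}$-approximate both $\sigma_n$ (for $n$ large) and $Z$ by a common step limit permutation whose block data is essentially determined, up to error tending to $0$ with $n$, by the finite family $\{t(\tau,\cdot):|\tau|\le k(\eps)\}$; since these densities agree in the limit, so do the step approximations, and letting $\eps\to 0$ yields $d_{\square}(\sigma_n,Z)\to 0$.

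The main obstacle is precisely this inverse passage, namely controlling $d_{\square}$ by the densities of a bounded family of short permutations. This is the permutation counterpart of the combination of Szemer\'edi regularity and sampling concentration used in the graphon setting, and will require concentration bounds for the $Z$-random permutation model $\sigma(n,Z)$ in order to certify that the discrete objects $\sigma_n$ are indeed close, in $d_{\square}$, to the continuous step approximation produced from $Z$. Once this inverse step is carried out, the rest of the argument is an $\eps/3$-chase using the counting lemma, the triangle inequality, and the convergence of densities.
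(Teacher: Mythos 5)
Your first direction is sound and is exactly how the paper handles it: the counting lemma you want is Lemma~\ref{lema.cut.subpermut}, which gives $|t(\tau,\sigma)-t(\tau,\pi)|\le 2|\tau|^2\, d_{\square}(\sigma,\pi)$, so a $d_{\square}$-Cauchy sequence has Cauchy, hence convergent, density sequences. (A minor point to attend to: that lemma is stated for permutations of equal length, so for $|\sigma_n|\ne|\sigma_m|$ you must pass through the step functions $Z_{\sigma_n}$ and use Lemma~\ref{lema_auxiliar} to compare $t(\tau,\sigma)$ with $t(\tau,Z_\sigma)$, at a cost of $O(1/|\sigma_n|)$.)

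The gap is in the converse. You correctly reduce it to showing $d_{\square}(Z_{\sigma_n},Z)\to 0$, which is part (ii) of Theorem~\ref{cor.ida.e.volta}, but the mechanism you propose --- a weak regular partition ``whose block data is essentially determined by the finite family $\{t(\tau,\cdot):|\tau|\le k(\eps)\}$'' --- is not something Lemma~\ref{prop_weak} delivers (a partition matrix is a function of the permutation, not of its short subpermutation densities), and you then explicitly defer the ``inverse passage'' rather than prove it; that passage is the entire content of the hard direction. The paper's route avoids regularity altogether here and goes through sampling: (a) Lemma~\ref{lema.amostra} shows $d_{\square}(Z,\sigma(n,Z))\to 0$ almost surely, via Azuma's inequality applied to the number of sample points landing in each rectangle of a $1/n$-grid, followed by a union bound and Borel--Cantelli; (b) Lemma~\ref{cor.amostra2} shows a long permutation is $d_{\square}$-close to a uniformly random length-$m$ subpermutation of itself; and (c) the densities enter only through a coupling: the distributions of $\sub(m,\sigma_n)$ and of $\sigma(m,Z)$ have total variation distance at most $\sum_{\tau\in S_m}|t(\tau,\sigma_n)-t(\tau,Z)|$, which is small for $n$ large, so the two samples can be coupled to coincide with probability $1-\eps/3$. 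Chaining (a), (b), (c) by the triangle inequality (Proposition~\ref{prop_limit2} together with Lemma~\ref{lema.amostra}) yields $d_{\square}(Z_{\sigma_n},Z)\to 0$, hence the Cauchy property. Until your regularity step is replaced by this sampling-and-coupling argument, or by a genuine inverse counting lemma that you would have to prove separately, the harder implication is not established.
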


Also in analogy with the work for graphs by Lov\'{a}sz and Szegedy, the theory
 in this paper can be considered in terms of the discrete metric space $(\mathcal{S},d_{\square})$,
 where $\mathcal{S}=\bigcup_{i=1}^\infty S_n$ is the set of all finite permutations and $d_{\square}$ is
 the metric of the previous paragraph. By a standard diagonalization argument, every permutation 
sequence can be shown to have a convergent subsequence (see Lemma~\ref{conv_subseq}). 
As a consequence, the metric space $(\mathcal{S},d_{\square})$ can be enlarged to a compact metric space $(\mathcal{Z}/_{\sim},d_{\square})$ by adding limit permutations, where we identify limit permutations that are equal almost everywhere. By Theorem~\ref{teorema_principal} the subspace of permutations is dense in $\mathcal{Z}/_{\sim}$; moreover, it is discrete, since a sequence cannot converge to a permutation without being eventually constant (Claim \ref{claim_eventually_constant}).
 Finally, Theorem~\ref{teorema_equiv} tells us that, when restricted to permutations, convergence
 in this metric space coincides with the concept of convergence in Definition~\ref{def_conv}.

 We have found two essentially different paths for establishing the
 main results in this paper. The approach followed here starkly
 resembles the work of Lov\'{a}sz and Szegedy~\cite{lovasz06} for
 graph sequences, which relies on Szemer\'{e}di-type regularity
 arguments. However, several difficulties of technical nature arise,
 as the limit objects here are more constrained than in the graph
 case. An alternative approach, taken in~\cite{balazs}, has a
 distinctive probabilistic flavor and has the advantage of being both
 more compact and more direct, since several of the technicalities of
 the current approach can be avoided.

This paper is structured as follows. Section~\ref{section_regularity} contains preliminary results and definitions. For instance, we introduce a distance $d_{\square}$ between permutations, based on a previous notion by Cooper~\cite{cooper1}, which is an important tool in our proofs. We also prove some basic facts about the convergence of permutation sequences. In Section~\ref{section_convergence}, we provide the proof that every convergent permutation sequence has a limit of the prescribed form. However, several technical aspects of this proof, which require some basic measure-theoretical results, are postponed to Section~\ref{section_technical}. Section~\ref{sec_random} deals with $Z$-random permutations, which allow us to demonstrate that any limit permutation is the limit of a convergent permutation sequence. Finally, Section~\ref{sec_uniqueness} is devoted to the proof of uniqueness, in the form stated in Theorem~\ref{res_princ1}.  

\section{Preliminaries}\label{section_regularity}

The present section deals with the concept of rectangular distance for permutations, which plays an important r\^{o}le in our proofs. Moreover, we establish some basic facts about the convergence of permutation sequences.

A fundamental tool in the work of Lov\'{a}sz and Szegedy~\cite{lovasz06} is a weaker version, due to Frieze and Kannan \cite{frieze_kannan}, of the powerful Regularity Lemma introduced by Szemer\'{e}di~\cite{szem76}. A similar framework can be developped for permutations. Following the work of Cooper~\cite{cooper2}, we first encode permutations as graphs.
\begin{definition}[Graph of a permutation]
Given a permutation $\sigma:[n] \rightarrow [n]$, the \emph{graph} $G_{\sigma}$ of $\sigma$ is the bipartite graph  with disjoint copies of $[n]$ as color classes $A$ and $B$, where $\{a,b\}$, with $a \in A$ and $b \in B$, is an edge if and only if $\sigma(a)<b$.  
\end{definition}

Let $I[n]$ be the set of all \emph{intervals} in $[n]$, that is, the set of all subsets of the form $\{x \in [n]~:~ a \leq x < b\}$, where $a,b \in [n+1]$ are called the \emph{endpoints} of the interval. Given a permutation $\sigma:[n] \to [n]$, Cooper~\cite{cooper2} defines the \emph{discrepancy of $\sigma$} as
$$D(\sigma)=\max_{S,T\in I[n]}  \left||\sigma(S)\cap T|-\frac{|S||T|}{n}\right|.$$
This is used to measure the ``randomness'' of a permutation. Indeed, sequences with low discrepancy, i.e., for which $D(\sigma)=o(n)$, are said to be \emph{quasi-random}. We use a normalized version of the same concept to introduce a distance between permutations.
\begin{definition}[Rectangular distance]\label{def.rec}
Given permutations $\sigma_1,\sigma_2:[n] \rightarrow [n]$, the \emph{rectangular distance} between $\sigma_1$ and $\sigma_2$ is given by
\[
  d_{\square}(\sigma_1,\sigma_2)\ =\ \frac{1}{n}\max_{S,T\in I[n]}\ \Big||\sigma_1(S)\cap T|-|\sigma_2(S)\cap T|\Big|.
\]
\end{definition}

As mentioned in the introduction, we wish to relate a convergent permutation sequence with a ``limit object", namely a bivariate function with range $[0,1]^2$ satisfying some special properties. This puts in evidence the need of relating the latter with a sequence of finite, dicrete objects. A first step in this transition is the concept of \emph{weighted permutation}, in the spirit of the weighted graphs introduced in~\cite{lovasz06}.

\begin{definition}[Weighted permutation]
Given an integer $k$, a \emph{weighted permutation} is a matrix $Q:[k]^2 \to [0,1]$ with the following two properties:
\begin{itemize}

\item[(a)] for every $i \in [k]$ and $j \leq j' \in [k]$, we have $Q(i,j) \leq Q(i,j')$;

\item[(b)] for every $j \in [k]$, we have
$$j-1 \leq \sum_{i=1}^k Q(i,j) \leq j.$$
\end{itemize}
\end{definition}

An important class of weighted permutations is given as follows. It is based on equitable partitions of a permutation, which we now define.

\begin{definition}
Let $n>0$ be an integer. A \emph{$k$-partition $P=(C_i)_{i=1}^k$ of $[n]$} is a partition of $[n]$ into $k$ consecutive intervals $C_1,\ldots,C_k$. A partition $P$ is said to be \emph{equitable} if $\displaystyle{\left||C_i|-|C_j|\right| \leq 1}$ for every $i,j \in [k]$.
\end{definition}

Let $\sigma:[n] \to [n]$ be a permutation and let $P=(C_i)_{i=1}^k$ be an equitable $k$-partition of $[n]$. The \emph{partition matrix} of $\sigma$ induced by $P$ is the matrix $Q_{\sigma,P}:[k]^2 \rightarrow [0,1]$ for which, given $u,w \in [k]$,
$$Q_{\sigma,P}(u,w)=\frac{e_{\sigma}(C_u,C_w)}{|C_u||C_w|},$$
where $e_{\sigma}(C_u,C_w)$ is the number of edges $\{a,b\}$ in the graph $G_\sigma$ with $a \in C_u \subset A$ and $b \in C_w \subset B$. If each interval in $P$ contains a single element, the partition matrix $Q_{\sigma,P}$ is the bipartite adjacency matrix of $G_{\sigma}$, which is denoted by $Q_\sigma$ and is itself a weighted permutation.

The following result relates partition matrices and weighted permutations. As with the other results in this section, the proof is easy and is omitted. It has originally appeared in~\cite{rudini08a}, and a complete proof may be found in~\cite{rudini08b}. For convenience, the proofs are also given in Appendix~\ref{apend_A}.
\begin{lemma}\label{prop_weighted}
If $\sigma:[n] \to [n]$ is a permutation and $P=(C_i)_{i=1}^k$ is an equitable $k$-partition of $[n]$, where $n>4k^2$, then the partition matrix $Q_{\sigma,P}$ is a weighted permutation.
\end{lemma}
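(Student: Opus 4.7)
The plan is to check the two defining properties of a weighted permutation directly from the formula $Q_{\sigma,P}(u,w)=e_\sigma(C_u,C_w)/(|C_u||C_w|)$, using a pointwise monotonicity argument for~(a) and a double-counting identity based on the bijectivity of $\sigma$ for~(b). The hypothesis $n>4k^2$ is only needed in~(b), where it controls the boundary losses produced by the fact that equitable intervals may have either $\lfloor n/k\rfloor$ or $\lceil n/k\rceil$ elements.

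For~(a), I would fix a row index $i$ and write
\[
Q_{\sigma,P}(i,j)\ =\ \frac{1}{|C_i|}\sum_{a\in C_i}f_j(a),\qquad\text{where}\qquad f_j(a)\ =\ \frac{|\{b\in C_j:\sigma(a)<b\}|}{|C_j|}.
\]
It then suffices to verify $f_j(a)\leq f_{j+1}(a)$ for every $a\in[n]$, since averaging over $a\in C_i$ yields $Q_{\sigma,P}(i,j)\leq Q_{\sigma,P}(i,j+1)$, and iterating gives~(a). The pointwise inequality is a short case analysis on the position of $\sigma(a)$ relative to the consecutive intervals $C_j$ and $C_{j+1}$: if $\sigma(a)$ lies strictly below $C_j$ both fractions equal~$1$; if $\sigma(a)$ lies strictly above $C_{j+1}$ both vanish; if $\sigma(a)\in C_j$ then $f_j(a)<1=f_{j+1}(a)$; and if $\sigma(a)\in C_{j+1}$ then $0=f_j(a)\leq f_{j+1}(a)$. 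Since $C_j$ and $C_{j+1}$ are consecutive, these four cases exhaust all possibilities.

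For~(b), I would exploit the fact that $\sigma$ is a bijection of $[n]$ to evaluate the column edge count:
\[
\sum_{i=1}^k e_\sigma(C_i,C_j)\ =\ \bigl|\{(a,b)\in[n]\times C_j:\sigma(a)<b\}\bigr|\ =\ \sum_{b\in C_j}(b-1).
\]
Writing $C_j=\{s_j,s_j+1,\dots,s_{j+1}-1\}$, the right-hand side equals $\tfrac12(s_j+s_{j+1}-3)|C_j|$. Equitability forces $|C_i|\in\{q,q+1\}$ with $q=\lfloor n/k\rfloor$, and hence $(j-1)q+1\leq s_j\leq (j-1)(q+1)+1$ and $jq+1\leq s_{j+1}\leq j(q+1)+1$. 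Plugging these bounds into
\[
\sum_{i=1}^k Q_{\sigma,P}(i,j)\ =\ \frac{1}{|C_j|}\sum_{i=1}^k\frac{e_\sigma(C_i,C_j)}{|C_i|}
\]
together with the trivial estimate $|C_i|\in\{q,q+1\}$, the required inequalities $j-1\leq\sum_i Q_{\sigma,P}(i,j)\leq j$ both reduce to an arithmetic condition of the form $q\geq 2j-1$. Since $j\leq k$ and $n>4k^2$ implies $q\geq 4k\geq 2k-1\geq 2j-1$, both inequalities hold with room to spare.

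I expect~(a) to be purely mechanical, and~(b) to require nothing more than careful bookkeeping to reconcile three independent small losses: the spread between $q$ and $q+1$ in $|C_i|$, the deviation of $s_j$ from the ideal location $(j-1)n/k+1$, and the constant $-3$ appearing in the identity above. The only point genuinely worth checking is that the quadratic lower bound $n>4k^2$ is strong enough to push both column sums strictly across the thresholds $j-1$ and $j$, and as the preceding estimate shows it is comfortably so; I foresee no deeper obstacle.
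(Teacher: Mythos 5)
Your proof is correct, and it is essentially the same as the paper's: part~(a) is the same monotonicity observation (the paper calls it "immediate"; you simply spell out the pointwise case analysis), and for part~(b) you use exactly the same identity $\sum_i e_\sigma(C_i,C_j)=\sum_{b\in C_j}(b-1)=\tfrac12(s_j+s_{j+1}-3)|C_j|$, the same bounds $|C_i|\in\{\lfloor n/k\rfloor,\lceil n/k\rceil\}$ and $s_j\in[(j-1)q+1,(j-1)(q+1)+1]$, and the same role for $n>4k^2$, only packaged as the tidy sufficient condition $q\geq 2j-1$. (Your bookkeeping checks out: the lower bound $\sum_iQ_{\sigma,P}(i,j)\geq j-1$ reduces exactly to $q\geq 2j-1$ and the upper bound to $q\geq 2j-2$, both comfortably implied by $q\geq 4k$.)
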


We wish to extend the definition of rectangular distance between permutations to a distance between weighted permutations. Note that, for permutations $\sigma_1$ and $\sigma_2$ on $[n]$, and intervals $S$ and $T$ of $[n]$, where $T=\{x \in [n]~:~a \leq x <b\}$, we have
\begin{equation*}
\begin{split}
|\sigma_1(S) \cap T| - |\sigma_2(S) \cap T|&= \sum_{x \in S} \left( |\sigma_1(x) \cap T| - |\sigma_2(x) \cap T| \right) \\
&=  \sum_{x \in S} \Big(\left(Q_{\sigma_1}(x,b) - Q_{\sigma_1}(x,a) \right)-\left( Q_{\sigma_2}(x,b) - Q_{\sigma_2}(x,a)\right) \Big).
\end{split}
\end{equation*}
Therefore, the following extension is natural. To simplify notation, for a weighted permutation $Q:[k]^2\to [0,1]$, we henceforward assume that $Q(i,0)=0$ and $Q(i,k+1)=1$ for every $i \in [k]$.
\begin{definition}\label{def.dist.pond}
Given weighted permutations $Q_1,Q_2:[k]^2\to[0,1]$,
the \emph{rectangular distance} between $Q_1$ and $Q_2$ is given by
\[
  d_{\square}(Q_1,Q_2)\ =\ \frac{1}{k}\max_{\substack{S\in I[k]\\a<b\in[k+1]}}\
  \Big|\sum_{x\in S}\Big((Q_1(x,b)-Q_1(x,a))-(Q_2(x,b)-Q_2(x,a))\Big)\Big|.
\]
\end{definition}

For $n>0$ and an equitable $k$-partition $P=(C_i)_{i=1}^k$ of $[n]$, and given a matrix $Q:[k]^2\to[0,1]$, we let the \emph{blow-up matrix} $\mathcal{K}(P,Q):[n]^2\to[0,1]$ be the matrix obtained by replacing a single entry $(i,j)$ of $Q$ by a block of size $|C_i||C_j|$ assuming the same value. In other words,
$$\mathcal{K}(x,y)=Q(i,j) \textrm{ for every }x,y\in[n] \textrm{ with }x\in C_i \textrm{ and } y\in C_j.$$
With this definition, we may derive a partitioning $P$ of a permutation that, in some sense, resembles the weakly regular partitioning introduced for graphs by Frieze and Kannan~\cite{frieze_kannan}. The key property of such a partition $P$ is that, with respect to the rectangular distance, the bipartite adjancency matrix of the graph of the permutation can be well-approximated by the blow-up matrix of the partition matrix with respect to $P$. We point out, however, that unlike in the graph case and due to our restriction to considering subintervals of $[n]$ as opposed to more general subsets, this partitioning does not convey any particular structural information of the partitioned permutation. In spite of this, it still provides us with a useful encoding of permutations.
\begin{lemma}\label{prop_weak}
Given $\eps>0$, there exists $k_0>0$ such that, for every $k>k_0$ and every $n>2k$, the following property holds. If $P$ is an equitable $k$-partition of $[n]$ and $\sigma:[n] \to [n]$ is a permutation, then  
\[
  d_{\square}(Q_{\sigma},\mathcal{K}(P,Q_{\sigma,P}))\leq\varepsilon.
\]
\end{lemma}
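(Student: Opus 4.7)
My plan is to fix arbitrary $S\in I[n]$ and $a<b\in[n+1]$ and bound
$$\Phi := \left|\sum_{x\in S}\bigl((Q_\sigma-\mathcal{K})(x,b)-(Q_\sigma-\mathcal{K})(x,a)\bigr)\right|$$
by $O(n/k)$, where $\mathcal{K}:=\mathcal{K}(P,Q_{\sigma,P})$; dividing by $n$ then yields $d_\square(Q_\sigma,\mathcal{K})=O(1/k)$, which is at most $\eps$ once $k$ is large enough in terms of $\eps$. The guiding observation is that $\mathcal{K}$ is block-constant in each variable, so its contribution is completely determined by how $S$ and $[a,b)$ sit relative to the partition $P$; I would reduce to the case where both are unions of full blocks of $P$, paying only boundary errors of size $O(n/k)$.

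The reduction proceeds in two steps. First, replace $a$ and $b$ by the first elements $a',b'$ of the blocks $C_{j_a},C_{j_b}$ containing them. Since $\mathcal{K}(x,\cdot)$ is constant on each $C_w$, the $\mathcal{K}$-contribution is unaffected, whereas the $Q_\sigma$-contribution changes by at most $|\sigma(S)\cap[a',a)|+|\sigma(S)\cap[b',b)|\le 2\lceil n/k\rceil$. Second, split $S=S_{\mathrm{in}}\cup S_{\mathrm{bd}}$, where $S_{\mathrm{in}}=\bigcup_{u\in I_S}C_u$ is the union of the blocks of $P$ fully contained in $S$ and $S_{\mathrm{bd}}$ consists of at most the two partial blocks at its ends, of total size at most $2\lceil n/k\rceil$. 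The trivial bound $|(Q_\sigma-\mathcal{K})(x,b')-(Q_\sigma-\mathcal{K})(x,a')|\le 2$ then bounds the $S_{\mathrm{bd}}$-contribution by $4\lceil n/k\rceil$.

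The heart of the argument is the fully aligned case. Here the $Q_\sigma$-sum is exactly $|\{x\in S_{\mathrm{in}}:a'\le\sigma(x)<b'\}|$, while the $\mathcal{K}$-sum equals $\sum_{u\in I_S}\bigl(e_\sigma(C_u,C_{j_b})/|C_{j_b}|-e_\sigma(C_u,C_{j_a})/|C_{j_a}|\bigr)$. The key estimate---and the main technical obstacle---is that for any $u,w$, the ratio $e_\sigma(C_u,C_w)/|C_w|$ approximates the exact preimage count $|\{x\in C_u:\sigma(x)<\min C_w\}|$ to within $|\{x\in C_u:\sigma(x)\in C_w\}|$, because an $x$ whose image $\sigma(x)$ is the $c$th element of $C_w$ contributes only $|C_w|-c\in[0,|C_w|]$ to $e_\sigma(C_u,C_w)$, while an $x$ with $\sigma(x)<\min C_w$ contributes the full $|C_w|$. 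Summing over $u\in I_S$ and using $\sum_u|\{x\in C_u:\sigma(x)\in C_w\}|\le|C_w|$, the aligned discrepancy is at most $|C_{j_a}|+|C_{j_b}|\le 2\lceil n/k\rceil$. Collecting everything yields $\Phi\le 8\lceil n/k\rceil$, hence $d_\square(Q_\sigma,\mathcal{K})\le 12/k$ for $n>2k$, so any $k_0\ge 12/\eps$ suffices. The difficulty is precisely that $Q_{\sigma,P}$ records edge densities rather than preimage counts, requiring careful tracking of this partial mass inside the destination block; everything else is routine alignment bookkeeping.
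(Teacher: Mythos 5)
Your proof is correct. The reductions are sound: replacing $a,b$ by the left endpoints of their blocks leaves the $\mathcal{K}$-sum untouched (block-constancy in the second variable) and perturbs the $Q_\sigma$-sum by at most $2\lceil n/k\rceil$; the two partial blocks of $S$ cost at most $4\lceil n/k\rceil$; and your key estimate is right, since an $x$ with $\sigma(x)$ equal to the $c$th element of $C_w$ contributes $|C_w|-c$ to $e_\sigma(C_u,C_w)$ rather than the full $|C_w|$, and bijectivity of $\sigma$ gives $\sum_{u}|\sigma(C_u)\cap C_w|\le |C_w|$, so the aligned discrepancy is at most $|C_{j_a}|+|C_{j_b}|$. (The only loose end is $b=n+1$, where there is no containing block; but the convention $Q(\cdot,n+1)=1$ makes that term vanish identically, so the argument only simplifies.)

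Your route differs from the paper's in a way worth noting. The paper writes both sums in terms of edge \emph{densities} $d(S,\{a\})$ and $d(S,C_\alpha)$ in $G_\sigma$ and bounds $|d(S,\{a\})-d(S,C_\alpha)|\le |C_\alpha|/|S|$; since this error is normalized by $|S|$, the paper must split into the cases $|S|\le\eps n$ (handled trivially) and $|S|>\eps n$, and ends up requiring $k\ge 8/\eps^2$. You instead keep everything as raw \emph{counts}, distribute the error block by block, and sum it using bijectivity, which yields the uniform additive bound $\Phi\le 8\lceil n/k\rceil$ with no case analysis on $|S|$ and the sharper threshold $k_0=O(1/\eps)$ rather than $O(1/\eps^2)$. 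The underlying observation --- that the only discrepancy between the adjacency matrix and its block average comes from preimages landing strictly inside the destination interval --- is the same in both proofs; your bookkeeping simply extracts more from it.
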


We say that one such partition $P$ is a \emph{weak $\eps$-regular partition} of $\sigma$.

The concept of subpermutation density in a permutation can also be easily extended to weighted permutations.
\begin{definition}[Subpermutation density in a weighted permutation]
\label{def.dens.perm2}
Given a weighted permutation $Q:[k]^2\to[0,1]$
and a permutation $\tau:[m]\to[m]$, $m<k$, the \emph{subpermutation density} of $\tau$ in $Q$ is given by
\[
  t(\tau,Q)=\binom{k}{m}^{-1}\sum_{X\in[k]^m_<}\sum_{A\in[k+1]^m_<}
  \prod_{i=1}^m\Big(Q(x_i,a_{\tau(i)})-Q(x_i,a_{\tau(i)}-1)\Big),
\]
where we use the notation $X=(x_1,\ldots,x_m)$ and $A=(a_1,\ldots,a_m)$.
\end{definition}

It is not hard to see that, for $n>m$ and a permutation $\sigma:[n]\to[n]$, we have $t(\tau,\sigma)=t(\tau,Q_{\sigma})$.
The next result evinces the relationship between subpermutation density and rectangular distance.

\begin{lemma}\label{lema.cut.subpermut}
Let $\tau$ be a permutation and let $n$ be a positive integer with $n \geq 2|\tau|$. Then, given weighted permutations $Q_1,Q_2:[n]^2 \rightarrow [0,1]$, we have
$$|t(\tau,Q_1)-t(\tau,Q_2)| \leq 2|\tau|^2\cdot d_{\square}(Q_1,Q_2).$$
In particular, if $\sigma_1$ and $\sigma_2$ are permutations on $[n]$, we have
$|t(\tau,\sigma_1)-t(\tau,\sigma_2)| \leq 2|\tau|^2 d_{\square}(\sigma_1,\sigma_2)$.
\end{lemma}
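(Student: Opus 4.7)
The plan is a hybrid argument, swapping $Q_1$ for $Q_2$ one factor at a time in the product that defines $t(\tau,\cdot)$. Set $m=|\tau|$ and
$$f_c^i \;:=\; Q_c(x_i,a_{\tau(i)})-Q_c(x_i,a_{\tau(i)}-1),$$
which by the monotonicity of a weighted permutation lies in $[0,1]$. For $\ell\in\{0,1,\ldots,m\}$ introduce the hybrid
$$H_\ell \;=\; \binom{n}{m}^{-1}\sum_{X\in[n]^m_<,\,A\in[n+1]^m_<}\ \prod_{i\le\ell}f_2^i\,\prod_{i>\ell}f_1^i,$$
so that $H_0=t(\tau,Q_1)$ and $H_m=t(\tau,Q_2)$. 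The triangle inequality reduces the task to proving $|H_{\ell-1}-H_\ell|\le 2m\cdot d_\square(Q_1,Q_2)$ for each $\ell$, since then summing over the $m$ steps gives $2m^2\cdot d_\square(Q_1,Q_2)$.

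For a fixed $\ell$, only the $\ell$-th factor differs between $H_{\ell-1}$ and $H_\ell$, so I would pull the sum over the two ``active'' coordinates $x_\ell$ and $a_{\tau(\ell)}$ inside, treating the remaining product $\prod_{i<\ell}f_2^i\prod_{i>\ell}f_1^i$ as a nonnegative weight depending only on $X_{-\ell}\in[n]^{m-1}_<$ and $A_{-\tau(\ell)}\in[n+1]^{m-1}_<$. The free variable $x_\ell$ then ranges over an interval $S_\ell\subseteq[n]$ and $a_{\tau(\ell)}$ over a contiguous interval $T_\ell\subseteq[n+1]$. Telescoping the $a_{\tau(\ell)}$-sum collapses the inner piece to
$$\sum_{x_\ell\in S_\ell}\Big\{\big[Q_1(x_\ell,a^+)-Q_1(x_\ell,a^-)\big]-\big[Q_2(x_\ell,a^+)-Q_2(x_\ell,a^-)\big]\Big\},$$
where $a^-=\min T_\ell-1$ and $a^+=\max T_\ell$. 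When $a^-\ge 1$, Definition~\ref{def.dist.pond} bounds this by $n\cdot d_\square(Q_1,Q_2)$ directly. The delicate situation is $\tau(\ell)=1$ (the only way to force $a^-=0$); there I use the conventions $Q_c(\cdot,0)=0$ and $Q_c(\cdot,n+1)=1$ to rewrite the inner sum as $-\sum_{x_\ell\in S_\ell}\{[Q_1(x_\ell,n+1)-Q_1(x_\ell,a^+)]-[Q_2(x_\ell,n+1)-Q_2(x_\ell,a^+)]\}$, which is controlled by $n\cdot d_\square$ using the admissible pair $(a^+,n+1)$.

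To bound the outer sum, I would use the probabilistic fact that for any weighted permutation $Q$ and any fixed $x$, the sequence $(Q(x,a)-Q(x,a-1))_{a\in[n+1]}$ is a probability distribution on $[n+1]$, since its entries are nonnegative and sum to $Q(x,n+1)-Q(x,0)=1$. Hence for each fixed $X_{-\ell}$, $\sum_{A_{-\tau(\ell)}}\prod_{i<\ell}f_2^i\prod_{i>\ell}f_1^i$ is the probability that $m-1$ independent samples drawn from the corresponding row distributions fall in a specific strict linear order, and so is at most $1$. Summing over $X_{-\ell}\in[n]^{m-1}_<$ yields at most $\binom{n}{m-1}$, and combining with the inner bound gives
$$|H_{\ell-1}-H_\ell|\ \le\ \binom{n}{m}^{-1}\binom{n}{m-1}\cdot n\,d_\square(Q_1,Q_2)\ =\ \frac{nm}{n-m+1}\,d_\square(Q_1,Q_2)\ \le\ 2m\,d_\square(Q_1,Q_2),$$
the last step using $n\ge 2m$. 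The ``in particular'' clause is immediate from $t(\tau,\sigma)=t(\tau,Q_\sigma)$ and $d_\square(\sigma_1,\sigma_2)=d_\square(Q_{\sigma_1},Q_{\sigma_2})$. The main obstacle is the boundary bookkeeping: since $d_\square$ only permits $1\le a<b\le n+1$, whenever telescoping produces an endpoint at $0$ one must invoke the weighted-permutation conventions at $0$ and $n+1$ to return to an admissible pair without losing a constant factor.
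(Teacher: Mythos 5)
Your proof is correct and follows the same hybrid/telescoping strategy as the paper's proof in Appendix~A: exchange $Q_1$ for $Q_2$ one factor at a time, telescope the sum over $a_{\tau(\ell)}$ into a single rectangular-distance increment bounded by $n\,d_\square(Q_1,Q_2)$, and bound the residual sum over the remaining $A$-coordinates by $1$, leading to $\frac{nm}{n-m+1}\leq 2m$ per swap. Your explicit treatment of the boundary case $a^-=0$ (forced only by $\tau(\ell)=1$) via the admissible pair $(a^+,n+1)$ is a welcome clarification of a point the paper applies the conventions $Q(\cdot,0)=0$, $Q(\cdot,n+1)=1$ to without comment, and the probabilistic reading of the outer bound -- each row's increments form a probability distribution, so the sum over $A_{-\tau(\ell)}$ is the probability of a specific strict ordering of $m-1$ independent samples -- is an attractive reformulation of the $\leq 1$ estimate that the paper obtains by eliminating the $a$-coordinates one at a time.
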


To conclude this section,  we prove two simple facts about the convergence of permutation sequences that have been mentioned in the introduction. We show that every convergent sequence $(\sigma_n)_{n \in \N}$ such that $|\sigma_n| \not \to \infty$ must be eventually constant, and we establish that every permutation sequence contains a convergent subsequence.

For the first, we remind the reader that Theorem~\ref{teorema_principal} is only stated for permutation sequences $(\sigma_n)_{n \in \N}$ whose lengths tend to infinity, as we claimed that every other convergent sequence is eventually constant. In light of this, we prove this claim prior to addressing the main results. As before, if $\sigma \in S_n$ we write $\rbabs{\sigma}=n$. Recall the notion of convergence of permutation sequences from Definition \ref{def_conv}. 
\begin{claim}\label{claim_eventually_constant}
 Let $(\sigma_n)_{n \in \mathbb{N}}$ be a convergent permutation sequence such that $|\sigma_n| \not \to \infty$. Then
 the  sequence $(\sigma_n)_{n \in \mathbb{N}}$ is eventually constant, that is,
there is a permutation $\sigma$ and an $n_0 \in \N$ such that $n \geq n_0$ implies $\sigma_n = \sigma$.
\end{claim}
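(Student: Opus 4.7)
My plan is to prove the claim in two stages: first to show that the sequence of lengths $(|\sigma_n|)_{n\in\mathbb{N}}$ is actually bounded, and then to deduce eventual constancy from the hypothesis that each density sequence $(t(\tau,\sigma_n))_{n\in\mathbb{N}}$ converges. The assumption $|\sigma_n|\not\to\infty$ supplies an infinite subsequence with $|\sigma_{n_i}|\leq M$ for some $M$, and since $\bigcup_{k=1}^M S_k$ is finite, pigeonhole extracts a further subsequence that is constant, equal to a specific permutation $\sigma^{\ast}\in S_{k^{\ast}}$ with $k^{\ast}\leq M$. Because $(t(\tau,\sigma_n))_{n\in\mathbb{N}}$ converges for every $\tau$, all its subsequential limits coincide with the constant value $t(\tau,\sigma^{\ast})$ attained along this subsequence; in particular, $\lim_n t(\tau,\sigma_n)=t(\tau,\sigma^{\ast})=0$ whenever $|\tau|>k^{\ast}$.

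To rule out an unbounded subsequence, I would use the elementary identity
\[
  \sum_{\tau\in S_{k^{\ast}+1}}t(\tau,\pi)\ =\ 1,
\]
valid for any $\pi$ of length at least $k^{\ast}+1$, since each $(k^{\ast}+1)$-subset of indices realises a unique pattern in $S_{k^{\ast}+1}$. If there were a subsequence $(\sigma_{n'})$ with $|\sigma_{n'}|\to\infty$, applying this identity for large $n'$ and passing to the limit would yield $\sum_{\tau\in S_{k^{\ast}+1}}t(\tau,\sigma^{\ast})=1$; but each summand vanishes by the previous step, a contradiction. Hence $(|\sigma_n|)$ is bounded, and eventually $(\sigma_n)$ takes values in the finite set $F=\bigcup_{k=1}^M S_k$.

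For the second stage, assume for contradiction that $(\sigma_n)$ is not eventually constant. Pigeonhole on $F$ then produces two distinct permutations $\sigma^{\ast},\sigma'\in F$ each occurring infinitely often, so both $t(\tau,\sigma^{\ast})$ and $t(\tau,\sigma')$ must equal the common limit of $(t(\tau,\sigma_n))_{n\in\mathbb{N}}$. Without loss of generality $|\sigma^{\ast}|\leq|\sigma'|$; testing with $\tau=\sigma'$ when $|\sigma^{\ast}|<|\sigma'|$ (so that $t(\sigma',\sigma')=1$ while $t(\sigma',\sigma^{\ast})=0$), or with $\tau=\sigma^{\ast}$ when $|\sigma^{\ast}|=|\sigma'|$ but $\sigma^{\ast}\neq\sigma'$ (so that $t(\sigma^{\ast},\sigma^{\ast})=1$ while $t(\sigma^{\ast},\sigma')=0$), yields the contradiction $1=0$. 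I do not anticipate a serious obstacle: the only point requiring mild care is that distinct permutations are always separated by their own pattern density, an observation that drives both stages.
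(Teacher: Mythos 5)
Your proof is correct, and it leans on the same two elementary facts that drive the paper's argument: that $\sum_{\tau\in S_k}t(\tau,\pi)$ is $1$ when $|\pi|\geq k$ and $0$ otherwise, and that $t(\tau,\pi)=\mathbf{1}[\tau=\pi]$ for permutations of equal length. The organizational difference is that the paper first shows the length sequence $(|\sigma_n|)_{n\in\mathbb{N}}$ is eventually \emph{constant} (the indicator sum $\mathbf{1}[k\leq|\sigma_n|]$ converges for every $k$, forcing $\liminf|\sigma_n|=\limsup|\sigma_n|$), and then pigeonholes inside the single finite set $S_m$; you instead fix a constant subsequence $\sigma^{\ast}$ to anchor all the limits, deduce only that the lengths are \emph{bounded}, and then pigeonhole across all permutations of bounded length, absorbing the possibility of unequal lengths into the final separation step $t(\sigma',\sigma')=1\neq 0=t(\sigma',\sigma^{\ast})$. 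Both routes are sound; the paper's version avoids your case split by stabilizing the length up front, while yours gets away with the weaker boundedness claim at the cost of that case split. The only points deserving a touch more care in a polished write-up are the reuse of the symbol $M$ for two different bounds, and the reuse of $\sigma^{\ast}$ both for the anchor value of Stage 1 and as one of the two pigeonhole values in Stage 2 (these coincide, since the Stage 1 anchor does recur infinitely often, but the text should say so explicitly).
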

\begin{proof}
It follows from \eqref{def_subperm_density_formula_cases} that we have 
$
\sum_{\tau \in S_k} t(\tau, \pi)=\rb_ind [ k \leq \rbabs{\pi} ]
$ for
any $k \in \N$ and any permutation $\pi$, where $\rb_ind [A]$ denotes the indicator random variable for the event $A$. By Definition \ref{def_conv} we get that for any fixed $k \in \N$ the limit
$\lim_{n \to \infty}  \sum_{\tau \in S_k} t(\tau, \sigma_n)=\lim_{n \to \infty} \rb_ind [ k \leq \rbabs{\sigma_n} ] $
 exists and must be equal to $0$ or $1$.

From this and our assumption that $\liminf_{n \to \infty} \rbabs{\sigma_n} <+\infty$ it is easy to deduce that
$\liminf_{n \to \infty} \rbabs{\sigma_n}=\limsup_{n \to \infty} \rbabs{\sigma_n}$, thus there is some $m \in \N$ such that
$\rbabs{\sigma_n}=m$ if $n$ is large enough. 

Now if $\tau, \pi \in S_m$ then $t(\tau,\pi)=\rb_ind[\tau=\pi]$, thus $\lim_{n \to \infty} t(\tau, \sigma_n)$ must be equal to $0$ or $1$ for
all $\tau \in S_m$.
From this and $\rbabs{S_m}=m!<+\infty$ it is straightforward to deduce that the  sequence $(\sigma_n)_{n \in \mathbb{N}}$ is eventually constant.
\end{proof}

To conclude this section, we show that permutation sequences always contain convergent subsequences.
\begin{lemma}\label{conv_subseq}
Every permutation sequence has a convergent subsequence.
\end{lemma}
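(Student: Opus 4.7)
The plan is to apply a standard diagonalization argument. First I would observe that the set $\mathcal{S}=\bigcup_{n=1}^\infty S_n$ of all finite permutations is countable, so we can fix an enumeration $\tau_1,\tau_2,\ldots$ of its elements. For any fixed permutation $\tau$ and any permutation $\pi$, the density $t(\tau,\pi)$ lies in $[0,1]$, so for each $j$ the real sequence $(t(\tau_j,\sigma_n))_{n\in\N}$ is bounded.

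Given an arbitrary permutation sequence $(\sigma_n)_{n\in\N}$, I would construct inductively a nested family of subsequences $(\sigma_n^{(j)})_{n\in\N}$, $j=1,2,\ldots$, as follows. Set $(\sigma_n^{(0)})=(\sigma_n)$. Assuming $(\sigma_n^{(j-1)})_{n\in\N}$ has been defined, use the Bolzano--Weierstrass theorem applied to the bounded real sequence $(t(\tau_j,\sigma_n^{(j-1)}))_{n\in\N}\subset[0,1]$ to extract a subsequence $(\sigma_n^{(j)})_{n\in\N}$ of $(\sigma_n^{(j-1)})_{n\in\N}$ along which $(t(\tau_j,\sigma_n^{(j)}))_{n\in\N}$ converges. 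By construction, for every $j$, the sequence $(t(\tau_i,\sigma_n^{(j)}))_{n\in\N}$ converges for each $i\leq j$, since each such sequence is a subsequence of the already-convergent sequence $(t(\tau_i,\sigma_n^{(i)}))_{n\in\N}$.

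Finally, I would extract the diagonal $\widetilde\sigma_n=\sigma_n^{(n)}$. For any fixed $\tau=\tau_j$, the tail $(\widetilde\sigma_n)_{n\geq j}$ is a subsequence of $(\sigma_n^{(j)})_{n\in\N}$, so $(t(\tau_j,\widetilde\sigma_n))_{n\in\N}$ converges. Since $j$ was arbitrary, $(\widetilde\sigma_n)_{n\in\N}$ is a convergent permutation sequence in the sense of Definition~\ref{def_conv}, which proves the lemma.

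There is no real obstacle here; the proof is a textbook diagonal extraction, and the only point to verify carefully is the countability of $\mathcal{S}$ and the fact that the diagonal sequence is eventually a subsequence of every $(\sigma_n^{(j)})_{n\in\N}$, both of which are immediate from the construction.
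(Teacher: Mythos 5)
Your proof is correct and follows essentially the same route as the paper's: enumerate the countably many finite permutations, use Bolzano--Weierstrass repeatedly to extract nested subsequences along which successively more densities $t(\tau_j,\cdot)$ converge, and then take the diagonal. The only difference is that you spell out the Bolzano--Weierstrass step and the ``tail is a subsequence'' observation more explicitly, which the paper leaves implicit.
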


\begin{proof}
Let $(\sigma_n)_{n \in \mathbb{N}}$ be a permutation sequence.
 We shall find a convergent subsequence of $(\sigma_n)_{n \in \mathbb{N}}$ by a standard diagonalization argument.
 Since $S_n$ is finite for every $n \geq 1$, the set of $S=\bigcup_{n=1}^{\infty} S_n$ of all finite permutations is countable,
 say $S=(\tau_m)_{m \in \mathbb{N}}$.

If $(\sigma_n)_{n \in \mathbb{N}}$ does not converge, starting with $\tau_1$,
 we let $(\sigma_n^1)_{n \in \mathbb{N}}$ be a subsequence of $(\sigma_n)_{n \in \mathbb{N}}$ for which the bounded real
 sequence $(t(\tau_1,\sigma^1_n))_{n \in \mathbb{N}}$ converges. Inductively, for $m \geq 2$, we let $(\sigma_n^m)_{n \in \mathbb{N}}$ be 
a subsequence of $(\sigma_n^{m-1})_{n \in \mathbb{N}}$ such that $(t(\tau_m,\sigma^{m-1}_n))_{n \in \mathbb{N}}$ converges. 
It is now easy to see that the diagonal sequence $(\sigma_n^n)_{n \in \mathbb{N}}$ is such that, for every positive integer $m$, 
the sequence $(t(\tau_m,\sigma_n^n))_{n \in \mathbb{N}}$ converges. In other words, the sequence $(\sigma_n^n)_{n \in \mathbb{N}}$ is 
a convergent subsequence of $(\sigma_n)$.
\end{proof}

\section{Convergence of a permutation sequence}\label{section_convergence}

This section is devoted to proving the following theorem.
\begin{theorem}\label{teorema_principal1}
Let $(\sigma_n)_{n \in \mathbb{N}}$ be a convergent permutation sequence satisfying the condition $\lim_{n \to \infty} |\sigma_n|=\infty$. Then there exists a limit permutation $Z:[0,1]^2\to[0,1]$ such that
$$\lim_{n\to\infty}t(\tau,\sigma_n) = t(\tau,Z) \textrm{ for every permutation } \tau.$$
\end{theorem}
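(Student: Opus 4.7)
The plan is to mimic the Lovász--Szegedy graphon construction via compactness and martingale convergence, built on top of the weak regularity lemma (Lemma~\ref{prop_weak}). Fix a refining dyadic sequence of equitable partitions by setting $k_j = 2^j$, and choose a sequence $\eps_j\to 0$ so that Lemma~\ref{prop_weak} applies at scale $k_j$ with parameter $\eps_j$. For each $n$ large enough, Lemma~\ref{prop_weighted} makes the partition matrix $Q_n^{(j)}:=Q_{\sigma_n,P^{(j)}}\in [0,1]^{k_j\times k_j}$ a weighted permutation, where $P^{(j)}$ is the equitable partition of $[|\sigma_n|]$ into $k_j$ intervals. Since each cube $[0,1]^{k_j^2}$ is compact, a standard diagonal extraction yields a subsequence $(\sigma_{n_m})$ along which $Q_{n_m}^{(j)}\to Q^{(j)}_\infty$ entrywise for every $j$, with each limit a weighted permutation.

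Next, build the limit object. Identify $[k_j]$ with the equitable partition of $[0,1]$ into $k_j$ intervals of equal length and let $Z_j:[0,1]^2\to[0,1]$ be the corresponding blow-up step function of $Q^{(j)}_\infty$. Since the partitions refine (or do so up to an $O(1/n)$ perturbation at the boundaries, which vanishes in the limit), $Q^{(j)}_\infty$ is the block-average of $Q^{(j+1)}_\infty$, so $(Z_j)$ is a bounded martingale on $[0,1]^2$ with respect to the dyadic filtration. By the martingale convergence theorem, $Z_j\to Z$ almost everywhere and in $L^1$ for some measurable $Z:[0,1]^2\to[0,1]$. To verify that $Z$ is a limit permutation in the sense of Definition~\ref{def.perm.lim}, one passes property~(a) of each $Q^{(j)}_\infty$ (monotonicity in the second coordinate) to the almost-everywhere limit to get monotonicity of $Z(x,\cdot)$ for almost every $x$, and then adjusts $Z$ on a null set of rows to enforce left-continuity, continuity at $0$, and $Z(x,1)=1$; the mass condition in part~(b) follows from property~(b) of each weighted permutation, $j-1\le\sum_i Q^{(j)}_\infty(i,j)\le j$, by dominated convergence.

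Finally, for densities, given $\tau$ and $\eta>0$, pick $j$ with $2|\tau|^2\eps_j<\eta$. Combining Lemma~\ref{prop_weak} with Lemma~\ref{lema.cut.subpermut} gives
\[
  \bigl|t(\tau,\sigma_{n_m})-t(\tau,\mathcal{K}(P^{(j)},Q_{n_m}^{(j)}))\bigr|<\eta
\]
for all large $m$. The density $t(\tau,\mathcal{K}(P^{(j)},Q_{n_m}^{(j)}))$ is a polynomial in the entries of $Q_{n_m}^{(j)}$ via Definition~\ref{def.dens.perm2}, hence as $m\to\infty$ it converges to $t(\tau,Z_j)$. A further application of dominated convergence, using $Z_j\to Z$ pointwise a.e.\ and $|Z_j|\le 1$, yields $t(\tau,Z_j)\to t(\tau,Z)$ as $j\to\infty$. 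Therefore $t(\tau,\sigma_{n_m})\to t(\tau,Z)$, and because the original sequence is assumed convergent under Definition~\ref{def_conv}, the full sequence $(t(\tau,\sigma_n))_n$ shares this limit.

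The main obstacle I foresee is not the compactness-plus-diagonal skeleton, which is standard, but rather producing an actual limit function $Z$ (as opposed to an equivalence class) that satisfies the fiberwise cdf condition along \emph{every} $x\in[0,1]$ and the integral condition along every $y\in[0,1]$, not merely almost everywhere. This forces one to choose a canonical representative of the martingale limit and carefully modify it on a null set of rows and columns, which is presumably where the measure-theoretic technicalities postponed to Section~\ref{section_technical} are needed. The remaining steps are straightforward applications of Lemmas~\ref{prop_weak} and~\ref{lema.cut.subpermut}.
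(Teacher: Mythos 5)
Your skeleton (weak regularity, diagonal extraction on the partition matrices, martingale convergence of the step functions) is the same as the paper's, and your final assembly via Lemma~\ref{lema.cut.subpermut} matches the paper's triangle-inequality argument. But there is a genuine gap at the step ``a further application of dominated convergence, using $Z_j\to Z$ pointwise a.e.\ and $|Z_j|\le 1$, yields $t(\tau,Z_j)\to t(\tau,Z)$.'' Unlike the graph case, $t(\tau,Z)$ is not a Lebesgue integral of a bounded function of the values of $Z$: by Definition~\ref{def.dens.subperm.lim} the inner integral is a Lebesgue--Stieltjes integral of the indicator of the open simplex $[0,1]^m_<$ against the \emph{product of the measures induced by the fibers} $Z(x_{\tau^{-1}(1)},\cdot),\dots,Z(x_{\tau^{-1}(m)},\cdot)$. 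Pointwise a.e.\ convergence of the functions $Z_j$ does not let you pass to the limit in these measures. What is actually needed, and what occupies Section~\ref{section_technical}, is: (i) upgrading a.e.\ pointwise convergence to weak convergence of the fiber cdfs $Z_{Q_j}(x,\cdot)\overset{w}{\longrightarrow}Z(x,\cdot)$ for a.e.\ $x$ (Lemma~\ref{rud.Zweak}, via pointwise convergence on a dense countable set); (ii) the Multivariate Helly--Bray Theorem to get weak convergence of the product measures; and (iii) Alexandrov's Portmanteau Theorem applied to the set $[0,1]^m_<$, which is legitimate only because the limit product measure assigns mass zero to $\partial[0,1]^m_<$. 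That last fact is false in general for cdf-valued kernels (if many fibers shared a common atom, the diagonal $\{\alpha_i=\alpha_{i+1}\}$ would carry positive mass); it holds here only because the mass condition $\int_0^1 Z(x,y)\,dx=y$ forces, for each $\alpha$, the set of $x$ with a jump at $\alpha$ to be null (Lemmas~\ref{Lema.DescNula} and~\ref{lema.critico}). Only after this does one obtain a.e.\ convergence of the core functions $L_{\tau,Z_{Q_j}}\to L_{\tau,Z}$, at which point dominated convergence in the outer ($x$) integral finishes the argument (Lemma~\ref{lem.QtoZ}). Your closing paragraph misplaces the difficulty: the paper explicitly notes after Definition~\ref{def.perm.lim} that enforcing the fiberwise conditions for \emph{every} $x$ rather than a.e.\ is not the crux; the crux is the continuity of $\tau\mapsto t(\tau,\cdot)$ under the martingale limit.

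Two smaller points. First, $t(\tau,\mathcal{K}(P^{(j)},Q^{(j)}_{n_m}))$ is a sum over tuples in $[|\sigma_{n_m}|]^{|\tau|}_<$, so it is not a fixed polynomial in the $k_j^2$ entries of $Q^{(j)}_{n_m}$; relating it to $t(\tau,Z_j)$ as $m\to\infty$ requires the normalization estimates of Lemma~\ref{lema_auxiliar} and Claims~\ref{eq13b}--\ref{claim_UB2} (the factor $\binom{|\sigma'_m|}{|\tau|}^{-1}\binom{k_j}{|\tau|}(|\sigma'_m|/k_j)^{|\tau|}\to 1$). Second, deducing the mass condition $\int_0^1 Z(x,y)\,dx=y$ for \emph{every} $y$ from the column sums of the $Q^{(j)}_\infty$ is not a one-line dominated-convergence argument either; the paper runs a contradiction argument using the block-average identity (iii) of Lemma~\ref{lemaLS.5.2} together with monotonicity of the fibers. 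These are repairable, but the Portmanteau/Helly--Bray step is the missing idea.
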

This is precisely the part of Theorem~\ref{teorema_principal} concerned with the existence of a limit for a convergent permutation sequence.

The statement of Theorem~\ref{teorema_principal1} still depends on the definition of $t(\tau,Z)$, the subpermutation density of $\tau$ in $Z$, which we have mentioned in the introduction in connection with random permutations. We shall now give an alternative definition of this concept. Some introductory discussion is needed: it is known that, associated with every cdf $F:[0,1]\to[0,1]$, there is a Lebesgue-Stieltjes probability measure $\mu_F$ over the Borel sets of $[0,1]$, namely the measure satisfying $\mu_F([0,1])=1$ and  $\mu_F([a,b))=F(b)-F(a)$, for $0\leq a<b \leq 1$. Moreover, given $k$ cdfs $F_1,\ldots,F_k$ and their respective measures $\mu_1,\ldots,\mu_k$, the product measure $\mu=\mu_1\times\cdots\times\mu_k$ is a probability measure in $[0,1]^k$ over the Borel sets of $[0,1]^k$. These facts are proved in~\cite{loeve77}, Chapter III, \S 10.

The usual notation for the Lebesgue-Stieltjes integral of a Borel-measurable function $g:[0,1]\to \mathbb{R}$ over the measure $\mu_1$ is $\int_{[0,1]}g\ d\mu_1\ =\ \int_{[0,1]}g\ dF_1$. When $S=[a,b]$, we use $\int_{[a,b]} g\ dF_1 = \int_a^b  g\ dF_1$. For a Borel-measurable function $g:[0,1]^k\to \mathbb{R}$, the integral with respect to $\mu=\mu_1\times\cdots\times\mu_k$ is denoted by
$$\displaystyle{\int_{[0,1]^k}g\ d\mu\ =\ \int_{[0,1]^k}g\ dF_1\cdots dF_k}.$$

\begin{definition}\label{def.LtauZ}
For $m>1$, let $\tau:[m]\to[m]$ be a permutation and let $Z:[0,1]^2\to[0,1]$
be such that $Z(x,\cdot)$ is a cdf for every $x\in[0,1]$. The \emph{core function} of $Z$ is the function  $L_{\tau,Z}:[0,1]^m\to[0,1]$ mapping $x=(x_1,\ldots,x_m)\in[0,1]^m$ to
\[
  L_{\tau,Z}(x)\ =\ \int_{[0,1]^m_<}dZ(x_{\tau^{-1}(1)},\cdot) \ dZ(x_{\tau^{-1}(2)},\cdot)\cdots
  \ dZ(x_{\tau^{-1}(m)},\cdot).
\]
\end{definition}
Observe that integration is taken over the $m$-simplex $[0,1]^m_< \subset [0,1]^m$, which is the set of $m$-tuples $(y_1,\ldots,y_m)$ such that $0\leq y_1<y_2<\cdots<y_m \leq 1$. The order of the integrating factors $dZ(x_{\tau^{-1}(1)},\cdot) \ dZ(x_{\tau^{-1}(2)},\cdot) \cdots \ dZ(x_{\tau^{-1}(m)},\cdot)$ in the product measure reflects the connection between the integration variable corresponding to $y_i$ and the measure associated with the cdf $Z(x_{\tau^{-1}(i)},\cdot)$. Also note that, in the above definition, the function $Z$ need not satisfy the mass condition in the definition of limit permutations.

We are now ready to define $t(\tau,Z)$ for a limit permutation $Z$.
In this definition, it is necessary to integrate the core function $L_{\tau,Z}(\cdot)$, whose measurability is established in Proposition~\ref{propQtoZ}.
\begin{definition}[Subpermutation density in a limit permutation]\label{def.dens.subperm.lim}
For a permutation $\tau:[m]\to[m]$ ($m>1$) and a limit permutation $Z:[0,1]^2\to[0,1]$,
the \emph{subpermutation density of $\tau$ in $Z$} is given by
\[
  t(\tau,Z)\ =\ m!\int_{[0,1]^m_<}L_{\tau,Z}(x)dx\ 
\]
\[
  =\ m!\int_{[0,1]^m_<}\Big(\int_{[0,1]^m_<}dZ(x_{\tau^{-1}(1)},\cdot) \ \cdots \ dZ(x_{\tau^{-1}(m)},\cdot)\Big)
  \ dx_1\cdots dx_m.
\]
\end{definition}
The factor $m!$ acts as a normalizer, since $\lambda([0,1]^m_<)=1/m!$, where $\lambda$ denotes the usual Lebesgue measure.

We may now concentrate on the proof of Theorem~\ref{teorema_principal1}.
This depends on the construction of a limit object $Z$, which we now sketch.
As in Section 5.4 of~\cite{lovasz06}, we first construct a subsequence $(\sigma_m')$ of $(\sigma_n)$ consisting of permutations with well-behaved weakly regular partitions, for which the partition matrices approximate a structured sequence $(Q_j)$ of weighted permutations.

With a weighted permutation $Q:[k]^2\to[0,1]$, we may associate a \emph{step function} $Z_Q:[0,1]^2 \to [0,1]$
which takes $(x,y)\in[0,1]^2$ to
\begin{equation}\label{eqdefZ}
\begin{matrix}
\\
Z_{Q}(x,y)\\
\\
\end{matrix}
=
\left\{
\begin{matrix}
0, &\textrm{ if }y=0;\\
\lceil ky \rceil/k, &\textrm{ if }x=0;\\
Q(\lceil kx \rceil,\lceil ky \rceil), &\textrm{ otherwise.}
\end{matrix}
\right.
\end{equation}
For example, in the case when $Q=Q_\sigma$ for some permutation $\sigma$ on $[n]$, we have
\begin{equation}\label{eqdefZsigma}
\begin{matrix}
\\
Z_\sigma(x,y) = Z_{Q_\sigma}(x,y)\\
\\
\end{matrix}
=
\left\{
\begin{matrix}
0, &\textrm{ if }y=0;\\
\lceil ny \rceil/n, &\textrm{ if }x=0;\\
Q_\sigma(\lceil nx \rceil,\lceil ny \rceil), &\textrm{ otherwise.}
\end{matrix}
\right.
\end{equation}
Recall that $Q_\sigma(\lceil nx \rceil,\lceil ny \rceil)=1$ if $\sigma(\lceil nx \rceil)<\lceil ny \rceil$ and $Q_\sigma(\lceil nx \rceil,\lceil ny \rceil)=0$ otherwise.

Therefore, with the sequence of weighted permutations $(Q_j)_{j \in \mathbb{N}}$, we may associate a sequence $(Z_j)_{j \in \mathbb{N}}$ of step functions $Z_j:[0,1]^2\to[0,1]$. We first show that the subpermutation density of some permutation $\tau$ in a weighted permutation $Q$ is well-approximated by the subpermutation density of $\tau$ in the corresponding step function $Z_Q$.
\begin{lemma}\label{lema_auxiliar}
Let $Q$ be a weighted permutation of order $n$ and let $Z_Q$ be the
step function associated with $Q$. Then, if $\tau$ is a permutation of length $m<n$, we have
$$\Big(1-\frac{m}{n}\Big)^mt(\tau,Q)\ \leq\ t(\tau,Z_Q)\ \leq\
t(\tau,Q)\ +\ \frac{(m+2)!}{n}$$
Consequently, for every $\eps>0$, $|t(\tau,Z_Q)-t(\tau,Q)|<\eps$ if $n$ is sufficiently large.
\end{lemma}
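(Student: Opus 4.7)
The plan is to decompose the definition of $t(\tau,Z_Q)$ into an integral over a grid induced by $Q$, identify the ``diagonal'' contribution with $t(\tau,Q)$, and control the off-diagonal error by a crude volume bound. First I observe that, for each $x\in(0,1]$, the cdf $Z_Q(x,\cdot)$ depends only on $X:=\lceil nx\rceil$ and is a right-continuous-after-the-jump step function, so its Lebesgue--Stieltjes measure $\mu_x$ is the probability measure supported on the $(n+1)$-point grid $\{(a-1)/n:a\in[n+1]\}$ that places mass $Q(X,a)-Q(X,a-1)$ at $(a-1)/n$, using the conventions $Q(\cdot,0)=0$ and $Q(\cdot,n+1)=1$. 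Crucially $\mu_x([0,1])=1$.

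Next I partition the outer domain $[0,1]^m_<$ in the integral defining $t(\tau,Z_Q)$ by $X=(\lceil nx_1\rceil,\ldots,\lceil nx_m\rceil)$. On the ``good'' region $R^{\ast}$ where $X_1<\cdots<X_m$ strictly, the cells are exactly the cubes $\prod_{i=1}^m((X_i-1)/n,X_i/n]$ indexed by $X\in[n]^m_<$, each of volume $n^{-m}$ and automatically contained in $[0,1]^m_<$. On any such cell the core function $L_{\tau,Z_Q}(x)$ is constant, and since each $\mu_{x_i}$ is discrete, the inner integral collapses to a sum over $A\in[n+1]^m_<$:
\begin{equation*}
L(X)=\sum_{A\in[n+1]^m_<}\prod_{i=1}^m\Big(Q(X_{\tau^{-1}(i)},a_i)-Q(X_{\tau^{-1}(i)},a_i-1)\Big).
\end{equation*}
Reindexing via $\ell=\tau^{-1}(i)$ recognises this as the inner sum in Definition~\ref{def.dens.perm2}, so summing over $X\in[n]^m_<$ and prepending $m!/n^m$ yields the main-term identity
\begin{equation*}
m!\int_{R^{\ast}}L_{\tau,Z_Q}(x)\,dx\ =\ \frac{m!\binom{n}{m}}{n^m}\,t(\tau,Q)\ =\ \prod_{i=0}^{m-1}\Big(1-\frac{i}{n}\Big)\cdot t(\tau,Q).
\end{equation*}

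Writing $t(\tau,Z_Q)=\prod_{i=0}^{m-1}(1-i/n)\,t(\tau,Q)+E$, where $E\geq 0$ is the contribution from the complement of $R^{\ast}$, the lower bound follows from $E\geq 0$ together with the elementary estimate $\prod_{i=0}^{m-1}(1-i/n)\geq(1-m/n)^m$. For the upper bound, since each $\mu_x$ is a probability measure, the relevant product measure has total mass $1$, giving $L_{\tau,Z_Q}\leq 1$; then
\begin{equation*}
E\ \leq\ m!\cdot\operatorname{vol}\!\left([0,1]^m_<\setminus R^{\ast}\right)\ =\ 1-\prod_{i=0}^{m-1}\Big(1-\frac{i}{n}\Big)\ \leq\ \sum_{i=0}^{m-1}\frac{i}{n}\ =\ \frac{m(m-1)}{2n}
\end{equation*}
by the Weierstrass product inequality, which is comfortably below $(m+2)!/n$; combined with $\prod_{i=0}^{m-1}(1-i/n)\leq 1$ and $t(\tau,Q)\geq 0$, this finishes the upper bound. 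I expect the only real obstacle to be the bookkeeping in identifying $L(X)$ with the inner sum of Definition~\ref{def.dens.perm2}, since the core function is indexed through $\tau^{-1}$ while the subpermutation density is indexed through $\tau$; everything else is routine volume counting and an elementary product estimate.
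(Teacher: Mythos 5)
Your proof is correct and follows essentially the same route as the paper's: decompose the outer integral over the region where the $x_i$ fall in distinct grid cells, identify that contribution with $\frac{m!}{n^m}\binom{n}{m}\,t(\tau,Q)$, and bound the remaining contribution by its Lebesgue volume times $\sup L_{\tau,Z_Q}\leq 1$. Your bookkeeping of the discrete Lebesgue--Stieltjes measures and the reindexing through $\tau^{-1}$ is accurate, and your error term $\binom{m}{2}/n$ is in fact sharper than the paper's $m!\,m^2/n$, both comfortably within the stated $(m+2)!/n$.
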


\begin{proof}
Let $Z=Z_Q$.  Let $[0,1]^m_{<,good}\subseteq[0,1]^m_<$ be the subset
of $m$-tuples $(x_1<x_2<\ldots<x_m)$ such that $\lfloor x_i\cdot
n\rfloor\not=\lfloor x_{i+1}\cdot n\rfloor$ for every $1\leq i<m$.

Since $t(\tau,Z)\ =\ m!\int_{[0,1]^m_<}L_{\tau,Z}(x)dx$, then
\[
  m!\int_{[0,1]^m_{<,good}}L_{\tau,Z}(x)dx\ \leq\ t(\tau,Z)\ \leq\
m!\int_{[0,1]^m_{<,good}}L_{\tau,Z}(x)dx\ +\
m!\cdot\lambda([0,1]^m_<\backslash[0,1]^m_{<,good}),
\]
where $\lambda$ is the Lebesgue measure in $[0,1]^m$.

It is not hard to see that
\[
 \int_{[0,1]^m_{<,good}}L_{\tau,Z}(x)dx\ =\ \frac{1}{n^m}\binom{n}{m}t(\tau,Q),
\]
and that
\[
 \Big(1-\frac{m}{n}\Big)^m\ \leq\ \frac{m!}{n^m}\binom{n}{m}\ \leq\ 1.
\]
On the other hand, it is clear that
\[
 \lambda([0,1]^m_<\backslash[0,1]^m_{<,good})\ \leq\
n\binom{m}{2}\frac{1}{n^2}\ \leq\ \frac{m^2}{n}.
\]
Then,
\[
 \frac{m!}{n^m}\binom{n}{m}t(\tau,Q)\ \leq\ t(\tau,Z)\ \leq\
t(\tau,Q)\ +\ m!\frac{m^2}{n}
\]
and the result follows.
\end{proof}

The next step is to show that the sequence $(Z_j(x,y))_{j \in \mathbb{N}}$ converges almost everywhere. The desired function is then built upon this limit.

We now follow this plan. First, we state a result that gives us a convenient subsequence $(\sigma_m')$ of $(\sigma_n)$, whose proof lies in Section~\ref{provalemaLS.5.1}.
\begin{lemma}\label{lemaLS.5.1}
Every permutation sequence $(\sigma_n)_{n \in \mathbb{N}}$ with $\lim_{n \to \infty} |\sigma_n|=\infty$ has a subsequence $(\sigma'_m)_{m \in \mathbb{N}}$, $|\sigma_m'| \geq m$, for which there exist a sequence of positive integers $(k_m)_{m \in \mathbb{N}}$ with $\lim_{m \to \infty}k_m=\infty$ and a sequence of weighted permutations $(Q_m)_{m \in \mathbb{N}}$, $Q_m:[k_m]^2\to[0,1]$, satisfying the following properties.
\begin{itemize}
\item[(i)] If $j<m$, then $k_j$ divides $k_m$ and $Q_j=\widehat{Q}_{m,j}$, where
$\widehat{Q}_{m,j}$ is the matrix obtained from $Q_m$ by merging its entries in $k_j$ consecutive blocks of size $k_m/k_j$, and by replacing each block by a single value, the arithmetic mean of the entries in that block.
\item[(ii)] For every $j<m$, $\sigma'_m$ has a weak $(1/j)$-regular $k_j$-partition $P_{m,j}$ whose partition matrix $Q_{m,j}$ has dimension $k_j\times k_j$ and satisfies
$$d_{\square}(Q_{m,j},Q_j)<1/j.$$
Moreover, for $1\leq i<j\leq m$, $P_{m,j}$ refines $P_{m,i}$.  
\end{itemize}
\end{lemma}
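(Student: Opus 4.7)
My plan is to follow the template of Section~5.4 of Lov\'asz--Szegedy~\cite{lovasz06}: fix in advance a divisibility chain of partition sizes $k_1 \mid k_2 \mid \cdots$, associate to each $\sigma_n$ a chain of nested equitable partitions of $[|\sigma_n|]$ whose partition matrices live in a countable product of compact cubes, and then extract a simultaneously convergent subsequence by a diagonal argument.

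To begin, I would invoke Lemma~\ref{prop_weak} to select integers $k_1 \mid k_2 \mid \cdots$ with $\lim_{j\to\infty} k_j = \infty$, where $k_j$ is chosen so large that, for every $N$ above some threshold $N_j$, every equitable $k_j$-partition of $[N]$ is weak $(1/j)$-regular for every permutation of $[N]$. Taking $k_j = L_j!$ along a sufficiently fast-growing integer sequence $L_j$ handles the divisibility and the regularity thresholds simultaneously. For each $n$ with $|\sigma_n|$ sufficiently large, I would then construct nested equitable partitions $P_n^{(1)}, P_n^{(2)}, \ldots$ of $[|\sigma_n|]$, where $P_n^{(j)}$ consists of $k_j$ consecutive-interval blocks and $P_n^{(j+1)}$ refines $P_n^{(j)}$; this is possible because $k_j \mid k_{j+1}$, at the cost of block sizes differing by a bounded amount within each level. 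By Lemma~\ref{prop_weighted}, each partition matrix $Q_{\sigma_n, P_n^{(j)}}$ is then a weighted permutation lying in the compact cube $[0,1]^{k_j \times k_j}$.

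A standard diagonal extraction over the countable family of compact cubes $\{[0,1]^{k_j \times k_j}\}_{j \in \mathbb{N}}$ now produces a subsequence $(\hat{\sigma}_p)$ of $(\sigma_n)$ along which, for every fixed $j$, the matrices $Q_{\hat{\sigma}_p, P_{\hat{\sigma}_p}^{(j)}}$ converge entrywise to a limit $Q_j \in [0,1]^{k_j \times k_j}$. By thinning $(\hat{\sigma}_p)$ once more into a subsequence $(\sigma'_m)$, I can additionally arrange $|\sigma'_m| \geq m$ together with $d_{\square}\bigl(Q_{\sigma'_m, P_{\sigma'_m}^{(j)}},\, Q_j\bigr) < 1/j$ for every $j < m$. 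Setting $P_{m,j} := P_{\sigma'_m}^{(j)}$ and $Q_{m,j} := Q_{\sigma'_m, P_{m,j}}$ delivers property (ii) at once: the weak $(1/j)$-regularity of $P_{m,j}$ is automatic from the choice of $k_j$, and the rectangular-distance bound is built in. Each limit $Q_j$ is itself a weighted permutation because the two defining conditions of a weighted permutation are closed under entrywise limits.

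Property (i) is then obtained by passing the block-averaging identity for finite partition matrices to the limit. Since $P_{\sigma'_n}^{(m)}$ refines $P_{\sigma'_n}^{(j)}$ for $j<m$, if all blocks within each level had exactly equal size, then $Q_{\sigma'_n, P_{\sigma'_n}^{(j)}}$ would literally coincide with the block-average of $Q_{\sigma'_n, P_{\sigma'_n}^{(m)}}$. The main technical nuisance I expect is precisely this: equitable partitions allow block sizes to differ by one or two units, so the identity only holds up to an error of order $k_m/|\sigma'_n|$. Since $k_m$ is fixed as $n\to\infty$, this error vanishes in the limit, yielding $Q_j = \widehat{Q}_{m,j}$ exactly. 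A cleaner workaround, should the error bookkeeping become burdensome, is to preprocess $\sigma'_m$ by padding or truncating by $O(k_m)$ positions so that $|\sigma'_m|$ becomes divisible by $k_m$; however, the vanishing-error argument alone suffices and keeps the proof close to the graph version.
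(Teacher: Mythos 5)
Your proposal is correct and takes essentially the same approach as the paper's proof: both extract, by a diagonal argument over fixed-size compact cubes, a subsequence whose nested equitable partition matrices converge entrywise to a divisibility-compatible chain of weighted permutations, and both obtain condition~(i) by noting that the block-averaging identity between partition matrices at levels $j$ and $m$ holds up to an error of order $k_m/|\sigma'_n|$, which vanishes as $n\to\infty$. The only stylistic difference is that you pre-commit to the divisibility chain $k_1\mid k_2\mid\cdots$ at the outset, whereas the paper constructs $k_m$ inductively as the smallest multiple of $k_{m-1}$ exceeding the threshold $k_0(1/m)$ from Lemma~\ref{prop_weak} while simultaneously refining a nested family of subsequences $(\sigma_n^m)$; the underlying diagonalization is the same.
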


We now state a few properties of the permutation subsequence given by Lemma~\ref{lemaLS.5.1}. For $1 \leq j \leq m$, let $R_{m,j}=\mathcal{K}(P_{m,j},Q_{m,j})$ and $S_{m,j}=\mathcal{K}(P_{m,j},Q_j)$, both of which are square matrices of order $|\sigma_m'|$, since the blow-up matrix $\mathcal{K}(P,Q)$ is obtained from $Q$ and from the partition $P=\bigcup_r C_r$ by replacing each single entry $(i,j)$ by a block of size $|C_i||C_j|$ assuming the same value.

\begin{claim}\label{eq13b}
The weighted permutations $Q_{\sigma'_m}$ and $S_{m,j}$ satisfy
$$\displaystyle{d_{\square}(Q_{\sigma'_m},S_{m,j}) \leq \frac{3}{j}+\frac{2}{k_j}+\frac{2}{|\sigma_m'|}}.$$
\end{claim}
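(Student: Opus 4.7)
The plan is to apply the triangle inequality through the intermediate matrix $R_{m,j} = \mathcal{K}(P_{m,j}, Q_{m,j})$, writing
$$d_{\square}(Q_{\sigma'_m}, S_{m,j})\;\leq\;d_{\square}(Q_{\sigma'_m}, R_{m,j}) + d_{\square}(R_{m,j}, S_{m,j}).$$
The first summand is at most $1/j$: by Lemma~\ref{lemaLS.5.1}(ii), $P_{m,j}$ is a weak $(1/j)$-regular partition of $\sigma'_m$, which by the definition given just after Lemma~\ref{prop_weak} means exactly $d_{\square}(Q_{\sigma'_m}, R_{m,j})\leq 1/j$. The real task is therefore to bound $d_{\square}(R_{m,j}, S_{m,j})\leq 2/j + 2/k_j + 2/|\sigma'_m|$, and the crucial input here is $d_{\square}(Q_{m,j}, Q_j) < 1/j$, also from Lemma~\ref{lemaLS.5.1}(ii).

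For this second estimate, write $n = |\sigma'_m|$, $k = k_j$, and $D = Q_{m,j} - Q_j$, so that $R_{m,j} - S_{m,j} = \mathcal{K}(P_{m,j}, D)$. Fix an interval $S\in I[n]$ and indices $a<b\in[n+1]$. Because both blow-ups are constant on each cell of $P_{m,j}\times P_{m,j}$, the expression inside the maximum in Definition~\ref{def.dist.pond} collapses to $\sum_u m_u\,\Delta_u$, where $m_u = |S\cap C_u|$, $r(\cdot)$ denotes the block-index map, and $\Delta_u = D(u,r(b)) - D(u,r(a))$. Monotonicity of $Q_{m,j}(u,\cdot)$ and $Q_j(u,\cdot)$ in the second argument gives $|\Delta_u|\leq 1$. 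I would split this sum according to whether each $u$ corresponds to a full block $(m_u = |C_u|)$ or a partial one $(0<m_u<|C_u|)$. Only the two endpoint blocks of $S$ can be partial, and their combined contribution is bounded in absolute value by $2\lceil n/k\rceil \leq 2n/k + 2$, producing the $2/k_j + 2/n$ slack after division by $n$.

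The remaining full-block indices form an interval $T_0\subseteq[k]$, and I would decompose $|C_u| = c + \varepsilon_u$ with $c = \lfloor n/k\rfloor$ and $\varepsilon_u\in\{0,1\}$. The principal piece $c\sum_{u\in T_0}\Delta_u$ is bounded by $c\cdot k\cdot d_{\square}(Q_{m,j},Q_j) \leq (n/k)(k/j) = n/j$, by applying the cut-norm bound on $D$ (Definition~\ref{def.dist.pond}) to the interval $T_0$. The correction piece $\sum_{u\in T_0}\varepsilon_u\Delta_u$ ranges over the ``larger'' blocks in $T_0$; choosing the canonical equitable partition---in which all $(c+1)$-sized blocks lie contiguously at one end---makes $\{u\in T_0:\varepsilon_u=1\}$ also an interval, so the same cut-norm bound gives $k/j$, contributing $k/(jn)\leq 1/j$ after normalization (using $k\leq n$). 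Adding the three pieces and dividing by $n$ yields $d_{\square}(R_{m,j}, S_{m,j})\leq 2/j + 2/k_j + 2/n$, and combined with the first summand of the triangle inequality this produces the claimed bound.

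The main technical nuisance is the inhomogeneity of equitable partitions: block sizes differ by $\pm 1$ and the interval $S$ need not align with block boundaries, so three separate error terms must be tracked (partial blocks at the endpoints of $S$, the principal cut-norm term, and the $\varepsilon_u$-correction for oversize blocks). Conceptually, however, the argument is just linear manipulation of cut-norm sums, relying only on the definitions of rectangular distance and blow-up, and on the regularity data already packaged in Lemmas~\ref{prop_weak} and~\ref{lemaLS.5.1}.
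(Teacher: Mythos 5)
Your overall route is the same as the paper's: the triangle inequality through the intermediate blow-up $R_{m,j}$, the weak regularity bound $d_{\square}(Q_{\sigma'_m},R_{m,j})\leq 1/j$, and then splitting the sum over $S$ into the two partial endpoint blocks and the full interior blocks. What is genuinely different---and arguably better---is your handling of the full-block sum. The paper passes directly from $\bigl|\sum_y |C_y|\,\Delta_y\bigr|$ to $\lceil n/k_j\rceil\,\bigl|\sum_y \Delta_y\bigr|$, which is not a valid inequality once the $\Delta_y$ are allowed mixed signs (for example $|C_1|=2$, $\Delta_1=1$, $|C_2|=1$, $\Delta_2=-1$ makes the left side $1$ and the right side $0$). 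Your decomposition $|C_u|=c+\varepsilon_u$ separates the block-size fluctuation from the principal term $c\sum_u\Delta_u$, and the latter is now bounded by a legitimate single application of the cut norm on the interval $T_0$.

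There is one real gap, though, in your treatment of the correction piece $\sum_{u\in T_0}\varepsilon_u\Delta_u$: you apply the cut-norm bound to $T_1=\{u\in T_0:\varepsilon_u=1\}$, which requires $T_1$ to be an interval, and you justify this by ``choosing the canonical equitable partition'' with all $(c{+}1)$-blocks contiguous. That choice is not actually available. Lemma~\ref{lemaLS.5.1} insists that the partitions be nested ($P_{m,j}$ refines $P_{m,i}$ for $i<j$), and its construction produces $P_{m,j}$ by subdividing each cell of $P_{m,j-1}$ equitably; this recursive subdivision interleaves the two block sizes even when the coarser partition is canonical, so $T_1$ need not be an interval. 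Fortunately the gap is easy to close without any structural hypothesis on the partition: one has $|\Delta_u|\leq 1$ by monotonicity, and $|T_1|\leq n-ck_j = n\bmod k_j < k_j$, so the correction piece contributes at most $k_j/n$ to $d_{\square}(R_{m,j},S_{m,j})$. The construction inside Lemma~\ref{lemaLS.5.1} guarantees $|\sigma'_m|>4k_j^2$ (so that the $Q_{m,j}$ are weighted permutations) and $k_j>8j^2$ (via the $k_0(\eps)$ of Lemma~\ref{prop_weak}), whence $k_j/n<1/(4k_j)<1/j$. Your budget of $2/j+2/k_j+2/|\sigma'_m|$ for $d_{\square}(R_{m,j},S_{m,j})$ therefore survives, and with it the claim. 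After this patch your argument is complete, and is tighter than the proof printed in the paper.
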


Let $Z_{m,j}$ and $Z_j$ be the step functions (see equation~(\ref{eqdefZ})) associated with the weighted permutations $S_{m,j}$ and $Q_j$, respectively.
\begin{claim}\label{claim_UB2}
Given $\eps>0$ and a permutation $\tau$, if $m>j$ are both sufficiently large, then $$\displaystyle{\Big|t(\tau,Z_{m,j})-t(\tau,Z_j)\Big| < \frac{\eps}{8}}.$$
\end{claim}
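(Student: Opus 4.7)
The plan is to compare $Z_{m,j}$ and $Z_j$ directly, exploiting that both are step functions built from the same matrix $Q_j$: the former lives on the refined $n_m\times n_m$ grid of $[0,1]^2$ (where $n_m=|\sigma'_m|$) and inherits its entries from $Q_j$ via the blow-up $S_{m,j}=\mathcal{K}(P_{m,j},Q_j)$, while the latter lives on the uniform $k_j\times k_j$ grid. Writing $\ell=|\tau|$, I would use Definition~\ref{def.LtauZ} to expand both $L_{\tau,Z_j}(x)$ and $L_{\tau,Z_{m,j}}(x)$ as sums over atomic combinations and show the sums coincide pointwise on a large subset of $[0,1]^\ell$.

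First I would record the atomic structure of the relevant measures. For any $x\in(0,1]$, both $dZ_j(x,\cdot)$ and $dZ_{m,j}(x,\cdot)$ are purely atomic with exactly $k_j$ atoms, indexed by $s\in[k_j]$, whose masses coincide and equal $Q_j(r,s)-Q_j(r,s-1)$ (with the convention $Q_j(r,0):=0$), where $r$ denotes the column-block of $x$ in the respective grid: $r=\lceil k_j x\rceil$ for $Z_j$, and the unique $r'$ with $\lceil n_m x\rceil\in C_{r'}$ for $Z_{m,j}$. Their locations, however, differ: the $s$-th atom sits at $(s-1)/k_j$ for $Z_j$ and at $\alpha_s/n_m$ for $Z_{m,j}$, where $\alpha_s=\sum_{r'<s}|C_{r'}|$. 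Since both location sequences are strictly increasing in $s$, the strict-ordering constraint $y_1<\cdots<y_\ell$ in $L_{\tau,Z}(x)$ reduces to the same combinatorial constraint $s_1<\cdots<s_\ell$ in either case, yielding the common expansion
$$L_{\tau,Z}(x)=\sum_{(s_1,\ldots,s_\ell)\in[k_j]^\ell_<}\prod_{i=1}^\ell\bigl(Q_j(r_{\tau^{-1}(i)},s_i)-Q_j(r_{\tau^{-1}(i)},s_i-1)\bigr),$$
where the column-block $r_{\tau^{-1}(i)}$ is read off from $Z_j$ or $Z_{m,j}$ according to context.

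Next I would define the good set $G\subseteq[0,1]^\ell$ of tuples $x=(x_1,\ldots,x_\ell)$ for which the two block-assignments agree in every coordinate. Since $P_{m,j}$ is equitable, each increment $|C_r|/n_m-1/k_j$ has magnitude at most $1/n_m$, so the deviations $\delta_s:=\alpha_s/n_m-(s-1)/k_j$ satisfy $\delta_1=\delta_{k_j+1}=0$ together with $|\delta_{s+1}-\delta_s|\leq 1/n_m$, and therefore $|\delta_s|\leq k_j/(2n_m)$ uniformly in $s$. It follows that the one-dimensional column-bad set has Lebesgue measure at most $k_j^2/n_m$ and consequently $\lambda([0,1]^\ell\setminus G)\leq \ell k_j^2/n_m$ by a union bound. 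On $G$ the displayed expansion gives $L_{\tau,Z_{m,j}}(x)=L_{\tau,Z_j}(x)$; off $G$ both $L$-values lie in $[0,1]$ (as masses of subsets of products of sub-probability measures), so the integrand is bounded by $1$.

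Combining these observations,
$$|t(\tau,Z_{m,j})-t(\tau,Z_j)|\ \leq\ \ell!\cdot\lambda([0,1]^\ell_<\setminus G)\ \leq\ \frac{\ell\cdot\ell!\cdot k_j^2}{n_m}.$$
Since $k_j$ is determined by $j$ alone and $n_m\geq m\to\infty$, fixing $j$ large enough that $k_j\geq\ell$ (so the sum over $\ell$-tuples is non-degenerate) and then $m>j$ large enough that $n_m>8\ell\cdot\ell!\cdot k_j^2/\eps$ yields the required bound $<\eps/8$. The main technical hurdle I foresee is the atomic bookkeeping in the first step: confirming that the two measures match atom-by-atom under a common indexing by $s\in[k_j]$, with identical masses despite their atoms sitting at slightly different $y$-coordinates, and verifying that the monotonicity of both location sequences is what makes the strict-ordering constraint $s_1<\cdots<s_\ell$ identical across the two expansions.
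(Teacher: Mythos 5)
Your proof is correct in substance and takes a genuinely different route from the paper's. The paper's argument passes through the discrete densities $t(\tau,S_{m,j})$ and $t(\tau,Q_j)$: it invokes Lemma~\ref{lema_auxiliar} twice (to relate $t(\tau,Z_{m,j})$ to $t(\tau,S_{m,j})$ and $t(\tau,Z_j)$ to $t(\tau,Q_j)$), and then compares the two finite sums from Definition~\ref{def.dens.perm2} term by term, noting that each term for $Q_j$ corresponds to roughly $(|\sigma_m'|/k_j)^{|\tau|}$ identical terms for $S_{m,j}$ and using the limit $\binom{|\sigma_m'|}{|\tau|}^{-1}\binom{k_j}{|\tau|}(|\sigma_m'|/k_j)^{|\tau|}\to 1$. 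Your argument instead compares the core functions $L_{\tau,Z_{m,j}}$ and $L_{\tau,Z_j}$ pointwise, observing that after indexing the atoms of the two purely-atomic column measures by the common block parameter $s$, the masses agree and the strict-ordering constraint reduces in both cases to $s_1<\cdots<s_\ell$, so the $L$-values coincide exactly off a small ``bad'' set. This is cleaner: it avoids Lemma~\ref{lema_auxiliar} entirely, and it gives the stronger conclusion that the bound holds for every fixed $j$ once $m$ is large enough (the paper's route genuinely needs $k_j\to\infty$ as well, because the two applications of Lemma~\ref{lema_auxiliar} need both $n_m$ and $k_j$ large, and the binomial ratio only tends to $1$ when $k_j\to\infty$).

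One small bookkeeping slip: for a fixed $x$ the measure $dZ_j(x,\cdot)$ (resp.\ $dZ_{m,j}(x,\cdot)$) is a \emph{probability} measure on $[0,1]$, so beyond the $k_j$ atoms at $(s-1)/k_j$ (resp.\ $\alpha_s/n_m$) with masses $Q_j(r,s)-Q_j(r,s-1)$ for $s\in[k_j]$, there is in general an additional atom at $y=1$ of mass $1-Q_j(r,k_j)$, since $Z_{Q}(x,1)=Q(\lceil kx\rceil,k)$ need not equal $1$. This does not affect your argument --- the extra atom sits at the same location $y=1$ for both step functions and carries the same mass whenever the column-blocks agree, so the strictly-increasing indexing simply runs over $s\in[k_j+1]$ rather than $[k_j]$ --- but the count of ``exactly $k_j$ atoms'' should be $k_j+1$, and the claimed common expansion of $L_{\tau,Z}(x)$ should sum over $(s_1,\ldots,s_\ell)\in[k_j+1]^\ell_<$.
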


The proofs of these claims are given in Appendix~\ref{sec.claim.proofs}.

Our next result establishes the convergence of the sequence of functions $(Z_j)_{j \in \mathbb{N}}$ to a limit permutation. We use the concept of \emph{weak convergence} $F_n \overset{w}{\longrightarrow} F$ of a sequence of functions $(F_n)_{n \in \mathbb{N}}$ to a function $F$. This is pointwise convergence of $F_n$ to $F$ at all points at which $F$ is continuous. The use of this type of convergence is convenient because, as a consequence of classical theorems such as the Helly--Bray Theorem and Alexandrov's Pormanteau Theorem (see Section~\ref{provaLS.5.2} for statements and references), the following holds: given a sequence $(F_n)_{n \in \mathbb{N}}$ of cumulative density functions that converges weakly to a cumulative density function $F$, the limit, as $n$ tends to infinity, of the integrals of a given function $g$ over the measures induced by the $F_n$ is equal to the integral of $g$ over the measure induced by $F$.  
\begin{lemma}\label{lemaLS.5.2}
Let $(k_m)_{m \in \mathbb{N}}$ be a sequence of positive integers such that $\lim_{m \to \infty}k_m=\infty$ and let $(Q_m)_{m \in \mathbb{N}}$ be a sequence of weighted permutations $Q_m:[k_m]^2 \to [0,1]$
satisfying the conditions of Lemma~\ref{lemaLS.5.1}.
Then there exists a limit permutation $Z:[0,1]^2\to[0,1]$ such that,  
\begin{itemize}
\item[(i)] for almost all $(x,y)\in[0,1]^2$, $Z_{Q_m}(x,y)\longrightarrow Z(x,y)$ as $m$ tends to infinity;
\item[(ii)] for almost all $x\in[0,1]$, $Z_{Q_m}(x,\cdot)\overset{w}{\longrightarrow} Z(x,\cdot)$  as $m$ tends to infinity;
\item[(iii)] for every $m$ and $i,j\in[k_m]$,
$$Q_m(i,j) = k_m^2\int_{(i-1)/k_m}^{i/k_m}\int_{(j-1)/k_m}^{j/k_m}Z(x,y)~dx ~dy.$$
\end{itemize}
\end{lemma}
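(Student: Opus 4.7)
The plan is to read the divisibility and averaging conditions of Lemma~\ref{lemaLS.5.1}(i) as encoding a martingale on $([0,1]^2,\lambda)$ and deduce the entire lemma from Doob's convergence theorem. Let $\mathcal{F}_m$ be the $\sigma$-algebra generated by the partition of $[0,1]^2$ into $k_m^2$ axis-parallel squares of side $1/k_m$; because $k_j\mid k_m$ for $j<m$, this is a filtration. Each $Z_{Q_m}$ is $\mathcal{F}_m$-measurable with values in $[0,1]$, and the identity $Q_j=\widehat{Q}_{m,j}$ translates directly into $\mathbb{E}[Z_{Q_m}\mid\mathcal{F}_j]=Z_{Q_j}$. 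Doob's theorem therefore yields a Lebesgue measurable $Z^\ast:[0,1]^2\to[0,1]$ with $Z_{Q_m}\to Z^\ast$ almost everywhere and in $L^1$, which already proves (i) for $Z^\ast$. For (iii) applied to $Z^\ast$, the martingale property yields $k_m^2\int_{B_{ij}}Z_{Q_\ell}\,dx\,dy=Q_m(i,j)$ whenever $\ell\geq m$, where $B_{ij}$ is the $(i,j)$-th square of $\mathcal{F}_m$; letting $\ell\to\infty$ and invoking bounded convergence transfers the identity to $Z^\ast$.

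To upgrade $Z^\ast$ to a genuine limit permutation, I need $Z(x,\cdot)$ to be a bona fide cdf for \emph{every} $x$, not merely for almost every $x$. Choose a countable dense set $D\subseteq[0,1]$ such that for each $s\in D$ the convergence $Z_{Q_m}(x,s)\to Z^\ast(x,s)$ holds for $x$ in a full-measure set (Fubini makes this possible). Intersecting over $D$, there is a full-measure set $X\subseteq[0,1]$ on which the convergence holds for every $s\in D$ simultaneously, and consequently $s\mapsto Z^\ast(x,s)$ is non-decreasing on $D$ for each $x\in X$. Define
\[
Z(x,y)\;=\;\sup\{Z^\ast(x,s):s\in D\cap[0,y)\}\quad(y>0),\qquad Z(x,0)=0,
\]
for $x\in X$, and $Z(x,y)=y$ for $x\notin X$. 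Then $Z$ is Lebesgue measurable (a countable supremum of measurable functions, glued across a null set), $Z(x,\cdot)$ is non-decreasing and left-continuous by construction, and $Z(x,\cdot)$ coincides with $Z^\ast(x,\cdot)$ at every point of left-continuity through $D$---hence off the at-most countable jump set of $Z^\ast(x,\cdot)$. In particular $Z=Z^\ast$ almost everywhere, so (i) and (iii) are preserved.

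The remaining task is to verify the defining properties of a limit permutation together with (ii). Applying the mass condition of $Q_m$ at $y=j/k_m$ gives $|\!\int_0^1 Z_{Q_m}(x,y)\,dx-y|\leq 1/k_m$; choosing such $y$ in $D$, bounded convergence yields $\int_0^1 Z(x,y)\,dx=y$ on a dense set of $y$. The function $y\mapsto\int_0^1 Z(x,y)\,dx$ is left-continuous in $y$ by dominated convergence (owing to the left-continuity of $Z(x,\cdot)$), so this identity extends to every $y\in[0,1]$, which is condition (b). Condition (a) is then routine: $\int Z(x,1)\,dx=1$ combined with $Z(x,\cdot)\leq 1$ gives $Z(x,1)=1$ a.e., and monotone convergence applied to $\int Z(x,y)\,dx=y\to 0$ gives $\lim_{y\downarrow 0}Z(x,y)=0$ a.e.; one redefines $Z(x,\cdot)$ as the identity on the resulting null set of $x$. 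Finally, (ii) follows for each $x\in X$: the convergence $Z_{Q_m}(x,s)\to Z(x,s)$ for every $s\in D$ promotes to convergence at every continuity point of $Z(x,\cdot)$ by sandwiching $y$ between nearby points of $D$ and using the monotonicity of $Z_{Q_m}(x,\cdot)$, which is exactly weak convergence in the sense used by the paper. The principal obstacle is the middle paragraph: converting the a.e.\ martingale limit into a section-wise cdf whose block-integral identity and mass condition hold for \emph{every} $y\in[0,1]$, not merely for almost every $y$.
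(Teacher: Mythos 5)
Your proposal is correct and follows essentially the same route as the paper: an a.e.\ martingale limit $Z^\ast$ extracted from the divisibility/averaging structure of Lemma~\ref{lemaLS.5.1}(i) (the paper invokes Lemma~5.3 of Lov\'asz--Szegedy for the martingale property rather than writing out the filtration, and packages the section-wise reconstruction as Lemma~\ref{rud.Zweak}), followed by rebuilding $Z(x,\cdot)$ as the left-continuous monotone regularization of $Z^\ast(x,\cdot)$ through a countable dense set, which yields (ii) by the standard sandwiching argument and gives (iii) from the block-average (martingale) identity plus bounded convergence. The one point where you genuinely diverge is the mass condition: the paper verifies $\int_0^1 Z(x,y)\,dx=y$ by contradiction, sandwiching via property (iii) and the column-sum bounds on $Q_m$, whereas you pass the estimate $\bigl|\int_0^1 Z_{Q_m}(x,y)\,dx-y\bigr|\le 1/k_m$ to the limit on a dense set of $y$ and extend by left-continuity; your version is shorter and, to my mind, cleaner. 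One small repair is needed in that step: you cannot arrange for the grid points $j/k_m$ to lie in the Fubini-good set (they form a fixed null set of $y$'s, on which the a.e.\ convergence in $x$ is not guaranteed), but this is harmless because the bound $\bigl|\int_0^1 Z_{Q_m}(x,s)\,dx-s\bigr|\le 1/k_m$ in fact holds for \emph{every} $s\in(0,1]$ (as $Z_{Q_m}(\cdot,s)$ is constant on each cell $((j-1)/k_m,\,j/k_m]$ and $\sum_i Q_m(i,j)\in[j-1,j]$), so you may instead let $s$ range over the dense, full-measure set of good $y$'s.
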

The proof of this result is postponed to Section~\ref{provaLS.5.2}. An important property of the limit obtained in Lemma~\ref{lemaLS.5.2} is that  subpermutation density is continuous under it, as stated in the next lemma, whose proof is in Section~\ref{provalem.QtoZ}.
\begin{lemma}\label{lem.QtoZ}
Let $(k_m)_{m \in \mathbb{N}}$ be a sequence of positive integers such that $\lim_{m \to \infty}k_m=\infty$ and let $(Q_m)_{m \in \mathbb{N}}$ be weighted permutations with the properties of Lemma~\ref{lemaLS.5.2}.
Then the function $Z$ given by Lemma~\ref{lemaLS.5.2} satisfies
$$\lim_{n \to \infty} t(\tau,Q_n)=t(\tau,Z) \textrm{ for every permutation }\tau.$$
\end{lemma}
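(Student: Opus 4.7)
The strategy is to reduce to a statement about the step functions $Z_{Q_n}$ and then establish almost everywhere convergence of the core functions. Let $m=|\tau|$. By Lemma~\ref{lema_auxiliar} applied to each $Q_n$, we have $|t(\tau,Q_n)-t(\tau,Z_{Q_n})|\to 0$ as $n\to\infty$. Hence it suffices to prove $t(\tau,Z_{Q_n})\to t(\tau,Z)$, which, by Definition~\ref{def.dens.subperm.lim}, amounts to showing
\[
  \int_{[0,1]^m_<} L_{\tau,Z_{Q_n}}(x)\,dx\ \longrightarrow\ \int_{[0,1]^m_<} L_{\tau,Z}(x)\,dx.
\]

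For the pointwise convergence $L_{\tau,Z_{Q_n}}(x)\to L_{\tau,Z}(x)$ almost everywhere, fix $x=(x_1,\ldots,x_m)$ and let $\mu_i^{(n)}$ and $\mu_i$ denote the Lebesgue--Stieltjes measures associated with the cdfs $Z_{Q_n}(x_{\tau^{-1}(i)},\cdot)$ and $Z(x_{\tau^{-1}(i)},\cdot)$, respectively. Then $L_{\tau,Z_{Q_n}}(x)$ and $L_{\tau,Z}(x)$ are the masses of the open simplex $S=\{y_1<\cdots<y_m\}\subset[0,1]^m$ under the product measures $\mu_1^{(n)}\times\cdots\times\mu_m^{(n)}$ and $\mu_1\times\cdots\times\mu_m$. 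Lemma~\ref{lemaLS.5.2}(ii) combined with Fubini's theorem ensures that, for a.e.\ $x$, the marginal weak convergences $Z_{Q_n}(x_{\tau^{-1}(i)},\cdot)\overset{w}{\longrightarrow} Z(x_{\tau^{-1}(i)},\cdot)$ hold simultaneously for every $i\in[m]$. Testing against product continuous functions, this propagates to weak convergence of the product measures on $[0,1]^m$.

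The main obstacle is to verify the boundary hypothesis of Portmanteau's theorem, namely that $(\mu_1\times\cdots\times\mu_m)(\partial S)=0$ for a.e.\ $x$. The topological boundary $\partial S$ inside $[0,1]^m$ is contained in $\bigcup_{i<j}\{y_i=y_j\}$, and its product mass is bounded by $\sum_{i<j}\sum_y \mu_i(\{y\})\mu_j(\{y\})$, where the inner sum runs over common atoms of the two marginals. To show this vanishes, I would exploit the mass condition of Definition~\ref{def.perm.lim}(b): the identity $\int_0^1 Z(x,y)\,dx=y$ depends continuously on $y$, so dominated convergence gives $\int_0^1\big(Z(x,y_0^+)-Z(x,y_0)\big)\,dx=0$ for every $y_0\in[0,1]$. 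Since the integrand is non-negative, $Z(x,\cdot)$ has no atom at $y_0$ for almost every $x$. Because each cdf $Z(x_1,\cdot)$ has at most countably many atoms, a union bound shows that for a.e.\ $x_2$, the cdfs $Z(x_1,\cdot)$ and $Z(x_2,\cdot)$ share no atom; applied pairwise via Fubini, for almost every $x\in[0,1]^m$ no two of the marginals $\mu_i$ share an atom, yielding $(\mu_1\times\cdots\times\mu_m)(\partial S)=0$ as required.

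Combining Portmanteau with this boundary estimate gives $L_{\tau,Z_{Q_n}}(x)\to L_{\tau,Z}(x)$ for a.e.\ $x\in[0,1]^m_<$. Since these core functions are bounded by $1$, the dominated convergence theorem yields the desired convergence of integrals, completing the proof.
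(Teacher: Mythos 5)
Your proposal is correct and follows essentially the same route as the paper: the paper also establishes $L_{\tau,Z_{Q_n}}\to L_{\tau,Z}$ almost everywhere via weak convergence of the product measures (Multivariate Helly--Bray) together with Alexandrov's Portmanteau Theorem, verifying the boundary condition exactly as you do by showing that almost every pair of cdfs $Z(x_i,\cdot)$, $Z(x_j,\cdot)$ shares no discontinuity (this is the paper's Lemma~\ref{Lema.DescNula} and Lemma~\ref{lema.critico}), and then concludes by dominated convergence. The only cosmetic difference is that you dispatch the discretization step by citing Lemma~\ref{lema_auxiliar}, whereas the paper redoes the corresponding Riemann-sum sandwich explicitly; both are valid.
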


An immediate consequence of this result is that, given $\eps>0$, there exists $j_0>0$ such that, for $j>j_0$, the step function $Z_j$ associated with $Q_j$ satisfies
\begin{equation}\label{claim_UB1}
\Big|t(\tau,Z_j)-t(\tau,Z) \Big| < \frac{\eps}{4}.
\end{equation}

To establish Theorem~\ref{teorema_principal1}, we prove that the function $Z:[0,1]^2\to [0,1]$ given by Lemma~\ref{lemaLS.5.2} is a limit of $(\sigma_n)_{n \in \mathbb{N}}$.

\begin{proof}[Proof of Theorem~\ref{teorema_principal1}] Let $(\sigma_n)_{n \in \mathbb{N}}$ be a convergent permutation sequence with $\lim_{n \to \infty} |\sigma_n|=\infty$. Let $\tau$ be a permutation and fix $\eps>0$. We wish to prove that
$$\Big|t(\tau,Z)-\lim_{n\to\infty}t(\tau,\sigma_n)\Big| \leq \varepsilon.$$

Select a subsequence $(\sigma_m')_{m \in \mathbb{N}}$ as in Lemma~\ref{lemaLS.5.1} and a corresponding limit $Z$ as in Lemma~\ref{lemaLS.5.2}. For positive integers $m>j$, the triangle inequality leads to
\begin{equation*}
\begin{split}
\Big|t(\tau,Z)-\lim_{n\to\infty}t(\tau,\sigma_n)\Big| \leq & \Big|t(\tau,\sigma'_m)-\lim_{n\to\infty}t(\tau,\sigma_n)\Big| + \Big|t(\tau,S_{m,j})-t(\tau,\sigma_m')\Big|\\
&+ \Big|t(\tau,Z_j)-t(\tau,S_{m,j})\Big| +  \Big|t(\tau,Z)-t(\tau,Z_{j})\Big|,
\end{split}
\end{equation*}
where $S_{m,j}$ and $Z_j$ are defined in the discussion preceding this proof. We now bound each of the terms on the right-hand side. Since $(\sigma_n)_{n \in \mathbb{N}}$ is convergent, the real sequence $(t(\tau,\sigma_n))_{n \in \mathbb{N}}$ converges. The subsequence $(t(\tau,\sigma'_m))_{m \in \mathbb{N}}$ must converge to the same limit, and we have
$$\Big|t(\tau,\sigma'_m)-\lim_{n\to\infty}t(\tau,\sigma_n)\Big| < \frac{\varepsilon}{4}$$
for $m>m_0>0$ sufficiently large.
Now, assuming that $|\sigma'_m|\geq m>\min\{j,k_j\}>m_0>56 |\tau|^2/\eps$, Lemma~\ref{lema.cut.subpermut} and Claim~\ref{eq13b} lead to
\begin{equation*}
\Big|t(\tau,Q_{\sigma'_m})-t(\tau,S_{m,j})\Big|
\leq 2|\tau|^2\cdot d_{\square}(Q_{\sigma'_m},S_{m,j})
\leq 2|\tau|^2 \left(\frac{3}{j}+\frac{2}{k_j}+\frac{2}{|\sigma_m'|}\right)
\leq \frac{\varepsilon}{4}.
\end{equation*}
Also, Lemma~\ref{lema_auxiliar} tells us that $|t(\tau,Z_{m,j})-t(\tau,S_{m,j})|<\eps/8$ for $m$ sufficiently large, which, combined with Claim~\ref{claim_UB2}, implies that
$$\Big|t(\tau,Z_j)-t(\tau,S_{m,j})\Big| \leq \Big|t(\tau,Z_j)-t(\tau,Z_{m,j})\Big|+\frac{\eps}{8}  < \frac{\varepsilon}{4}$$
for $m>j>m_0$ sufficiently large. Finally, equation~(\ref{claim_UB1}) states that
$$\Big|t(\tau,Z)-t(\tau,Z_{j})\Big| < \frac{\varepsilon}{4}$$
for $j>m_0$ sufficiently large. This concludes the proof of Theorem~\ref{teorema_principal1}. \end{proof}

\section{$Z$-random permutations}\label{sec_random}

In this section, we show that every limit permutation $Z:[0,1]^2\to[0,1]$ is a limit of a convergent permutation sequence. This establishes the remaining part of Theorem~\ref{teorema_principal}. To achieve this objective, limit permutations are used to define a model of random permutations, which we call \emph{$Z$-random permutations}. This resembles the concept of $W$-random graphs introduced by Lov\'{a}sz and Szegedy in~\cite{lovasz06}, defined for each fixed graphon $W$.

\begin{definition}[$Z$-random permutation]
Given a limit permutation $Z:[0,1]^2 \to [0,1]$ and a positive integer $n$, a \emph{$Z$-random permutation} $\sigma(n,Z)$
is a permutation of $[n]$ generated as follows. A set of $n$ pairs $(X_1,a_1),\ldots,(X_n,a_n)$ in $[0,1]^2$
is independently generated, where, for each $i$, $X_i$ is chosen uniformly and $a_i$ is chosen according to the probability
distribution induced by the cdf $Z(X_i,\cdot)$. These pairs define the functions $R,S:[n]\to[n]$,
where $R(i)= |\{j~:~X_j \leq X_i\}|$ and $S(i)= |\{j~:~a_j \leq a_i\}|$ for every $i$.
The random permutation is given by the composition $\sigma(n,Z)=S \cdot R^{-1}$, that is, $\sigma(n,Z)(i)=S(R^{-1}(i))$ for every $i\in[n]$.
\end{definition}

Observe that the functions $R$ and $S$ in the above definition captures the order of the components of $X$ and $a$, respectively. Indeed, the vector $(X_{R^{-1}(1)},\ldots,X_{R^{-1}(n)})$ consists of the components of $X$ ordered in increasing order. As an example, suppose that $n=4$, $X=(0.7,0.3,0.9,0.2)$ and $a=(0.8,0.1,0.5,0.3)$.
Then the permutations $R$ and $S$ are given by $R=(3,2,4,1)$ and $S=(4,1,3,2)$, so that $\sigma=S\cdot R^{-1}=(2,1,4,3)$. 
Note that $Z$-random permutations are well-defined with probability 1, because the probability that either $(X_1,\ldots,X_n)$ or $(a_1,\ldots,a_n)$ has repeated elements is zero. For the former, this is by definition of the uniform
probability space and, for the latter, this is a consequence of the forthcoming Lemma~\ref{lema_uniform}.

Observe that, as mentioned in the introduction, one could alternatively generate $Z$-random permutations by first generating a sequence $X_1<\cdots<X_n$ uniformly in $[0,1]^n_<$
and then drawing each $a_i$ in $[0,1]$ independently according to the probability distribution induced by $Z(X_i,\cdot)$.
The random permutation is given by the order of the elements in $(a_1,\ldots,a_n)$.

\begin{theorem}[$t(\tau,\sigma(n,Z))$ converges to $t(\tau,Z)$]\label{rud.teoZPerm}
Let $Z$ be a limit permutation and let $n\leq m$ be positive integers. For a permutation $\tau:[m]\to[m]$ and a constant $\eps>0$, we have
\begin{itemize}
\item[(a)] $\EE\Big(t(\tau,\sigma(n,Z))\Big)\ =\ t(\tau,Z),$
\item[(b)] $\PP\Big(\Big|t(\tau,\sigma(n,Z))-t(\tau,Z)\Big|>\varepsilon\Big)\ \leq\ 2\exp\{-\varepsilon^2n/2m^2\}.$
\end{itemize}
\end{theorem}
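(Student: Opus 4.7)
The plan is to handle the two claims separately, using independence and symmetry for (a) and a standard bounded-differences argument for (b).

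For part (a), I would first expand the density by its definition: writing $\Lambda(\tau,\sigma(n,Z))$ as the sum of indicators over $m$-subsets $I\subset[n]$ that $I$ induces the pattern $\tau$, the i.i.d.\ structure of the pairs $(X_i,a_i)$ makes each indicator identically distributed. Hence
\[
  \EE\bigl[t(\tau,\sigma(n,Z))\bigr] = \PP\bigl[\sigma(m,Z)=\tau\bigr],
\]
and it suffices to show that the right-hand side equals $t(\tau,Z)$. To compute $\PP[\sigma(m,Z)=\tau]$, I would note that, almost surely, both the $X_i$'s (uniform i.i.d.) and the $a_i$'s (by the forthcoming Lemma~\ref{lema_uniform}) are pairwise distinct, so the pattern is well defined. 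Since the joint distribution of the pairs is invariant under index permutations, the probability can be computed by first ordering the $X$'s: we integrate $x=(x_1,\ldots,x_m)$ over the simplex $[0,1]^m_<$ with the normalizing factor $m!$. Conditioned on the ordered $X=x$, the event $\sigma(m,Z)=\tau$ is exactly the event $a_{\tau^{-1}(1)}<a_{\tau^{-1}(2)}<\cdots<a_{\tau^{-1}(m)}$; and since the $a_i$ are conditionally independent with $a_i\sim Z(x_i,\cdot)$, this conditional probability is precisely
\[
  \int_{[0,1]^m_<} dZ(x_{\tau^{-1}(1)},\cdot)\cdots dZ(x_{\tau^{-1}(m)},\cdot) = L_{\tau,Z}(x).
\]
Integrating against the uniform measure on $[0,1]^m_<$ and multiplying by $m!$ matches Definition~\ref{def.dens.subperm.lim} exactly, yielding $\PP[\sigma(m,Z)=\tau]=t(\tau,Z)$.

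For part (b), I would apply a bounded-differences (McDiarmid / Azuma--Hoeffding) inequality to the function $f\bigl((X_1,a_1),\ldots,(X_n,a_n)\bigr)=t(\tau,\sigma(n,Z))$, which is a function of the $n$ independent pairs. Replacing a single pair $(X_i,a_i)$ can only alter those indicators $\mathds{1}[I \text{ induces }\tau]$ whose $m$-subset $I$ contains the index $i$; there are exactly $\binom{n-1}{m-1}$ such subsets, so
\[
  |\Delta t| \leq \frac{\binom{n-1}{m-1}}{\binom{n}{m}} = \frac{m}{n}.
\]
Thus $f$ has bounded differences $c_i=m/n$ and $\sum_{i=1}^n c_i^2 = m^2/n$. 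Azuma--Hoeffding then gives
\[
  \PP\bigl[|t(\tau,\sigma(n,Z))-\EE t(\tau,\sigma(n,Z))|>\eps\bigr]
  \leq 2\exp\!\left(-\frac{\eps^2}{2\,m^2/n}\right)
  = 2\exp\!\left(-\frac{\eps^2 n}{2m^2}\right),
\]
and combining with part (a) finishes the proof.

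I expect the only real obstacle to be the careful bookkeeping in the conditioning step for part (a): translating the joint law of the ordered tuple $(X,a)$ into the iterated Lebesgue--Stieltjes integral $L_{\tau,Z}$ requires invoking Fubini on the product measure $\mu_{Z(x_1,\cdot)}\times\cdots\times\mu_{Z(x_m,\cdot)}$, which in turn relies on the measurability of $L_{\tau,Z}$ established separately in Proposition~\ref{propQtoZ}. Once that is in hand, both parts are essentially routine applications of linearity of expectation, symmetry, and a standard concentration inequality.
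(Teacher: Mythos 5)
Your proof is correct and takes essentially the same approach as the paper: part (a) reduces to identifying the conditional probability, given the ordered $X$-values, that the $a_i$'s fall in the pattern prescribed by $\tau$ as the core function $L_{\tau,Z}(x)$, and part (b) is the same exposure-martingale argument with the $m/n$ bounded-differences estimate and Azuma's inequality. The only cosmetic difference is that you reduce to the case $n=m$ at the outset via exchangeability (so $\EE[t(\tau,\sigma(n,Z))]=\PP[\sigma(m,Z)=\tau]$), whereas the paper works with the events $A_\phi$ for all increasing $\phi:[m]\to[n]$ and averages; the underlying computation is identical.
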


\begin{proof} Let $\tau:[m]\to[m]$ and $n>m$ be fixed throughout the proof and let $X=(X_1,\ldots,X_n)$ be the random vector chosen in the definition of the random permutation $\sigma=\sigma(n,Z)$. Given an injective function $\phi:[m] \to [n]$ for which $\phi(1)<\cdots<\phi(m)$,
consider the event $A_{\phi}$ that the elements of $\sigma$ indexed by $\phi$ form a subpermutation $\tau$.
In other words $A_{\phi}$ is the event that, for every $i,j \in [m]$,
\begin{equation}\label{eq_aux_1}
\sigma(\phi(i))<\sigma(\phi(j)) \textrm{ if and only if }\tau(i) < \tau(j).
\end{equation}
With this, since $\Lambda(\tau,\sigma)$ denotes the number of occurrences of $\tau$ in $\sigma$, we have
$$\EE\Big(t(\tau,\sigma(n,Z))\Big)=\binom{n}{m}^{-1}\EE\Big(\Lambda(\tau,\sigma(n,Z))\Big) =\binom{n}{m}^{-1}\sum_\phi \EE(A_\phi) = \binom{n}{m}^{-1}\sum_\phi \PP(A_\phi).$$
To compute $\PP(A_\phi)$ consider $n$ i.i.d.\,uniform random variables $X_1,\ldots,X_n$ on $[0,1]$ and let $B_\phi$ be the event that the ranks of $X_1',\ldots,X'_m$ are $\phi(1),\ldots,\phi(m)$, respectively. If we condition on $B_\phi$, the distribution of $X_1',\ldots,X'_m$ is precisely the distribution of $X_{R^{-1}(\phi(1))},\ldots,X_{R^{-1}(\phi(m))}$. By the independence and the uniformity of our random choices, we have $\PP(B_\phi)=m!\binom{n}{m}$, so that
\begin{equation*}
\begin{split}
\PP(A_\phi)&=\EE\Big(L_{\tau,Z}\left(X_{R^{-1}(\phi(1))},\ldots,X_{R^{-1}(\phi(m))}\right)\Big)\\
&=\EE\Big(L_{\tau,Z}\left(X_{1}',\ldots,X_{m}'\right)~|~B_\phi\Big)=m!\binom{n}{m} \EE\Big(L_{\tau,Z}\left(X_{1}',\ldots,X_{m}'\right)\Big) \mathbf{1}(X_{1}'<\cdots<X_{m}'),
\end{split}
\end{equation*}
where $\mathbf{1}(A)$ denotes the indicator random variable for the event $A$.

Summing over all $\phi$ and using the fact that $\sum_\phi \mathbf{1}(B_\phi)= \mathbf{1}(X_{1}'<\cdots<X_{m}')$, we obtain
\begin{equation*}
\begin{split}
\binom{n}{m}^{-1} \sum_\phi \PP(A_\phi) &= m! \EE\Big(L_{\tau,Z}\left(X_{1}',\ldots,X_{m}'\right)\Big) \mathbf{1}(X_{1}'<\cdots<X_{m}')\\
&=\int_{x\in[0,1]^m_<}\Big(\int_{[0,1]^m_<} dZ(x_{\tau^{-1}(1)},\cdot)\cdots dZ(x_{\tau^{-1}(m)},\cdot)\Big)\ dx_1 \ldots dx_m=t(\tau,Z),
\end{split}
\end{equation*}
as required for part (a).

To prove part (b), we rely on exposure martingales, as defined by Alon and Spencer~(\cite{alon_spencer00}, Chapter 7).
The basic idea is to reveal the pairs of random choices $(X_1,a_1),\ldots,(X_n,a_n)$ one by one, analyzing the influence of
each new random choice on the number of occurrences of $\tau$ as a subpermutation of the final $Z$-random permutation $\sigma(n,Z)$.

Let $Y_0,\ldots Y_n$ be the random variables given by $Y_i=\EE\left(t(\tau,\sigma)\ |\ (X_1,a_1),\ldots,(X_i,a_i)\right)$.
In particular, by part (a), $Y_0=\EE\Big(t(\tau,\sigma)\Big)=t(\tau,Z)$. Note that, after revealing the first $i$ pairs
of random choices $(X_1,a_1),\ldots,(X_i,a_i)$, $i<n$, the actual values of the final permutation $\sigma(n,Z)$ are
still unknown. However, the relative order of the elements generated by these random choices is determined, so that
the number of occurrences of the subpermutation $\tau$ can be counted in an incremental way. Moreover, assuming that all pairs have been chosen, if any pair of random choices $(X_{i},a_{i})$ were assigned a new value, the number of occurrences of $\tau$ may vary by at most $\binom{n-1}{m-1}$. Thus, the variation $|Y_{i+1}-Y_i|$ is bounded above by $$\binom{n-1}{m-1} \cdot\binom{n}{m}^{-1}=\frac{m}{n}.$$
By Azuma's inequality (see for example~\cite{alon_spencer00}, Corollary 7.2.2), we have that, given $\eps>0$,
\[
  \PP\Big(\Big|t(\tau,\sigma(n,Z))-t(\tau,Z)\Big|\ >\ \varepsilon\Big)\ =\
  \PP\Big(\Big|Y_n-Y_0\Big|\ >\ \varepsilon\Big)
\]
\[
  \leq\ 2\exp\Big\{-\frac{\varepsilon^2}{2n(m/n)^2}\Big\}\ \leq\ 2\exp\Big\{\frac{-\varepsilon^2n}{2m^2}\Big\}.
\]
This concludes the proof of the theorem. \end{proof}

Let $Z$ be a limit permutation and $\tau$ be a permutation on $[m]$. Observe that, when we restrict Theorem~\ref{rud.teoZPerm}(a) to the case $n=m$, we obtain 
$$t(\tau,Z)=\EE\Big(t(\tau,\sigma(m,Z))\Big).$$
It is clear that $t(\tau,\sigma(m,Z))=1$ if $\tau=\sigma(m,Z)$, while $t(\tau,\sigma(m,Z))=0$ otherwise. In particular, we have
$$t(\tau,Z)=\EE\Big(t(\tau,\sigma(m,Z))\Big)=\PP\Big(\sigma(m,Z)=\tau\Big),$$
so that the permutation-based definition of subpermutation density given in the introduction coincides with Definition~\ref{def.dens.subperm.lim}.

The next corollary leads to the desired result.
\begin{cor}[$Z$-random permutations are convergent]\label{rud.corZPerm}
Given a limit permutation $Z:[0,1]^2\to[0,1]$, the permutation sequence $(\sigma(n,Z))_{n \in \mathbb{Z}}$ is convergent with probability one, and its limit is $Z$.
\end{cor}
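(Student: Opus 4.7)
The plan is to deduce the corollary from the concentration bound in Theorem~\ref{rud.teoZPerm}(b) together with the first Borel--Cantelli lemma, and then a countable union over all permutations $\tau$. Fix a permutation $\tau$ of length $m$. For each integer $k\geq 1$, define the event
\[
  E_{n,k}=\Big\{\Big|t(\tau,\sigma(n,Z))-t(\tau,Z)\Big|>1/k\Big\}.
\]
By part (b) of Theorem~\ref{rud.teoZPerm}, $\PP(E_{n,k})\leq 2\exp\{-n/(2k^2m^2)\}$, so $\sum_{n\geq 1}\PP(E_{n,k})<\infty$. Borel--Cantelli then yields $\PP(\limsup_n E_{n,k})=0$ for every fixed $k$.

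Intersecting these null events over the countable family $\{k:k\geq 1\}$ gives an almost sure event on which, for every $k$, all but finitely many of the $E_{n,k}$ fail. On this event, $\limsup_{n\to\infty}|t(\tau,\sigma(n,Z))-t(\tau,Z)|\leq 1/k$ for every $k$, and hence
\[
  \lim_{n\to\infty}t(\tau,\sigma(n,Z))\ =\ t(\tau,Z)\qquad\text{almost surely}.
\]

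Finally, the set $\mathcal{S}=\bigcup_{n\geq 1}S_n$ of all finite permutations is countable, so we intersect the above almost sure events over $\tau\in\mathcal{S}$. On the resulting event, which still has probability one, the real sequence $(t(\tau,\sigma(n,Z)))_{n\in\mathbb{N}}$ converges to $t(\tau,Z)$ for \emph{every} finite permutation $\tau$ simultaneously. By Definition~\ref{def_conv} this means the permutation sequence $(\sigma(n,Z))_{n\in\mathbb{N}}$ is convergent with probability one; and since the limiting value of $t(\tau,\sigma(n,Z))$ equals $t(\tau,Z)$ for every $\tau$, its limit in the sense of Theorem~\ref{teorema_principal1} is precisely $Z$.

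There is no real obstacle: the only modest point to verify is that a single almost sure event can be produced that works for all $\tau$ at once, which is exactly why the countability of $\mathcal{S}$ and the summability of the tail bound are invoked; everything else is a direct application of results already proved in the excerpt.
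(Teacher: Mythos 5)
Your proof is correct and takes essentially the same route as the paper: apply Theorem~\ref{rud.teoZPerm}(b), sum the tail bounds, invoke Borel--Cantelli, and then intersect the resulting almost sure events over a countable family. The only cosmetic difference is that you instantiate $\varepsilon=1/k$ and intersect over $k$ explicitly, whereas the paper simply notes ``since $\varepsilon>0$ is arbitrary''; both amount to the same thing.
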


\begin{proof} Fix $\varepsilon>0$ and a permutation $\tau:[m]\to[m]$.
For each positive integer $n$, let $A_n$ be the event that $\Big|t(\tau,\sigma(n,Z))-t(\tau,Z)\Big|>\varepsilon$.

Part (b) of Theorem~\ref{rud.teoZPerm} states that $\PP(A_n)\leq 2\exp\Big\{-\varepsilon^2n/2m^2\Big\}$, so that
\[
  \sum_{n=1}^{\infty}\PP\Big(A_n\Big)\ <\ \infty.
\]
By the Borel-Cantelli Lemma (see for example~\cite{loeve77}, Chapter V, \S 15.3), the set of values of $n$ for which the event $A_n$ occurs is finite with probability one.
Since $\varepsilon>0$ is arbitrary, we conclude that
\begin{equation}\label{eq_aux_2}
t(\tau,\sigma(n,Z))\rightarrow t(\tau,Z) \textrm{ with probability }1.
\end{equation}
Now, the set of all permutations $\tau$ is countable, hence~(\ref{eq_aux_2}) holds for every permutation $\tau$ with probability one, as required. \end{proof}

In particular, for every limit permutation $Z:[0,1]^2\to[0,1]$, there is a permutation sequence $(\sigma_n)_{n \in \mathbb{N}}$ convergent to it, namely a permutation sequence generated according to the model of $Z$-random permutations.

\section{Uniqueness}\label{sec_uniqueness}

This section is devoted to the proof of Theorem \ref{res_princ1}. The spirit of our proof is the same as in the case of graphs~\cite{borgs06b}, where uniqueness was captured by a natural pseudometric induced by a distance between graphons. We adapt this proof technique to the framework of permutations, as we introduce a metric  $d_{\square}$ in the set of limit permutations and we prove that two limit permutations are limits of the same permutation sequence if and only if $d_{\square}(Z_1,Z_2)=0$. This metric is just a generalization to limit permutations of the rectangular distance for permutations of Definition~\ref{def.rec}.

\begin{definition}[Rectangular distance between limit permutations]
Given functions $Z_1,Z_2:[0,1]^2\to[0,1]$ such that $Z(x,\cdot)$ is a cdf for every $x\in[0,1]$, the \emph{rectangular distance} between $Z_1$ and $Z_2$ is defined by
\begin{equation}
  d_{\square}(Z_1,Z_2)\ =\ \sup_{\substack{x_1<x_2\in [0,1]\\\alpha_1<\alpha_2\in[0,1]}}\
  \Big|\int_{x_1}^{x_2}\int_{\alpha_1}^{\alpha_2}dZ_1(x,\cdot)dx
  -\int_{x_1}^{x_2}\int_{\alpha_1}^{\alpha_2}dZ_2(x,\cdot)dx\Big|.
\end{equation}
\end{definition}

To simplify notation, we shall write
\[
  \int_{\alpha_1}^{\alpha_2}dZ_1(x,\cdot)dx
  -\int_{\alpha_1}^{\alpha_2}dZ_2(x,\cdot)dx =\ \int_{\alpha_1}^{\alpha_2}d\big(Z_1-Z_2\big)(x,\cdot)dx.
\]
It is not hard to see that, for permutations $\sigma_1$ and $\sigma_2$ on $[n]$, we have $d_{\square}(\sigma_1,\sigma_2)=d_{\square}(Q_{\sigma_1},Q_{\sigma_2})=d_{\square}(Z_{\sigma_1},Z_{\sigma_2})$, which allows us to extend the definition of rectangular distance to permutations on different sets of integers. Indeed, we may define $d_{\square}(\sigma,\pi):=d_{\square}(Z_{\sigma},Z_{\pi})$ for every pair of permutations $\sigma$ and $\pi$. Moreover, we may define the distance between a permutation $\sigma$ and a limit permutation $Z$ by $d_{\square}(\sigma,Z):=d_{\square}(Z_{\sigma},Z)$.

We shall demonstrate Theorem \ref{res_princ1} in the following form.
\begin{theorem}\label{cor.ida.e.volta}
The following properties hold for limit permutations $Z_1,Z_2:[0,1]^ 2\to[0,1]$.
\begin{itemize}

\item[(i)] $d_{\square}(Z_1,Z_2)=0$ if and only if $Z_1=Z_2$ almost everywhere.

\item[(ii)] A permutation sequence $(\sigma_n)_{n \in \mathbb{N}}$ converges to a limit permutation $Z$ if and only if
$$\lim_{n \to \infty} d_{\square}(Z_{\sigma_n},Z)=0.$$
\end{itemize}
In particular, if $(\sigma_n)_{n \in \mathbb{N}}$ is a permutation sequence converging to $Z_1$, then $Z_2$ is a limit to $(\sigma_n)_{n \in \mathbb{N}}$ if and only if $Z_1=Z_2$ almost everywhere.
\end{theorem}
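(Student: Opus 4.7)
The plan is to prove parts (i) and (ii) separately, and then deduce the ``In particular'' clause by combining them with the triangle inequality.

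For part (i), the ``if'' direction is a Fubini argument: if $Z_1=Z_2$ almost everywhere on $[0,1]^2$, then for almost every $x$ the slices $Z_1(x,\cdot)$ and $Z_2(x,\cdot)$ agree on a set of full measure, and the left-continuity of cdfs promotes this to pointwise equality everywhere, so every rectangle integral in the definition of $d_{\square}$ vanishes. The ``only if'' direction uses the Lebesgue differentiation theorem: $d_{\square}(Z_1,Z_2)=0$ forces
\[
\int_{x_1}^{x_2}\bigl[(Z_1-Z_2)(x,\alpha_2)-(Z_1-Z_2)(x,\alpha_1)\bigr]\,dx=0
\]
for every rectangle $[x_1,x_2]\times[\alpha_1,\alpha_2]$. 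Fixing $(\alpha_1,\alpha_2)$ and letting $[x_1,x_2]$ vary forces the integrand to vanish outside a null set depending on $(\alpha_1,\alpha_2)$; taking the union over a countable dense family of pairs and exploiting left-continuity of $Z_i(x,\cdot)$ in the second variable produces a single full-measure set of $x$ on which $Z_1(x,\alpha_2)-Z_1(x,\alpha_1)=Z_2(x,\alpha_2)-Z_2(x,\alpha_1)$ for all $\alpha_1<\alpha_2$; letting $\alpha_1\downarrow 0$ and using the normalization $Z_i(x,0)=0$ yields $Z_1=Z_2$ almost everywhere.

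For part (ii), I would first establish the limit-permutation analogue of Lemma~\ref{lema.cut.subpermut},
\[
|t(\tau,Z_1)-t(\tau,Z_2)|\leq C_m\,d_{\square}(Z_1,Z_2),
\]
with $C_m$ depending only on $m=|\tau|$, by telescoping the $m$ integrating factors $dZ(x_{\tau^{-1}(i)},\cdot)$ in the definition of $L_{\tau,Z}$: swap one factor at a time from $Z_1$ to $Z_2$ and bound each incremental error by the defining supremum of $d_{\square}$ on rectangles. With this continuity, the ``if'' direction of (ii) is immediate: $d_{\square}(Z_{\sigma_n},Z)\to 0$ implies $t(\tau,Z_{\sigma_n})\to t(\tau,Z)$, and Lemma~\ref{lema_auxiliar} transfers densities from $Z_{\sigma_n}$ to $\sigma_n$ with a vanishing error.

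The ``only if'' direction of (ii) is the main obstacle. The reduction is: assume $\sigma_n\to Z$ in density but $d_{\square}(Z_{\sigma_n},Z)\not\to 0$, pass to a subsequence with $d_{\square}(Z_{\sigma_{n_k}},Z)\geq\varepsilon$, and use Lemma~\ref{conv_subseq} together with Theorem~\ref{teorema_principal1} to extract a sub-subsequence $\sigma_{n_{k_j}}\to Z'$ in density for some limit permutation $Z'$ with $t(\tau,Z')=t(\tau,Z)$ for every $\tau$. The argument then rests on the key uniqueness lemma, \emph{if $t(\tau,Z_1)=t(\tau,Z_2)$ for every permutation $\tau$ then $d_{\square}(Z_1,Z_2)=0$,} which I would prove by coupling and concentration. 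Since $\PP(\sigma(m,Z_i)=\tau)=t(\tau,Z_i)$ and the densities coincide, the distributions of $\sigma(m,Z_1)$ and $\sigma(m,Z_2)$ coincide, so they can be coupled to a common random permutation $\pi_m$. A martingale concentration bound in the style of Theorem~\ref{rud.teoZPerm}(b), applied to the rectangle counts $|\pi_m(S)\cap T|$ (each of which changes by $O(1)$ under a single modification of the underlying pair $(X_i,a_i)$) and combined with a union bound over the $O(m^4)$ choices of interval pairs, yields $d_{\square}(Z_{\pi_m},Z_i)\to 0$ in probability for $i=1,2$; the triangle inequality then forces $d_{\square}(Z_1,Z_2)=0$. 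To close the ``only if'' argument I would then apply the same coupling/sampling strategy to tie each $\sigma_{n_{k_j}}$ to a $Z$-random permutation: a sampling lemma, saying that sufficiently large random subpermutations of $\sigma_{n_{k_j}}$ have distribution close to $\sigma(m,Z)$ by the density hypothesis, couples the two and passes through the continuity estimate to yield $d_{\square}(Z_{\sigma_{n_{k_j}}},Z)\to 0$, contradicting $d_{\square}(Z_{\sigma_{n_{k_j}}},Z)\geq\varepsilon$. Finally, the ``In particular'' conclusion is immediate: two density-limits $Z_1,Z_2$ of $(\sigma_n)$ satisfy $d_{\square}(Z_1,Z_2)\leq d_{\square}(Z_{\sigma_n},Z_1)+d_{\square}(Z_{\sigma_n},Z_2)\to 0$ by (ii), so (i) gives $Z_1=Z_2$ almost everywhere; the converse is trivial because almost-everywhere equal functions induce the same measures and hence the same subpermutation densities.
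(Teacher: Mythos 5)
Your proposal is essentially correct and its overall architecture matches the paper's, but one ingredient is handled in a genuinely different way. For part (i) you argue as the paper does (the paper picks the convenient special case $y_2=1$ so that $\lim_{y\to 1^+}Z_i(x,y)=1$ cancels, whereas you let $\alpha_1\downarrow 0$ and use $Z_i(x,0)=0$; these are interchangeable, though note the rectangle integral produces $\lim_{y\to\alpha_2^+}Z_i(x,y)$ rather than $Z_i(x,\alpha_2)$, a point you elide but which is easy to patch via a countable dense set of $\alpha_2$'s together with left-continuity). The real divergence is in the ``if'' half of (ii). The paper does not state or use the quantitative Lipschitz estimate $|t(\tau,Z_1)-t(\tau,Z_2)|\leq C_m\,d_{\square}(Z_1,Z_2)$; instead it proves only the qualitative statement $d_{\square}(Z_1,Z_2)=0\Rightarrow t(\tau,Z_1)=t(\tau,Z_2)$ (this is its fact~(c), obtained by the same telescoping of the $m$ integrating measures that you propose, but read off only at distance zero), and then deduces the ``if'' direction of (ii) indirectly via subsequence extraction, the ``only if'' direction, fact~(c), and sequential compactness (Lemma~\ref{conv_subseq}). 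Your version quantifies the telescoping: each increment $Y_k-Y_{k-1}$ isolates a single rectangle integral $\int\int d(Z_1-Z_2)(x_j,\cdot)\,dx_j$ which is bounded by $d_{\square}(Z_1,Z_2)$, and the remaining product measure over the $(m-1)$-simplex contributes at most $1/(m-1)!$, giving $C_m\leq m^2$ (matching the discrete Lemma~\ref{lema.cut.subpermut}). This makes the ``if'' direction of (ii) self-contained and independent of the ``only if'' direction — a cleaner route than the paper's, at the modest cost of carrying out the telescoping quantitatively. For the ``only if'' direction your core argument (sampling lemma, coupling, Azuma-type concentration) is exactly the paper's machinery (Lemmas~\ref{lema.amostra},~\ref{cor.amostra2}, Proposition~\ref{prop_limit2}), but your outline contains a superfluous detour: once you propose to couple $\sub(m,\sigma_{n_k})$ to $\sigma(m,Z)$ and push through the sampling estimates to get $d_{\square}(Z_{\sigma_{n_k}},Z)\to 0$ directly, the preliminary extraction of a sub-subsequence with an ``abstract'' limit $Z'$ from Theorem~\ref{teorema_principal1} and the uniqueness lemma relating $Z'$ to $Z$ are not used (and indeed they cannot be used, since nothing a priori relates $Z_{\sigma_{n_k}}$ to $Z'$ in rectangular distance — that relation is precisely the statement being proved). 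Pruning that detour would make your argument for the ``only if'' direction coincide with the paper's.
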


\begin{proof}
We start with part (i). It is not hard to see that, if $Z_1$ and $Z_2$ are equal almost everywhere, then their rectangular distance is zero. For the converse, let $Z_1$ and $Z_2$ be limit permutations such that $d_{\square}(Z_1,Z_2)=0$. This implies that, for every rectangle $R=[x_1,x_2] \times [y_1,y_2] \in [0,1]^2$, we have
\begin{equation}\label{eqaux3}
\int_{x_1}^{x_2} \int_{y_1}^{y_2} dZ_1(x,\cdot) dx = \int_{x_1}^{x_2}
\int_{y_1}^{y_2} dZ_2(x,\cdot) dx.
\end{equation}
By definition of the measure induced by $Z_1(x,\cdot)$, the inner integral on the left-hand side of the previous equation is equal to
$$\int_{y_1}^{y_2} dZ_1(x,\cdot) dx= \lim_{y \to y_2^+} Z_1(x,y) - Z_1(x,y_1),$$
where, to avoid considering different cases, we make the convention that $\lim_{y \to 1^{+}} Z_1(x,y)=1$ for $y_2=1$. An analogous expression can be given for the inner integral on the right-hand side of the previous equation. Thus, rearranging the terms in equation (\ref{eqaux3}), we obtain
$$
\int_{x_1}^{x_2} \left( \lim_{y \to y_2^+} Z_1(x,y)- \lim_{y \to y_2^+}
Z_2(x,y)\right)~dx
= \int_{x_1}^{x_2} \left(Z_1(x,y_1)-Z_2(x,y_1) \right)~dx
$$
for every choice of $y_1 \leq y_2$ and $x_1 \leq x_2$. In particular, we may take $y_2=1$ and deduce that, given $y_1 \in [0,1]$,
$$\int_{x_1}^{x_2} \left(Z_1(x,y_1)- Z_2(x,y_1)\right) dx=0$$
for every interval $I=[x_1,x_2] \subseteq [0,1]$. As the function $f_{y_1}:x \to Z_1(x,y_1)- Z_2(x,y_1)$ is Lebesgue measurable and bounded in $[0,1]$, this implies that $Z_1(x,y_1)=Z_2(x,y_1)$ for almost all $x \in [0,1]$.

Now, given $\eps>0$, let $A(\eps)=\{ (x,y) : |Z_1(x,y)-Z_2(x,y)| \geq \eps \}$. This set is Lebesgue measurable in $[0,1]^2$, as $Z_1-Z_2$ is measurable. Moreover, we know from our previous work that, for any fixed $y \in [0,1]$, the vertical section $A^y(\eps)=\{x:(x,y) \in A_\eps\}$ has measure zero. By Fubini's Theorem for null sets (see Oxtoby~\cite{oxtoby}, Theorem~14.2 and the discussion following Theorem~14.3), this implies that $A(\eps)$ has Lebesgue measure zero in $[0,1]^2$, so that the set
$$A=\{ (x,y) : Z_1(x,y) \neq Z_2(x,y)\}  \subseteq \bigcup_{n=1}^\infty A(1/n)$$
has measure zero, as claimed.

Our proof of part (ii) is based on the following three facts, which will be proved in Lemma~\ref{lema.amostra}, in Proposition~\ref{prop_limit2} and in the discussion proceeding it. 
\begin{itemize}
\item[(a)] The convergence of the random sequence $(\sigma(n,Z))_{n \in \mathbb{N}}$ to $Z$ is capured by $d_{\square}$. Indeed, with probability 1, we have $\lim_{n \to \infty} d_{\square}(Z_{\sigma(n,Z)},Z)=0$.

\item[(b)] If $(\sigma_n)_{n \in \mathbb{N}}$ and $(\pi_n)_{n \in \mathbb{N}}$ are permutation sequences converging to the same limit permutation $Z$, where $|\sigma_n|=|\pi_n|$ for every $n$, then $\lim_{n \to \infty} d_{\square}(\sigma_n,\pi_n) = 0$.

\item[(c)] If $(\sigma_n)_{n \in \mathbb{N}}$ is a permutation sequence converging to $Z_1$, then $Z_2$ is a limit to $(\sigma_n)_{n \in \mathbb{N}}$ if and only if $d_{\square}(Z_1,Z_2)=0$.
\end{itemize}

First observe that the desired result follows. Recall that we have to show that a permutation sequence $(\sigma_n)_{n \in \mathbb{N}}$ converges to a limit permutation $Z$ if and only if
$$\lim_{n \to \infty} d_{\square}(Z_{\sigma_n},Z)=0.$$
First assume that a permutation sequence $(\pi_n)_{n \in \mathbb{N}}$ converges to a limit $Z$. Then
\begin{equation*}
\begin{split}
d_{\square}(Z_{\pi_n},Z) &\leq \EE(d_{\square}(Z_{\pi_n},Z_{\sigma(|\pi_n|,Z)}))+ \EE(d_{\square}(Z_{\sigma(|\pi_n|,Z)},Z))\\
&= \EE(d_{\square}(\pi_n,\sigma(|\pi_n|,Z)))+\EE(d_\square(\sigma(|\pi_n|,Z),Z)).
\end{split}
\end{equation*}
The first term goes to zero by part (b), since the sequences $(\sigma(|\pi_n|,Z))_{n \in \mathbb{N}}$ and $(\pi_n)_{n \in \mathbb{N}}$ converge to the same limit with probability one. The second term goes to zero by part (a).

Conversely, let $(\pi_n)_{n \in \mathbb{N}}$ be such that $d_{\square}(Z_{\pi_n},Z)$ tends to zero as $n$ tends to infinity. Let $(\pi_n')$ be a convergent subsequence of $(\sigma_n)$ with limit $Z_1$. Then, since
\begin{equation*}
d_{\square}(Z,Z_1) \leq d_{\square}(Z_1,Z_{\pi_n'}) + d_{\square}(Z,Z_{\pi_n'})
\end{equation*}
for every $n$, and both terms on the right-hand side tend to zero as $n$ tends to infinity, we conclude that $d_{\square}(Z,Z_1)=0$. Part (c) tells us that $Z$ is also a limit for $(\pi_n')_{n \in \mathbb{N}}$.

We have proved that every convergent subsequence of $(\pi_n)$ converges to $Z$. The fact that the sequence $(\pi_n)$ itself must converge to $Z$ is a consequence of Lemma~\ref{conv_subseq}, which states that the space of all permutations is sequentially compact.
\end{proof}

We now prove the above assertions (a), (b) and (c). 
\begin{lemma}\label{lema.amostra}
Let $Z$ be a limit permutation. With probability one, the sequence of $Z$-random permutations $(\sigma(n,Z))_{n \in \mathbb{N}}$ satisfies
\[
  \lim_{n\to\infty} d_{\square}(Z,Z_{\sigma(n,Z)})\ =\ 0.
\]
\end{lemma}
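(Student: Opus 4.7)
The plan is to exploit the fact that the $Z$-random permutation $\sigma(n,Z)$ is built from iid samples $(X_i,a_i)_{i=1}^n$ whose two coordinate marginals are \emph{both} uniform on $[0,1]$: the uniform marginal of $a_i$ is forced by condition~(b) of Definition~\ref{def.perm.lim}, since $\PP(a_i\le y)=\int_0^1 Z(x,y)\,dx=y$. This will let me reduce the rectangular distance to the uniform convergence of empirical measures of iid samples over the class of rectangles in $[0,1]^2$.

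First I would unpack $Z_\sigma$ for $\sigma=\sigma(n,Z)$: from~\eqref{eqdefZsigma}, for $x\in((i-1)/n,i/n]$ the cdf $Z_\sigma(x,\cdot)$ has a single unit jump at $y=\sigma(i)/n$, so its Lebesgue--Stieltjes measure is the Dirac mass at $\sigma(i)/n$. Hence, for any rectangle $R=[x_1,x_2]\times[\alpha_1,\alpha_2]$,
\[
  \int_{x_1}^{x_2}\int_{\alpha_1}^{\alpha_2} dZ_\sigma(x,\cdot)\,dx
  \;=\;\tfrac{1}{n}\bigl|\{i\in[n]:i/n\in[x_1,x_2],\ \sigma(i)/n\in[\alpha_1,\alpha_2)\}\bigr|\;+\;O(1/n),
\]
the error absorbing at most two boundary $x$-blocks. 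Recalling $\sigma=S\circ R^{-1}$ and substituting $i=R(j)$, so $\sigma(i)=S(j)$, this count equals $|\{j\in[n]:R(j)/n\in[x_1,x_2],\ S(j)/n\in[\alpha_1,\alpha_2)\}|/n$.

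Next I would pass from ranks to values and from empirical to true measure. Applying classical Glivenko--Cantelli to the uniform marginals of $X_j$ and of $a_j$ gives, almost surely, $\eta_n:=\max(\sup_j|R(j)/n-X_j|,\ \sup_j|S(j)/n-a_j|)\to 0$, which sandwiches the above count between $|\{j:(X_j,a_j)\in R^-_{\eta_n}\}|/n$ and $|\{j:(X_j,a_j)\in R^+_{\eta_n}\}|/n$, where $R^{\pm}_{\eta}$ denotes $R$ shrunk or thickened by $\eta$ in each coordinate. Since the family of axis-aligned rectangles in $[0,1]^2$ is a VC class, the uniform Glivenko--Cantelli theorem yields
\[
  \sup_{R'}\Bigl|\tfrac{1}{n}\,|\{j:(X_j,a_j)\in R'\}|-\PP\bigl((X_1,a_1)\in R'\bigr)\Bigr|\ \longrightarrow\ 0\quad\text{almost surely,}
\]
and the uniform marginals guarantee $|\PP((X,a)\in R^{\pm}_{\eta})-\PP((X,a)\in R)|\le 4\eta$ uniformly in $R$. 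Combining these, almost surely $\sup_R\bigl|\int_R dZ_{\sigma(n,Z)}(x,\cdot)\,dx-\int_R dZ(x,\cdot)\,dx\bigr|\to 0$, which is precisely $d_\square(Z_{\sigma(n,Z)},Z)\to 0$.

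The hard part will be securing uniformity over the uncountable family of rectangles in both approximation steps; this is what forces a VC-class version of Glivenko--Cantelli (rather than pointwise convergence for individual rectangles) together with the strip-measure bound coming from marginal uniformity, itself a consequence of condition~(b). An elementary alternative avoids VC machinery: fix a grid of mesh $1/k$, apply Hoeffding's inequality with a union bound to the $O(k^4)$ grid rectangles, use uniform marginals to approximate an arbitrary rectangle by the closest grid rectangle, and let $k=k(n)\to\infty$ slowly enough that Borel--Cantelli delivers almost-sure convergence.
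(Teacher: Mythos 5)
Your argument is correct and reaches the same conclusion, following the same high-level template as the paper --- reduce to a uniform statement over rectangles, handle a finite grid of rectangles by a large-deviation bound plus a union bound, and feed the tail probabilities into Borel--Cantelli --- but you route the concentration step through a cleaner decomposition. The paper works directly with the rank-based count $N(\sigma_n)$, whose indicator summands $I_k$ are \emph{dependent} (each depends on the whole sample through the ranks $R_n$, $S_n$); it therefore computes the mean $\EE N(\sigma_n)=np$ by concentrating the Beta-distributed order statistics via Chebyshev, and then concentrates $N(\sigma_n)$ around its mean with an exposure (Azuma) martingale. You instead insert a rank-to-value bridge up front: classical Glivenko--Cantelli applied to each of the two uniform marginals (uniformity of the $a_j$ being exactly Lemma~\ref{lema_uniform}, which you correctly rederive from Definition~\ref{def.perm.lim}(b)) gives $\eta_n\to 0$ a.s., after which the quantity to control is $\tfrac{1}{n}|\{j:(X_j,a_j)\in R'\}|$, a normalized sum of genuinely \emph{iid} indicators. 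This lets you invoke off-the-shelf empirical-process tools --- the VC uniform Glivenko--Cantelli theorem, or equivalently your elementary grid-plus-Hoeffding-plus-union-bound variant --- in place of Azuma, and the strip bound $|\PP(R^{\pm}_\eta)-\PP(R)|\le 4\eta$ coming from marginal uniformity plays the same role as the paper's estimate \eqref{eq.semnome4} when passing from grid rectangles to arbitrary ones. The two proofs are morally equivalent (your $\eta_n$-sandwiching corresponds to the paper's conditioning on the event $D$ that the relevant order statistics are within $n^{-1/4}$ of their means), but your reformulation decouples the two sources of randomness cleanly and lets standard iid concentration replace the martingale argument; what the paper's direct route buys is explicit exponential tail bounds in $n$ without needing to track the GC error separately.
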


For simplicity, we refer to the rectangular distance between $Z$ and $Z_{\sigma(n,Z)}$ as the distance between $Z$ and the \emph{sample} $\sigma(n,Z)$ of $Z$, which is denoted by $d_{\square}(Z,\sigma(n,Z))$. In the proof of Lemma~\ref{lema.amostra}, we shall determine that the components of the vector $(a_1,\ldots,a_n)$ in the definition of a random permutation are uniformly distributed in $[0,1]$. We state this result here for later reference.
\begin{lemma}\label{lema_uniform}
Let $Z$ be a limit permutation and $(X_1,\ldots,X_n)$ be a random variable such that each $X_i$ is chosen uniformly at random in the interval $[0,1]$. Consider the random variable $(a_1,\ldots,a_n)$ where, for $i \in [n]$, the component $a_i$ is chosen in $[0,1]$ according to the probability distribution given by $Z(x_i,\cdot)$. Then
\[
\PP(\alpha_1 \leq a_k \leq \alpha_2)  =\int_0^1\int_{\alpha_1}^{\alpha_2} dZ(x)dx\ =\
  \int_0^1\Big(\lim_{y \to \alpha_2^+}Z(x,y)-Z(x,\alpha_1)\Big)dx\ =\ \alpha_2-\alpha_1.
\]
In particular, the components of the vector $(a_1,\ldots,a_n)$ are uniformly distributed in $[0,1]$.
\end{lemma}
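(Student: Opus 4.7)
The plan is to condition on $X_k$, apply the definition of the Lebesgue--Stieltjes measure associated with the left-continuous cdf $Z(x,\cdot)$, and finally invoke the ``mass'' condition (b) of Definition~\ref{def.perm.lim} together with a routine interchange of limit and integral. Since the pairs $(X_i,a_i)$ are generated independently and identically distributed, it suffices to treat a single index $k$; the joint independence and identical distribution of the $a_i$'s will follow at once from the independence of the underlying pairs.

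First I would fix $\alpha_1<\alpha_2$ in $[0,1]$ and compute the conditional probability. Given $X_k=x$, the random variable $a_k$ has distribution $\mu_{Z(x,\cdot)}$, hence
\[
\PP\bigl(\alpha_1\leq a_k\leq \alpha_2\mid X_k=x\bigr)\ =\ \mu_{Z(x,\cdot)}([\alpha_1,\alpha_2])\ =\ \lim_{y\to\alpha_2^+}Z(x,y)-Z(x,\alpha_1),
\]
where the last equality uses that $Z(x,\cdot)$ is a left-continuous cdf with $\mu_{Z(x,\cdot)}([a,b))=Z(x,b)-Z(x,a)$, so that the point mass at $\alpha_2$ contributes $\lim_{y\to\alpha_2^+}Z(x,y)-Z(x,\alpha_2)$ (with the convention $\lim_{y\to 1^+}Z(x,y)=Z(x,1)=1$). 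Integrating over the uniform law of $X_k$ on $[0,1]$ yields
\[
\PP\bigl(\alpha_1\leq a_k\leq \alpha_2\bigr)\ =\ \int_0^1\Bigl(\lim_{y\to\alpha_2^+}Z(x,y)-Z(x,\alpha_1)\Bigr)\,dx,
\]
which is precisely the middle expression in the statement.

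Next I would evaluate the two integrals separately. Condition (b) of Definition~\ref{def.perm.lim} gives $\int_0^1 Z(x,\alpha_1)\,dx=\alpha_1$ directly. For the other term, pick any decreasing sequence $y_n\downarrow\alpha_2$; since $Z(x,\cdot)$ is non-decreasing and bounded by $1$, the sequence $Z(x,y_n)$ is monotone and uniformly bounded, so the dominated convergence theorem allows us to pass the limit through the integral:
\[
\int_0^1\lim_{y\to\alpha_2^+}Z(x,y)\,dx\ =\ \lim_{n\to\infty}\int_0^1 Z(x,y_n)\,dx\ =\ \lim_{n\to\infty} y_n\ =\ \alpha_2,
\]
using condition (b) in the middle step (with $\int_0^1 Z(x,1)\,dx=1$ handling the case $\alpha_2=1$ via condition (a)). Subtracting the two values gives $\PP(\alpha_1\leq a_k\leq \alpha_2)=\alpha_2-\alpha_1$, so $a_k$ is uniform on $[0,1]$.

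The final sentence of the lemma follows because the pairs $(X_i,a_i)$ are independent and each $a_i$ was just shown to be uniform on $[0,1]$, so $(a_1,\dots,a_n)$ is an i.i.d.\ uniform vector. There is no real obstacle in this proof; the only subtlety is the careful bookkeeping of the point mass of $Z(x,\cdot)$ at $\alpha_2$ (to obtain the $\lim_{y\to\alpha_2^+}$ in the formula) and the justification of the interchange of limit and integral, both of which are handled by monotonicity of cdfs and dominated convergence.
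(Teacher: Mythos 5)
Your proposal is correct and takes essentially the same route as the paper: the paper's own proof (embedded in the proof of Lemma~\ref{lema.amostra}) simply writes out the same chain of equalities and cites part~(b) of Definition~\ref{def.perm.lim}, while you supply the underlying details — conditioning on $X_k$, the Lebesgue--Stieltjes computation $\mu_{Z(x,\cdot)}([\alpha_1,\alpha_2])=\lim_{y\to\alpha_2^+}Z(x,y)-Z(x,\alpha_1)$ for a left-continuous cdf, and the dominated-convergence interchange needed to evaluate $\int_0^1\lim_{y\to\alpha_2^+}Z(x,y)\,dx$ (the paper relegates that interchange to the analogous argument in Lemma~\ref{Lema.DescNula}).
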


\begin{proof}[Proof of Lemma~\ref{lema.amostra}] Consider the random permutations $\sigma_n=\sigma(n,Z)$. We have to show that, for every $\eps>0$, there exists a positive integer $n_0$ such that, for every $n >n_0$, every $x_1 \leq x_2 \in [0,1]$ and every $\alpha_1 \leq \alpha_2 \in [0,1]$, we have
$$\left|\int_{x_1}^{x_2}
  \int_{\alpha_1}^{\alpha_2}d(Z-Z_{\sigma_n})(x,\cdot)\ dx \right|<\eps.$$

For $\varepsilon>0$ fixed, we shall consider the event $B_n=B_n(\eps)$ given by
\[
  B_n(\eps)\ =\ \left(\exists~ x_1,x_2,\alpha_1,\alpha_2\in V_n,\ \
  \Big|\int_{x_1}^{x_2}\int_{\alpha_1}^{\alpha_2}d\big(Z_{\sigma_n}-Z\big)(x,\cdot)dx\Big|
  > \varepsilon+6n^{-1/4}\right),
\]
where  $V_n=\left\{0,\frac{1}{n},\frac{2}{n},\ldots,\frac{n-1}{n},1\right\}$. We shall prove that, for $n$ sufficiently large,
\begin{equation}\label{eqaux4}
\PP\big(\ B_n(\eps)\ \big)\  \leq\ e^{-\varepsilon^2n/2+4\ln(n)}.
\end{equation}
As the sum $\sum_{n=1}^\infty e^{-\varepsilon^2n/2+4\ln(n)}$ converges, the  Borel-Cantelli Lemma implies that, for every $\eps>0$, the set of values of $n$ for which the event $B_n(\eps)$ occurs is finite with probability one. Moreover, Lemma~\ref{lema_uniform} implies that 
$$\int_{0}^1 \int_{y}^{y+1/n}d Z(x,\cdot)dx=1/n$$
for every $y \in [0,1-1/n]$ and every limit permutation $Z$. For every $x \in [0,1-1/n]$, the definition of limit permutation also leads to
$$\int_{x}^{x+1/n} \int_{0}^{1}d Z(x,\cdot)dx=1/n.$$
Because the real numbers $x_1,x_2,\alpha_1,\alpha_2$ are each within $1/n$ of a rational number in the set $B_n(\eps)$, we may use these two equations to conclude that, for $n$ sufficiently large,
\begin{equation}\label{eq.borelcanteli2}
  \PP\Big(d_{\square}(Z,\sigma(n,Z))>\varepsilon+7n^{-1/4}\Big)\ \leq\
  e^{-\varepsilon^2n/2+4\ln(n)}.
\end{equation}
Since $\eps>0$ is arbitrary, we deduce that $d_{\square}(Z,\sigma(n,Z))\to 0$ with probability one as $n$ tends to infinity, which is the desired result.

We henceforth concentrate on establishing (\ref{eqaux4}). (We also prove Lemma~\ref{lema_uniform} along the way.) To this end, let $x_1 \leq x_2\in[0,1]$ and $\alpha_1 \leq \alpha_2\in[0,1]$ be fixed and consider
$$L(\sigma_n)=n \cdot \int_{x_1}^{x_2} \int_{\alpha_1}^{\alpha_2}dZ_{\sigma_n}(x,\cdot)\ dx=n \cdot\int_{x_1}^{x_2} \left(\lim_{y \to \alpha_2^+} Z_{\sigma_n}(x,y)- Z_{\sigma_n}(x,\alpha_1) \right)~dx.$$
Recall that the step function $Z_{\sigma_n}$ is defined in (\ref{eqdefZ}) in terms of the bipartite adjacency matrix $Q_{\sigma_n}$ associated with the permutation $\sigma_n$. In particular, we have
\begin{equation*}
\lim_{y \to \alpha_2^+} Z_{\sigma_n}(x,y)- Z_{\sigma_n}(x,\alpha_1) =
\left\{
\begin{matrix}
1, &\textrm{ if }\lceil \alpha_1 n\rceil \leq \sigma_n(\lceil x\, n \rceil)  < \lceil \alpha_2 n\rceil, \\
0, &\textrm{ otherwise.}
\end{matrix}
\right.
\end{equation*}
This suggests that we should rewrite $L(\sigma_n)$ as
\begin{equation}\label{eq.LSigma}
\begin{split}
  L(\sigma_n) & = n \cdot \sum_{i=\lceil x_1 n \rceil}^{\lceil x_2 n \rceil-1} \int_{i/n}^{(i+1)/n} \int_{\alpha_1}^{\alpha_2}dZ_{\sigma_n}(x,\cdot)\ dx + n \cdot f_{\sigma_n}(x_1,x_2,\alpha_1,\alpha_2)\\
&=\Big| \big\{i \in [n] : \lceil x_1 n \rceil \leq i \leq \lceil x_2 n \rceil,  \lceil \alpha_1 n\rceil \leq \sigma_n(i)  < \lceil \alpha_2 n\rceil  \big\}\Big| + n \cdot f_{\sigma_n}(x_1,x_2,\alpha_1,\alpha_2),\\
\end{split}
\end{equation}
where $|f_{\sigma_n}(x_1,x_2,\alpha_1,\alpha_2)|=\left|\int_{x_1}^{\lceil x_1 n \rceil/n} \int_{\alpha_1}^{\alpha_2}dZ_{\sigma_n}(x,\cdot)\ dx  - \int_{x_2}^{\lceil x_2 n \rceil/n} \int_{\alpha_1}^{\alpha_2}dZ_{\sigma_n}(x,\cdot)\ dx \right| \leq 2/n$.

We now evaluate $N(\sigma_n)=\big| \big\{i \in [n] : \lceil x_1 n \rceil \leq i \leq \lceil x_2 n \rceil,  \lceil \alpha_1 n\rceil \leq \sigma_n(i)  < \lceil \alpha_2 n\rceil  \big\}\big|$. Let $R_n$ and $S_n$ be the permutations from which $\sigma_n$ has been generated, that is, $\sigma_n=S_n \cdot R_n^{-1}$, so that $N(\sigma_n)$ is given by the number of elements $k \in [n]$ for which $ \lceil x_1 n \rceil \leq R_n^{-1}(k) \leq \lceil x_2 n \rceil$ and $ \lceil \alpha_1 n\rceil  \leq S_n \cdot R_n^{-1} (k) < \lceil \alpha_2 n\rceil$.  

For every $k\in[n]$, let $A_k$ be the event
$$A_k=\Big( \lceil x_1 n \rceil \leq R^{-1}_n(k) \leq \lceil x_2 n \rceil  \textrm{ and } \lceil \alpha_1 n\rceil  \leq S_n \cdot R^{-1}_n(k) < \lceil \alpha_2 n\rceil\Big),$$
and let $I_k$ denote the indicator random variable for $A_k$. On the one hand, we clearly have $p=\PP(A_i)=\PP(A_j)$ for every $i$ and $j$, as the pairs $(X_1,a_1),\ldots,(X_n,a_n)$ are generated independently according to the same rule. On the other hand, the above implies that $N(\sigma_n)=\sum_{k\in[n]}I_{k}$, so that, by linearity of expectation,
\begin{equation}\label{eqesperanca}
  \EE\Big(N(\sigma_n)\Big)\ =\ \sum_{k\in[n]}\EE(I_k)\ =\ n\cdot p.
\end{equation}

To estimate $p$, we let $k$ be fixed and consider the probability that $A_k$ holds. This is the probability that $X_k$ is ranked between positions $\lceil x_1 n \rceil$ and $\lceil x_2 n \rceil$, while $a_k$ is ranked between positions $\lceil \alpha_1 n \rceil$ and $\lceil \alpha_2 n \rceil$, among all $X_i$ and $a_i$ that are randomly generated.

It is a well-known fact that, if $n$ elements are independently chosen according to the uniform distribution in $[0,1]$, then the $k$-th smallest element has distribution Beta$(k,n-k+1)$ for every $k\in[n]$, so that it has mean $k/(n+1)$ and variance smaller than $1/n$. In our case, the elements  $(X_1,\ldots,X_n)$ are uniformly and independently distributed by definition. Perhaps more surprisingly, this is also the case for the elements
$(a_1,\ldots,a_n)$,
since we have
\[
\PP(\alpha_1 \leq a_k \leq \alpha_2)  =\int_0^1\int_{\alpha_1}^{\alpha_2} dZ(x)dx\ =\
  \int_0^1\Big(\lim_{y \to \alpha_2^+}Z(x,y)-Z(x,\alpha_1)\Big)dx\ =\ \alpha_2-\alpha_1
\]
by part (b) of Definition~\ref{def.perm.lim}. Observe that this establishes Lemma~\ref{lema_uniform}.

With Chebyschev's inequality and $n$ sufficiently large, we conclude that the $\lceil x_1n\rceil$-th smallest element of $(X_1,\ldots,X_n)$ is concentrated around $x_1$ so that the variation is larger than $n^{-1/4}$ with probability smaller than $n^{-1/2}$. This type of concentration also holds for the $\lceil x_2n\rceil$-th smallest element of $(X_1,\ldots,X_n)$, and for the the $\lceil \alpha_1n\rceil$-th  and $\lceil \alpha_2n\rceil$-th smallest elements of $(a_1,\ldots,a_n)$, which are concetrated around $x_2$, $\alpha_1$ and $\alpha_2$, respectively.

Let $D$ denote the event that the $\lceil x_1n\rceil$-th and the $\lceil x_2n\rceil$-th smallest elements of $(X_1,\ldots,X_n)$ and that the
$\lceil\alpha_1n\rceil$-th and the $\lceil\alpha_2n\rceil$-th smallest elements of $(a_1,\ldots,a_n)$ are within $n^{-1/4}$ of $x_1,x_2,\alpha_1,\alpha_2$, respectively. Clearly, $\PP(D)\geq 1-4n^{-1/2}$ and, with conditional probabilities, we have
\begin{equation}\label{eq.semnome2}
  \PP\Big(A_k\Big)=\PP\Big(A_k\ |\ D\Big)\cdot\PP(D)\ +\ \PP\Big(A_k\ |\ \bar{D}\Big)\cdot\PP(\bar{D}).
\end{equation}

Consider the quantities $x_{1}^{-}=x_1-n^{-1/4}$, $x_{1}^{+}=x_1+n^{-1/4}$, $x_{2}^{-}=x_2-n^{-1/4}$, $x_{2}^{+}=x_2+n^{-1/4}$,
$\alpha_{1}^{-}=\alpha_1-n^{-1/4}$, $\alpha_{1}^{+}=\alpha_1+n^{-1/4}$, $\alpha_{2}^{-}=\alpha_2-n^{-1/4}$
and $\alpha_{2}^{+}=\alpha_2+n^{-1/4}$. We define
events $E_1$ and $E_2$ as follows.
\[
  E_1=\Big(x_{1}^{+}<X_k\leq x_{2}^{-}\ \ \textrm{and}\ \ \alpha_{1}^{+}<a_k\leq\alpha_{2}^{-}\Big)
\]
\[
  E_2=\Big(x_{1}^{-}<X_k\leq x_{2}^{+}\ \ \textrm{and}\ \ \alpha_{1}^{-}<a_k\leq\alpha_{2}^{+}\Big).
\]

By our work so far, we know that $\PP\big(A_k|D\big)\geq\PP\big(E_1|D\big)$ and $\PP\big(A_k|D\big)\leq\PP\big(E_2|D\big)$. With the notation $a=b \pm \eps$ standing for $a \in [b-\eps,b+\eps]$, note that
\begin{equation*}
\begin{split}
  \PP(E_1)&=\int_{x_1^+}^{x_2^-}\int_{\alpha_1^+}^{\alpha_2^-}dZ(x)dx =\int_{x_1}^{x_2}\int_{\alpha_1}^{\alpha_2}dZ(x)dx-\int_{x_1}^{x_2}\int_{\alpha_1}^{\alpha_1^{+}}dZ(x)dx \\
&\, \, -\int_{x_1}^{x_2}\int_{\alpha_2^{-}}^{\alpha_2}dZ(x)dx-\int_{x_1}^{x_1^{+}}\int_{\alpha_1^{+}}^{\alpha_2^{-}}dZ(x)dx-\int_{x_2^{-}}^{x_2}\int_{\alpha_1^{+}}^{\alpha_2^{-}}dZ(x)dx\\
&=\int_{x_1}^{x_2}\int_{\alpha_1}^{\alpha_2}dZ(x)dx\ \pm \ 4n^{-1/4},
\end{split}
\end{equation*}
since we have, for example,
\begin{equation}\label{eq.semnome4}
  \int_{x_1}^{x_2}\int_{\alpha_1}^{\alpha_1^+}dZ(x)dx\ \leq\
  \int_0^1\int_{\alpha_1}^{\alpha_1^+}dZ(x)dx\ = \alpha_1^+-\alpha_1\ =\ n^{-1/4}.
\end{equation}
The same estimate holds for $\PP(E_2)$. Further observe that
$$\PP(E_1|D)=\PP(E_1,D)/\PP(D)=(\PP(E_1)-\PP(E_1 \cap \bar{D}))/\PP(D)\geq
(\PP(E_1)-\PP(\bar{D}))/\PP(D)$$
and that $\PP(E_2|D)=\PP(E_2 \cap D)/\PP(D)\leq\PP(E_2)/\PP(D)$.
With $n$ chosen sufficiently large and $\PP(\bar{D})\leq 4n^{-1/2}$, this implies that
\[
  \PP\Big(E_1\ |\ D\Big)\ \geq\ \PP(E_1)\ -\ 5n^{-1/2}\ \ \ \ \ \textrm{and}\ \ \ \ \
  \PP\Big(E_2\ |\ D\Big)\ \leq\ \PP(E_2)\ +\ 5n^{-1/2}.
\]
From (\ref{eq.semnome2}), we may derive the expressions
$\PP\big(A_k\big)\ \geq\ \PP\big(E_1\ |\ D\big)-4n^{-1/2}\ \geq\ \PP(E_1)-9n^{-1/2}$ and $\PP\big(A_k\big)\ \leq\ \PP\big(E_2\ |\ D\big)+4n^{-1/2}\ \leq\ \PP(E_2)+9n^{-1/2}$, so that we have
\[
  p=\PP\Big(A_k\Big)\ =\ \int_{x_1}^{x_2}\int_{\alpha_1}^{\alpha_2}dZ(x)dx
  \ \pm\ \big(4n^{-1/4}\ +\ 9n^{-1/2}\big).
\]

To finish the proof, we now show that the value of $N(\sigma_n)$ is strongly concentrated around its expected value $\EE\Big(N(\sigma_n)\Big)$. To this end,
we use exposure martingales as in the proof of Theorem~\ref{rud.teoZPerm}. Let $Y_0,\ldots Y_n$ be the random variables given by $Y_i=\EE\left(N(\sigma_n)\ |\ (X_1,a_1),\ldots,(X_i,a_i)\right)$. With this definition, $Y_0=\EE\Big(N(\sigma_n)\Big)$ and $Y_n=\EE\Big(N(\sigma_n)|(X_1,a_1),\ldots,(X_n,a_n)\Big)=N(\sigma_n)$. Further note that $|Y_{i+1}-Y_i|\leq 1$, because, by changing the outcome of a random pair $(X_{i+1},a_{i+1})$, at most one element can be added or removed from the set $A_k$. Azuma's inequality now yields
\[
  \PP\Big(\Big|N(\sigma_n)\ -\ \EE\big(N(\sigma_n)\big)\Big|\ >\ \varepsilon n\Big)
  \ =\ \PP\Big(\Big|Y_n-Y_0\Big|>\varepsilon n\Big)
  \ \leq\ 2e^{-\varepsilon^2n/2}.
\]
For $n$ large, we have $4n^{-1/4}+9n^{-1/2}+4n^{-1}\ <\ 5n^{-1/4}$, from which we derive
\[
  \PP\Big(\Big|\int_{x_1}^{x_2}\int_{\alpha_1}^{\alpha_2}dZ_{\sigma_n}(x,\cdot)dx
  \ -\ \int_{x_1}^{x_2}\int_{\alpha_1}^{\alpha_2}dZ(x,\cdot)dx\Big|\ >\ \varepsilon+5n^{-1/4}\Big)
  \ \leq\ 2e^{-\varepsilon^2n/2}.
\]

Now, if we choose the elements $x_1,x_2,\alpha_1,\alpha_2\in[0,1]$ in the set $V_n=\left\{0,\frac{1}{n},\frac{2}{n},\ldots,\frac{n-1}{n},1\right\}$,
we have
\[
  \PP\Big(\exists~ x_1<x_2,\alpha_1<\alpha_2\in V_n,
  \Big|\int_{x_1}^{x_2}\int_{\alpha_1}^{\alpha_2}d\big(Z_{\sigma_n}-Z\big)(x,\cdot)dx\Big|
  >\varepsilon+5n^{-1/4}\Big)\leq 2\binom{n}{2}^2e^{-\varepsilon^2n/2}.
\]

This establishes equation (\ref{eqaux4}), and therefore concludes our proof.
\end{proof}

There are two important consequences of Lemma~\ref{lema.amostra}.

\begin{lemma}\label{lema.volta2}
Let $Z_1$ and $Z_2$ be limit permutations such that, for every $m>1$, the condition $\Big|t(\tau,Z_1)-t(\tau,Z_2)\Big|\leq 1/(m+1)!$ holds for every permutation $\tau$ on $[m]$. Then $d_{\square}(Z_1,Z_2)=0$.
\end{lemma}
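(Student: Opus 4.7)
The plan is to use $Z$-random permutations as a bridge between the hypothesis on subpermutation densities and the conclusion about rectangular distance, combining a standard coupling argument with the sampling result of Lemma~\ref{lema.amostra}.

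First, since $t(\tau,Z_i) = \PP(\sigma(m,Z_i) = \tau)$ for every $\tau \in S_m$ (as noted just after Theorem~\ref{rud.teoZPerm}), the hypothesis immediately translates into a total variation bound
\[
  d_{\mathrm{TV}}\bigl(\mathcal{L}(\sigma(m,Z_1)),\mathcal{L}(\sigma(m,Z_2))\bigr) \;=\; \tfrac{1}{2}\sum_{\tau\in S_m}|t(\tau,Z_1)-t(\tau,Z_2)| \;\leq\; \frac{m!}{2(m+1)!} \;=\; \frac{1}{2(m+1)}.
\]
By the maximal coupling lemma, for each $m$ I can construct on a common probability space random permutations $\widehat\sigma^m_1$ and $\widehat\sigma^m_2$, distributed as $\sigma(m,Z_1)$ and $\sigma(m,Z_2)$ respectively, and satisfying $\PP(\widehat\sigma^m_1 = \widehat\sigma^m_2) \geq 1 - 1/(2(m+1))$.

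Second, I appeal to the quantitative core of Lemma~\ref{lema.amostra}, namely the concentration inequality~(\ref{eq.borelcanteli2}). This estimate depends only on the marginal law of each sample $\sigma(m,Z_i)$, so it remains valid under the coupling above. Consequently, for every $\eps > 0$ one has $\PP(d_{\square}(Z_i, \widehat\sigma^m_i) > \eps) \to 0$ as $m \to \infty$, for both $i = 1, 2$.

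Third, given an arbitrary $\eps > 0$, I choose $m$ large enough that each of $\PP(d_{\square}(Z_1, \widehat\sigma^m_1) > \eps)$, $\PP(d_{\square}(Z_2, \widehat\sigma^m_2) > \eps)$ and $1/(2(m+1))$ is smaller than $1/4$. Then the three events $d_{\square}(Z_i, \widehat\sigma^m_i) \leq \eps$ (for $i=1,2$) and $\widehat\sigma^m_1 = \widehat\sigma^m_2$ simultaneously hold with probability at least $1/4 > 0$. Selecting any outcome in this non-empty intersection and applying the triangle inequality for $d_\square$, together with $d_{\square}(\widehat\sigma^m_1, \widehat\sigma^m_2) = 0$ on the event of coincidence, yields
\[
  d_{\square}(Z_1, Z_2) \;\leq\; d_{\square}(Z_1, \widehat\sigma^m_1) + d_{\square}(\widehat\sigma^m_1, \widehat\sigma^m_2) + d_{\square}(\widehat\sigma^m_2, Z_2) \;\leq\; 2\eps.
\]
Since $\eps > 0$ is arbitrary, $d_{\square}(Z_1, Z_2) = 0$. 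The one slightly delicate point, and the main obstacle I anticipate, is the transfer of Lemma~\ref{lema.amostra}'s conclusion to the coupled probability space: Lemma~\ref{lema.amostra} is phrased as an almost-sure statement in the canonical product space underlying $\sigma(n,Z)$, whereas the coupling produces a different joint distribution. The plan sidesteps this issue by invoking the distributional tail estimate~(\ref{eq.borelcanteli2}), which depends only on the marginal distribution of the sample and therefore persists under any coupling that preserves those marginals.
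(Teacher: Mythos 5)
Your proof is correct and takes essentially the same route as the paper's own: both rely on Lemma~\ref{lema.amostra} to bring $Z_i$ close to a $Z_i$-random permutation in $d_\square$, and on a coupling of $\sigma(m,Z_1)$ and $\sigma(m,Z_2)$ derived from the hypothesis on subpermutation densities, combined via the triangle inequality for $d_\square$. The only cosmetic difference is that the paper bounds the \emph{expectation} of $d_{\square}(Z_1,\sigma(m,Z_1)) + d_{\square}(\sigma(m,Z_1),\sigma(m,Z_2)) + d_{\square}(\sigma(m,Z_2),Z_2)$, whereas you locate a single positive-probability outcome on which all three terms are small; both are equally valid, and your closing remark about the coupling preserving marginals correctly addresses the only delicate point.
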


\begin{proof}
We have to demonstrate that $d_{\square}(Z_1,Z_2)=0$ whenever $Z_1$ and $Z_2$ satisfy $|t(\tau,Z_1)-t(\tau,Z_2)|\leq 1/(m+1)!$ for every permutation $\tau : [m] \to [m]$.

By the triangle inequality, we have
$$d_{\square}(Z_1,Z_2) \leq d_{\square}(Z_1,\sigma(m,Z_1))
 + d_{\square}(\sigma(m,Z_1),\sigma(m,Z_2)) + d_{\square}(\sigma(m,Z_2),Z_2)$$
for every randomly generated $\sigma(m,Z_1)$ and $\sigma(m,Z_2)$, so that
\[
 d_{\square}(Z_1,Z_2) \leq \EE(d_{\square}(Z_1,\sigma(m,Z_1)))
 + \EE(d_{\square}(\sigma(m,Z_1),\sigma(m,Z_2))) + \EE(d_{\square}(\sigma(m,Z_2),Z_2)).
\]
We now claim that, for every $\eps>0$, there exists $m=m(\eps)$ such that each expected number in this upper bound is limited above by $\eps/3$. This clearly leads to our result.

Let $\varepsilon>0$ be fixed. By Lemma~\ref{lema.amostra}, we may choose $m$ sufficiently large so as to have
$\EE(d_{\square}(Z_1,\sigma(m,Z_1)))\leq\varepsilon/3$ and $\EE(d_{\square}(Z_2,\sigma(m,Z_2)))\leq\varepsilon/3$.

To bound the term $\EE(d_{\square}(\sigma(m,Z_1),\sigma(m,Z_2)))$, first observe that, if we set $m>3/\eps$,
\[
  \sum_{\tau:[m]\to[m]}\Big|\PP\big(\sigma(m,Z_1)=\tau\big)-\PP\big(\sigma(m,Z_2)=\tau\big)\Big|
  \ \leq\ m!\frac{1}{(m+1)!}\ \leq\ \frac{\varepsilon}{3},
\]
since $\PP\big(\sigma(m,Z_1)=\tau\big)=t(\tau,Z_1)$ for every $\tau:[m]\to[m]$.

We may now easily couple
$\sigma(m,Z_1)$ and $\sigma(m,Z_2)$ in such a way that $\sigma(m,Z_1)\not=\sigma(m,Z_2)$ with probability smaller than $\varepsilon/3$.
We deduce that
$$\EE\big(d_{\square}(\sigma(m,Z_1),\sigma(m,Z_2))\big) \leq \PP\big(\sigma(m,Z_1) \neq \sigma(m,Z_2)\big) \leq \frac{\eps}{3}.$$
Our result follows.
\end{proof}

The second consequence of Lemma~\ref{lema.amostra} is that, if $k$ is a sufficiently large constant, then the rectangular distance between a large permutation $\sigma$ and a random subpermutation of $\sigma$ with length $k$ tends to be small. To state the result, let $\sub(k,\sigma)$ denote a subpermutation of a permutation $\sigma$ with length $k$ chosen uniformly at random among all these subpermutations.

\begin{lemma}\label{cor.amostra2}
Let $k$ be a sufficiently large positive integer and let $\pi:[n]\to[n]$ be a permutation with $n>e^k$. Then we have $d_{\square}(\pi,\sub(k,\pi))\leq 7k^{-1/4}$ with probability at least $1-2 e^{-\sqrt{k}/3}$.
\end{lemma}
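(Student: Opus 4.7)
The plan is to couple $\sub(k,\pi)$ with the $Z_\pi$-random permutation $\sigma(k,Z_\pi)$ and then apply the tail estimate derived inside the proof of Lemma~\ref{lema.amostra}.

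First, observe that $d_{\square}(\pi,\sub(k,\pi)) = d_{\square}(Z_\pi, Z_{\sub(k,\pi)})$ by the extension of the rectangular distance to permutations of different lengths, so it suffices to bound the right-hand side. For the step function $Z_\pi$ defined in~(\ref{eqdefZsigma}), the CDF $Z_\pi(x,\cdot)$ is a unit step at $\pi(\lceil nx\rceil)/n$, so generating $\sigma(k,Z_\pi)$ is equivalent to drawing bins $b_1,\ldots,b_k$ i.i.d.\ uniform on $[n]$, setting $a_i=\pi(b_i)/n$, and reading off the induced pattern. Any two bins $b_i,b_j$ coincide with probability $1/n$, so by a union bound all $b_i$ are distinct with probability at least $1-\binom{k}{2}/n \geq 1-k^2 e^{-k}$, using $n>e^k$. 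Conditioned on no collision, $\{b_1,\ldots,b_k\}$ is a uniformly random $k$-subset of $[n]$, and $\sigma(k,Z_\pi)$ is then distributed exactly as $\sub(k,\pi)$. A natural coupling therefore makes the two random permutations agree with probability at least $1-k^2 e^{-k}$.

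Second, I would re-run the argument that produced equation~(\ref{eq.borelcanteli2}) with $Z := Z_\pi$ and sample size $k$. That argument relies only on two facts about $Z$: that each section $Z(x,\cdot)$ is a CDF (used to define the $a_i$), and that the marginal distribution of each $a_i$ is uniform on $[0,1]$ (used in the Chebyshev-based concentration of the rank statistics). Part~(a) of Definition~\ref{def.perm.lim} is built into $Z_\pi$; part~(b) holds only up to an $O(1/n) = O(e^{-k})$ error, which is negligible at scale $k^{-1/4}$. The exposure-martingale/Azuma step and the union bound over the grid $V_k$ carry over verbatim, yielding
\[
\PP\bigl(d_{\square}(Z_\pi,\sigma(k,Z_\pi)) > \varepsilon + c\,k^{-1/4}\bigr)\ \leq\ e^{-\varepsilon^2 k/2 + 4\ln k}
\]
for a constant $c\leq 6$. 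Choosing $\varepsilon$ so that $\varepsilon + c k^{-1/4} \leq 7k^{-1/4}$ and $\varepsilon^2 k /2 - 4\ln k \geq \sqrt{k}/3$ (both feasible for $k$ sufficiently large) delivers a tail bound of $e^{-\sqrt{k}/3}$.

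Combining the two ingredients via a union bound, the event $\{d_{\square}(\pi,\sub(k,\pi)) > 7k^{-1/4}\}$ is contained in the collision event together with the tail event for $\sigma(k,Z_\pi)$, so it has probability at most $k^2 e^{-k} + e^{-\sqrt{k}/3} \leq 2e^{-\sqrt{k}/3}$ for $k$ large enough. The main obstacle is to verify that the proof of Lemma~\ref{lema.amostra} genuinely carries over with the step function $Z_\pi$ in place of a bona fide limit permutation; in particular, the discrete (atomic) distribution of each $a_i$ must not derail the Chebyshev-based concentration of its rank statistics. This is precisely where the strong hypothesis $n>e^k$ earns its keep: it forces the atomic-versus-continuous discrepancy to be exponentially small in $k$, and hence absorbable into the claimed tail bound.
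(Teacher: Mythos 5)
Your proposal is correct and follows essentially the same route as the paper: identify $\sub(k,\pi)$ with the $Z_\pi$-random permutation $\sigma(k,Z_\pi)$ conditioned on the $X_i$ landing in distinct intervals $(\tfrac{b-1}{n},\tfrac{b}{n}]$ (an event of probability at least $1-k^2/n$, which $n>e^k$ makes negligible), re-run the argument of Lemma~\ref{lema.amostra} with $\varepsilon=k^{-1/4}$ to get the $e^{-\sqrt{k}/3}$ tail from equation~(\ref{eq.borelcanteli2}), and combine by a union bound. Your explicit remark that the mass condition for $Z_\pi$ holds only up to an $O(1/n)$ error, absorbed at scale $k^{-1/4}$, is exactly the technical point the paper handles by conditioning on the no-collision event $E$ throughout.
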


\begin{proof}
Let $k$ and $\pi:[n]\to[n]$ be as in the statement. Observe that one way of generating $\sub(k,\pi)$ with the right probability is to generate a $Z_\pi$-random permutation, where $Z_{\pi}$ is the step function associated with $\pi$. However, we need to condition on the event $E$ that the independent random variables $X_1,\ldots,X_k$, which are generated according to the uniform distribution in $[0,1]$, are such that no distinct $X_i$ and $X_j$ lie in a same interval $(\frac{b-1}{n},\frac{b}{n}]$ for some $b\in[n]$. This restriction is necessary because the function $Z_{\pi}$ is not a limit permutation; even though $Z_\pi$ is Lebesgue measurable in $[0,1]^2$ and $Z_\pi(x,\cdot)$ is a cdf for every $x \in [0,1]$, the mass condition fails to hold. Indeed, if $X_i$ and $X_j$ are both in $(\frac{b-1}{n},\frac{b}{n}]$, where $b \in [n]$, then the random variables $a_i$ and $a_j$ generated according to the cdf $Z_\pi(X_i,\cdot)$ and $Z_\pi(X_j,\cdot)$, respectively, are equal with probability one, as the function $Z_\pi(x,\cdot)$ satisfies $Z_{\pi}(x,\alpha)=0$ if
$\pi(\lceil x n\rceil) \geq \lceil \alpha n\rceil$ and $Z_{\pi}(x,\alpha)=1$ if $\pi(\lceil x n\rceil) < \lceil \alpha n\rceil$. In other words, it has a single discontinuity of height one for every $x$, and the point at which this discontinuity occurs depends on the interval $(\frac{b-1}{n},\frac{b}{n}]$ containing $x$.

By definition of $E$, we have $\PP(E)\geq\Big(1-\frac{k}{n}\Big)^k\geq 1-\frac{k^2}{n}\geq 1- e^{-k/2} > 1- e^{-\sqrt{k}/3}$. We may now follow the proof of Lemma~\ref{lema.amostra} step by step, always conditioning on the event $E$, to conclude through equation~(\ref{eq.borelcanteli2}) with $\varepsilon=k^{-1/4}$ that
$$\PP\Big(d_{\square}(Z_{\pi},\sigma(k,Z_{\pi}))>7k^{-1/4}\ \Big|E\Big)\leq e^{-\sqrt{k}/3}.$$
It is not hard to see that this leads to
$\PP\Big(d_{\square}(\pi,\sub(k,\pi))>7k^{-1/4}\ \Big|E\Big)\leq e^{-\sqrt{k}/3}$, so that
\[
  \PP\Big(d_{\square}(\pi,\sub(k,\pi))>7k^{-1/4}\Big)\leq e^{-\sqrt{k}/3}\ +\ (1-\PP(E)) \leq 2 e^{-\sqrt{k}/3},
\]
which establishes our result.
\end{proof}

We may now prove part (b), which is given here as a proposition.
\begin{proposition}\label{prop_limit2}
Let $(\sigma_n)_{n \in \mathbb{N}}$ and $(\pi_n)_{n \in \mathbb{N}}$ be permutation sequences converging to the same limit permutation $Z$, where $|\sigma_n|=|\pi_n|$ for every $n$. Then $\lim_{n \to \infty} d_{\square}(\sigma_n,\pi_n) = 0$.
\end{proposition}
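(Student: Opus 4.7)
The plan is to use Lemma~\ref{cor.amostra2} combined with a standard coupling argument, bridging $\sigma_n$ and $\pi_n$ by random subpermutations of fixed length $k$. Fix $\varepsilon>0$. First choose $k$ large enough that $14k^{-1/4}+4e^{-\sqrt k/3} < \varepsilon/2$, and in particular large enough for Lemma~\ref{cor.amostra2} to apply. Then I will choose $n$ large afterwards, using both the hypothesis $|\sigma_n|=|\pi_n|\to\infty$ (recall that, by Claim~\ref{claim_eventually_constant}, a sequence converging to a limit permutation must have lengths tending to infinity) and the convergence of densities.

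For every $n$ with $|\sigma_n|=|\pi_n|>e^k$, consider the random length-$k$ subpermutations $\widetilde\sigma_n=\sub(k,\sigma_n)$ and $\widetilde\pi_n=\sub(k,\pi_n)$, generated independently. By the triangle inequality for $d_\square$ (extended to permutations of arbitrary length via $d_\square(\alpha,\beta):=d_\square(Z_\alpha,Z_\beta)$),
\[
d_\square(\sigma_n,\pi_n)\;\leq\; d_\square(\sigma_n,\widetilde\sigma_n)+d_\square(\widetilde\sigma_n,\widetilde\pi_n)+d_\square(\widetilde\pi_n,\pi_n),
\]
pointwise on the underlying probability space. Taking expectations and observing that $d_\square(\sigma_n,\pi_n)$ is deterministic gives
\[
d_\square(\sigma_n,\pi_n)\;\leq\; \EE\big[d_\square(\sigma_n,\widetilde\sigma_n)\big]+\EE\big[d_\square(\widetilde\sigma_n,\widetilde\pi_n)\big]+\EE\big[d_\square(\widetilde\pi_n,\pi_n)\big].
\]
The two outer terms are handled by Lemma~\ref{cor.amostra2}: since $d_\square\leq 1$ always, each is at most $7k^{-1/4}+2e^{-\sqrt k/3}<\varepsilon/4$ by our choice of $k$.

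For the middle term I exploit the convergence hypothesis. For any $\tau\in S_k$, the definition of $\sub(k,\cdot)$ together with Definition~\ref{def.dens.perm1} gives $\PP(\widetilde\sigma_n=\tau)=t(\tau,\sigma_n)$ and $\PP(\widetilde\pi_n=\tau)=t(\tau,\pi_n)$. Since both $(\sigma_n)$ and $(\pi_n)$ converge to $Z$, for each fixed $\tau\in S_k$ the difference $t(\tau,\sigma_n)-t(\tau,\pi_n)$ tends to $0$, and as $|S_k|=k!$ is finite, the total variation distance between the two distributions on $S_k$ tends to $0$ as well. Hence, for $n$ sufficiently large, I can couple $\widetilde\sigma_n$ and $\widetilde\pi_n$ so that $\PP(\widetilde\sigma_n\neq\widetilde\pi_n)<\varepsilon/4$; when they coincide, $d_\square(\widetilde\sigma_n,\widetilde\pi_n)=0$, so $\EE[d_\square(\widetilde\sigma_n,\widetilde\pi_n)]<\varepsilon/4$. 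Combining the three bounds gives $d_\square(\sigma_n,\pi_n)<3\varepsilon/4<\varepsilon$ for all sufficiently large $n$, proving the proposition.

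The only potentially subtle point is making the three bounds compatible on a common probability space (the outer ones come from independent Lemma~\ref{cor.amostra2} sampling, the middle one from a coupling); but since we only take expectations and the triangle inequality is pointwise, we can always build the joint distribution by taking $\widetilde\sigma_n$ and $\widetilde\pi_n$ from the coupling and keeping them independent of each other only at the level of the outer samplings — no difficulty arises. The selection of parameters, $k$ first (depending on $\varepsilon$) and $n$ second (depending on both $\varepsilon$ and $k$), is the only thing one must be careful about, and it works cleanly.
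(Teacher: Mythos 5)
Your proof is correct and follows essentially the same route as the paper's: a triangle inequality through the random subpermutations $\sub(k,\sigma_n)$ and $\sub(k,\pi_n)$, with the outer terms controlled by Lemma~\ref{cor.amostra2} and the middle term by a total-variation coupling of the two distributions on $S_k$ (the paper obtains the coupling bound via the explicit estimate $|t(\tau,\sigma_n)-t(\tau,\pi_n)|<1/(k+1)!$ for $n$ large, whereas you simply let the total variation tend to $0$ for fixed $k$; both are valid). The order of quantifiers ($k$ before $n$) and the remark that the lengths must tend to infinity are handled correctly.
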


\begin{proof}
Fix $(\sigma_n)_{n \in \mathbb{N}}$, $(\pi_n)_{n \in \mathbb{N}}$ and $Z$ as in the statement of the proposition.  By hypothesis, we know that the sequences $(t(\tau,\sigma_n))$ and $(t(\tau,\pi_n))$ converge to the same limit for every fixed permutation $\tau$. In particular, for every positive integer $m$, there is $n_0=n_0(m)$ for which the following holds: for every $n>n_0$, $|t(\tau,\sigma_n)-t(\tau,\pi_n)|<1/(m+1)!$ for every permutation $\tau$ on $[m]$.

Let $\eps>0$ and fix $m$ sufficiently large in terms of $\eps$. We may now proceed as in the proof of Lemma~\ref{lema.volta2} and write
$$d_{\square}(\sigma_n,\pi_n) \leq \EE(d_{\square}(\sigma_n,\sub(m,\sigma_n))) + \EE(d_{\square}(\sub(m,\sigma_n),\sub(m,\pi_n))) + \EE(d_{\square}(\pi_n,\sub(m,\pi_n))).$$ By Lemma~\ref{cor.amostra2}, we deduce that, with probability as close to one as wished, the the terms $d_{\square}(\sigma_n,\sub(m,\sigma_n))$ and $d_{\square}(\pi_n,\sub(m,\pi_n))$ in the previous upper bound are smaller than $\eps/3$. To show that $\EE\left(d_{\square}(\sub(m,\sigma_n),\sub(m,\pi_n))\right)$ is smaller than $\eps/3$, we choose $m \geq 3/\eps$ and we use the fact that
$$ \sum_{\tau:[m]\to[m]}\Big|\PP\big(\sigma(m,\sigma_n)=\tau\big)-\PP\big(\sigma(m,\pi_n)=\tau\big)\Big| = \sum_{\tau:[m]\to[m]} |t(\tau,\sigma_n)-t(\tau,\pi_n)|
  \ \leq\ m!\frac{1}{(m+1)!}\ \leq\ \frac{\varepsilon}{3}, $$
As a consequence, the random variables $\sub(m,\sigma_n)$ and $\sub(m,\pi_n)$ may be coupled in such a way that the probability that their outcome is distinct is at most $\eps/3$. Our result follows.
\end{proof}

To conclude this section, we now prove part (c). Let $Z_1$ and $Z_2$ be limit permutations and let $(\sigma_n)_{n \in \mathbb{N}}$ be a permutation sequence converging to $Z_1$, where $\lim_{n \to \infty}|\sigma_n|=\infty$.

If $d_{\square}(Z_1,Z_2)=0$, we know that, for any fixed $y \in [0,1]$, $Z_1(\cdot,y)$ and $Z_2(\cdot,y)$ differ only in a set of measure zero. Recall that $t(\tau,Z_1)$ is given by
$$t(\tau,Z)\ =\ m!\int_{[0,1]^m_<}\Big(\int_{[0,1]^m_<}dZ(x_{\tau^{-1}(1)},\cdot) \ \cdots \ dZ(x_{\tau^{-1}(m)},\cdot)\Big)
  \ dx_1\cdots dx_m,$$
with integration taken over the $m$-simplex $[0,1]^m_<$, the set of $m$-tuples $(y_1,\ldots,y_m)$ such that $0\leq y_1<y_2<\cdots<y_m \leq 1$. Each term $dZ(x_{\tau^{-1}(i)},\cdot)$ of the inner integral comes from the measure associated with the cdf $Z(x_{\tau^{-1}(i)},\cdot)$, and the order of the integrating factors in the product measure reflects the connection between the integration variable corresponding to $y_i$ and the measure associated with the cdf $Z(x_{\tau^{-1}(i)},\cdot)$.

With this definition, we may write $t(\tau,Z_1)-t(\tau,Z_2)$ as a telescopic sum:
\begin{equation}\label{aux2}
  \Big|t(\tau,Z_1)-t(\tau,Z_2)\Big|\ =\ m!\Big|\sum_{k=1}^{m}\Big(Y_{k}-Y_{k-1}\Big)\Big|
  \ \leq\ m!\sum_{k=1}^{m}\Big|Y_{k}-Y_{k-1}\Big|,
\end{equation}
where, for $0\leq k\leq m$,
\[
  Y_k=\int_{[0,1]^m_<}\Big(\int_{[0,1]^m_<} dZ_1(x_{\tau^{-1}(1)},\cdot) \ldots dZ_1(x_{\tau^{-1}(k)},\cdot)
  \ \ \ \ \ \ \ \ \ \ \ \ \ \ \ \ \ \ \
\]
\[
  \ \ \ \ \ \ \ \ \ \ \ \ \ \ \ \ \ \ \ \ \ \ \ \ \ \ \ \ \ \ \ \
  dZ_2(x_{\tau^{-1}(k+1)},\cdot)\cdots dZ_2(x_{\tau^{-1}(m)},\cdot)\Big)dx_1\cdots dx_m.
\]
For $1\leq k\leq m$, we now bound $|Y_{k}-Y_{k-1}|$.
Let $x^*=(x_1,\ldots,x_{k-1},x_{k+1},\ldots,x_m)\in[0,1]^{m-1}_<$, and let $Z(x^{*})$ be the measure on $[0,1]^{m-1}$ given by the product of the measures associated with the cdfs
$Z_1(x_1),\ldots,Z_1(x_{k-1}), Z_2(x_{k+1}),\ldots,Z_2(x_m)$.
Fubini's Theorem~(see, for instance,~\cite{loeve77}, Chapter II, \S 8.2) implies that
\[
  Y_k=\int_{[0,1]^{m-1}_<}\int_{[0,1]^{m-1}_<}
  \Big(\int_{x_{k-1}}^{x_{k+1}}\int_{\alpha_{\tau(k)-1}}^{\alpha_{\tau(k)+1}} dZ_1(x_{\tau^{-1}(k)},\cdot) \ dx_{k}\Big)
  \ dZ(x^*) \ dx^*,
\]
where, for convenience, we use $x_0=\alpha_0=0$ and $x_{m+1}=\alpha_{m+1}=1$. Further note that
\[
  Y_{k-1}=\int_{[0,1]^{m-1}_<}\int_{[0,1]^{m-1}_<}
  \Big(\int_{x_{k-1}}^{x_{k+1}}\int_{\alpha_{\tau(k)-1}}^{\alpha_{\tau(k)+1}} dZ_2(x_{\tau^{-1}(k)},\cdot)\ dx_{k}\Big)
  \ dZ(x^*) \ dx^*.
\]
This leads to
\[
  \Big|Y_{k}-Y_{k-1}\Big|\ =\ \Big|\int_{[0,1]^{m-1}_<}\int_{[0,1]^{m-1}_<}
  \Big(\int_{x_{k-1}}^{x_{k+1}}\int_{\alpha_{\tau(k)-1}}^{\alpha_{\tau(k)+1}}
  d\Big(Z_1-Z_2\Big)(x_{\tau^{-1}(k)},\cdot)\ dx_{k}\Big)\ dZ(x^*)\ dx^*\Big|
\]
\[
  = \int_{[0,1]^{m-1}_<}\int_{[0,1]^{m-1}_<} \int_{x_{k-1}}^{x_{k+1}}
  \lim_{y \to \alpha_{\tau(k)+1}}\Big(Z_1(x_{\tau^{-1}(k)},y)-Z_2(x_{\tau^{-1}(k)},y)\Big)\ dx_{k}\ \ dZ(x^*)\ dx^*
\]
\[- \int_{[0,1]^{m-1}_<}\int_{[0,1]^{m-1}_<}\int_{x_{k-1}}^{x_{k+1}}\Big(Z_1(x_{\tau^{-1}(k)},\alpha_{\tau(k)-1})-Z_2(x_{\tau^{-1}(k)},\alpha_{\tau(k)-1})\Big)\ dx_{k}\ dZ(x^*)\ dx^*.
\]
Now, in both terms of the above difference, the innermost integrand is zero almost everywhere, hence the integrals are equal to zero. As a consequence, we have $\lim_{n\to \infty} t(\tau,\sigma_n)= t(\tau,Z_1)=t(\tau,Z_2)$ for every permutation $\tau$, so that $Z_2$ is also a limit of the permutation sequence $(\sigma_n)$.

For the converse, suppose that both $Z_1$ and $Z_2$ are limits of the same permutation sequence $(\sigma_n)$, and let $\tau$ be a permutation on $[m]$. By definition, we have $\lim_{n\to \infty} t(\tau,\sigma_n)= t(\tau,Z_1)=t(\tau,Z_2)$, hence $\Big|t(\tau,Z_1)-t(\tau,Z_2)\Big|=0\leq 1/(m+1)!$. As $\tau$ is arbitrary, Lemma \ref{lema.volta2} leads to $d_{\square}(Z_1,Z_2)=0$. This establishes the validity of (c).

\section{Additional proofs}\label{section_technical}

In Section~\ref{section_convergence}, we have demonstrated Theorem~\ref{teorema_principal1} using a series of results that were stated without proof. These proofs are the objective of the present section. It is organized as follows. In the first subsection, we establish some analytical properties of limit permutations. Lemmas~\ref{lemaLS.5.2} and~\ref{lem.QtoZ} are then proved in Subsections~\ref{provaLS.5.2} and~\ref{provalem.QtoZ}, respectively. We conclude the section with the proofs of Lemma~\ref{lemaLS.5.1}.

\subsection{Properties of limit permutations}

The aim of this subsection is to study the set of discontinuities of the cumulative density functions generated by a limit permutation $Z=Z(x,y)$. It is known that, for every $x \in[0,1]$, the cdf $Z(x,\cdot)$ has a countable set of discontinuity points (see~\cite{loeve77}, Chapter IV). Here, we prove that, for each $\alpha\in[0,1]$, the set of points $x \in [0,1]$ for which $\alpha$ is a discontinuity of $Z(x,\cdot)$ has Lebesgue measure zero.

\begin{lemma}\label{Lema.DescNula}
Let $Z:[0,1]^2 \to [0,1]$ satisfy the properties of a limit permutation, with the exception, possibly, of the requirement that $Z(x,\cdot)$ is continuous at the point $0$ and that $Z(x,1)=1$ for every $x \in [0,1]$. Then, for every $\alpha\in[0,1]$, the set of points $x\in[0,1]$ for which the function $Z(x,\cdot)$ is discontinuous at the point $y=\alpha$ has measure zero.
\end{lemma}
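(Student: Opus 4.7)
The plan is to reduce the claim to a statement about the integral of the jump function via the mass condition. Fix $\alpha \in [0,1)$ (the case $\alpha=1$ is vacuous: a cdf is left-continuous at $1$ by definition, so there is nothing to prove). Define the jump function
\[
  J(x) \;=\; \lim_{y\to\alpha^+} Z(x,y) \;-\; Z(x,\alpha),
\]
which is non-negative by monotonicity of $Z(x,\cdot)$. The goal is to show $J=0$ almost everywhere.

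Next I would pick any sequence $y_n \downarrow \alpha$ with $y_n\in(\alpha,1]$, and integrate $Z(\cdot,y_n)$ and $Z(\cdot,\alpha)$ against $x$. The mass condition (b) in Definition~\ref{def.perm.lim} gives
\[
  \int_0^1 Z(x,y_n)\,dx = y_n, \qquad \int_0^1 Z(x,\alpha)\,dx = \alpha.
\]
For each fixed $x$, monotonicity of $Z(x,\cdot)$ forces $Z(x,y_n) \downarrow Z(x,\alpha^+):=\lim_{y\to\alpha^+}Z(x,y)$, and the pointwise limit function $x\mapsto Z(x,\alpha^+)$ is Lebesgue measurable as a decreasing limit of measurable sections (the measurability of each $Z(\cdot,y_n)$ is built into the mass condition itself, since the latter presupposes integrability). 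Since $|Z(x,y_n)|\le 1$ on $[0,1]$, the dominated convergence theorem yields
\[
  \int_0^1 Z(x,\alpha^+)\,dx \;=\; \lim_{n\to\infty}\int_0^1 Z(x,y_n)\,dx \;=\; \lim_{n\to\infty} y_n \;=\; \alpha.
\]

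Subtracting, $\int_0^1 J(x)\,dx = \alpha - \alpha = 0$, and since $J\ge 0$ this forces $J(x)=0$ for almost every $x\in[0,1]$. Thus the set of $x$ where $Z(x,\cdot)$ has a jump at $\alpha$ has Lebesgue measure zero, which is exactly the statement of the lemma. No step looks like a genuine obstacle: the only subtle point is confirming measurability of $Z(\cdot,\alpha^+)$, which is handled by writing it as a monotone pointwise limit of the measurable sections $Z(\cdot,y_n)$, sections whose measurability is guaranteed by hypothesis (b) rather than by Fubini-type arguments on $Z$ itself.
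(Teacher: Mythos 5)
Your proof is correct and is essentially identical to the paper's: both reduce the claim to integrating the non-negative jump $\lim_{y\to\alpha^+}Z(\cdot,y)-Z(\cdot,\alpha)$ against $x$, use the mass condition $\int_0^1 Z(x,y)\,dx=y$ together with dominated convergence to conclude that the integral is zero, and then invoke the fact that a non-negative function with zero integral vanishes almost everywhere. The only cosmetic differences are your explicit remark on the measurability of $Z(\cdot,\alpha^+)$ and your dispatching of $\alpha=1$ as vacuous rather than by extending the cdf to the real line; neither changes the argument.
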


\begin{proof} Let $\alpha\in[0,1]$. Since $Z(x,\cdot)$ is a monotonic and bounded function for every $x \in [0,1]$, the limit $\lim_{\alpha'\to\alpha+}Z(x,\alpha')$ exists for every $\alpha\in[0,1)$. In the case of $\alpha=1$, we may assume that $\lim_{\alpha'\to 1+}Z(x,\alpha')=1$ by extending the cdf to the real line.

By Lebesgue's Dominated Convergence Theorem (see~\cite{loeve77}, Chapter II, \S 7.2) we have
$$
  \int_0^1\lim_{\alpha'\to\alpha+}Z(x,\alpha')\ dx\ =\ \lim_{\alpha'\to\alpha+}\int_0^1 Z(x,\alpha')\ dx
  \ =\ \lim_{\alpha'\to\alpha+}\alpha'\ =\ \alpha.
$$
The second to last step is an immediate consequence of part (b) of Definition~\ref{def.perm.lim}. Thus
$$
  \int_0^1\Big(\lim_{\alpha'\to\alpha+}Z(x,\alpha')\ -\ Z(x,\alpha)\Big)\ dx\ =\ 0,
$$
and, since $\lim_{\alpha'\to\alpha+}Z(x,\alpha')-Z(x,\alpha) \geq 0$ for every $\alpha$, we conclude that the set of points $x\in[0,1]$ such that $\lim_{\alpha'\to\alpha+}Z(x,\alpha')-Z(x,\alpha)>0$  has measure zero. \end{proof}

Now, let $D_Z\subset[0,1]^2$ be the set of all pairs $(x_1,x_2)$ such that $Z(x_1,\cdot)$ and $Z(x_2,\cdot)$ have a common discontinuity. Our aim is to show that $D_Z$ has measure zero.

Since the countable union of sets with measure zero has measure zero, Lemma~\ref{Lema.DescNula} tells us that, for every $x_1\in[0,1]$, the set $D_Z(x_1)=\{x_2~:(x_1,x_2)\in D_Z\}$ has measure zero in the line. This fact suggests that $D_Z$ has measure zero in the plane, since all vertical sections have measure zero, which is indeed true because of Lemma~\ref{lema_uniform}. We now state this fact for future reference.
\begin{lemma}\label{lema.critico}
Given a limit permutation $Z$, $D_Z$ is Lebesgue measurable in $[0,1]^2$ and has measure zero.
\end{lemma}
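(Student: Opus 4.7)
The plan is to interpret $D_Z$ as the support of a natural function arising from the $Z$-random construction, and then use Lemma~\ref{lema_uniform} to show that this function integrates to zero. Let $(X_1,a_1)$ and $(X_2,a_2)$ be independent random pairs, each generated as in the definition of a $Z$-random permutation: $X_i$ is uniform on $[0,1]$ and, conditionally on $X_i=x_i$, $a_i$ has distribution $\mu_{x_i}$, the Lebesgue--Stieltjes probability measure associated with $Z(x_i,\cdot)$. Define
\[
g(x_1,x_2)\ =\ \PP\bigl(a_1=a_2\ \big|\ X_1=x_1,\ X_2=x_2\bigr)\ =\ \sum_{\alpha\in[0,1]}\mu_{x_1}(\{\alpha\})\,\mu_{x_2}(\{\alpha\}).
\]
Since $\mu_{x_i}(\{\alpha\})>0$ only at the (at most countably many) atoms of $Z(x_i,\cdot)$, the sum is well-defined, and $g(x_1,x_2)>0$ if and only if $Z(x_1,\cdot)$ and $Z(x_2,\cdot)$ share a common atom. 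Hence $D_Z=\{g>0\}$.

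The first step is to prove that $g$ is Borel measurable (and therefore that $D_Z$ is). For $n\geq 1$ and $k\in[2^n]$, let $I_{n,k}=[(k-1)/2^n,\,k/2^n)$ and set
\[
h_n(x_1,x_2)\ =\ \sum_{k=1}^{2^n}\bigl(Z(x_1,k/2^n)-Z(x_1,(k-1)/2^n)\bigr)\bigl(Z(x_2,k/2^n)-Z(x_2,(k-1)/2^n)\bigr).
\]
Each $h_n$ is Borel as a finite sum of products of Borel functions, and $h_n(x_1,x_2)=(\mu_{x_1}\times\mu_{x_2})(D_n)$, where $D_n=\bigcup_{k=1}^{2^n}I_{n,k}\times I_{n,k}$ is a decreasing sequence of sets with intersection equal to the diagonal $\Delta=\{(y,y):y\in[0,1)\}$. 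Continuity of finite measures from above yields $h_n(x_1,x_2)\to(\mu_{x_1}\times\mu_{x_2})(\Delta)=g(x_1,x_2)$ pointwise, so $g$ is Borel measurable.

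To conclude, I integrate $g$ over $[0,1]^2$. Independence of $(X_1,a_1)$ and $(X_2,a_2)$, together with Fubini's theorem, gives
\[
\int_{[0,1]^2}g(x_1,x_2)\,dx_1\,dx_2\ =\ \EE\bigl[\PP(a_1=a_2\mid X_1,X_2)\bigr]\ =\ \PP(a_1=a_2).
\]
By Lemma~\ref{lema_uniform} each $a_i$ is uniform on $[0,1]$, and since $(X_1,a_1)$ and $(X_2,a_2)$ are independent, so are $a_1$ and $a_2$. Thus $\PP(a_1=a_2)=0$, and since $g\geq 0$ the identity above forces $g=0$ almost everywhere, i.e., $D_Z$ has Lebesgue measure zero.

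The main obstacle is measurability of $D_Z$: its definition quantifies existentially over an uncountable family of potential common atoms $\alpha$, so $D_Z$ is not manifestly Borel. The dyadic-approximation step circumvents this by encoding ``atom overlap'' as the diagonal mass of the product measure $\mu_{x_1}\times\mu_{x_2}$, which is a decreasing limit of jointly measurable quantities. Once measurability is secured, Fubini combined with Lemma~\ref{lema_uniform} closes the argument immediately.
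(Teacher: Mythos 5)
Your argument is correct, and it follows the same underlying idea the paper gestures at: the paper's discussion preceding Lemma~\ref{lema.critico} observes that all vertical sections of $D_Z$ are null (via Lemma~\ref{Lema.DescNula}) and then asserts the planar conclusion ``because of Lemma~\ref{lema_uniform},'' i.e.\ because $a_1$ and $a_2$ are independent and uniform, so $\PP(a_1=a_2)=0$. You have made exactly this probabilistic step precise by writing the mass of common atoms as the jointly measurable function $g(x_1,x_2)=(\mu_{x_1}\times\mu_{x_2})(\Delta)$, identifying $D_Z=\{g>0\}$, and noting $\int g=\PP(a_1=a_2)=0$.

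Where you go beyond the paper's sketch is the measurability of $D_Z$, which the paper does not address at all even though it is the genuine content of the statement (``$D_Z$ is Lebesgue measurable''). A set defined by an existential quantifier over an uncountable index is not obviously measurable, and your dyadic-diagonal approximation $h_n\downarrow g$ is a clean way to close that gap. One small point worth flagging: you call $h_n$ and $g$ ``Borel,'' but $Z$ is only assumed Lebesgue measurable; what the argument actually needs (and what Definition~\ref{def.perm.lim}(b) implicitly provides, since the integrals $\int_0^1 Z(x,y)\,dx$ must make sense) is that each section $Z(\cdot,y)$ is Lebesgue measurable, which then makes $h_n$ and $g$ Lebesgue rather than Borel measurable. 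Also, your identification of discontinuities of $Z(x,\cdot)$ with atoms of $\mu_x$ uses the convention that the cdfs are left-continuous with $Z(x,0)=0$, $Z(x,1)=1$, so there are no boundary atoms hiding at $0$ or $1$; you should cite part (a) of Definition~\ref{def.perm.lim} to justify this. With those two cosmetic fixes, your proof is a complete and correct filling-in of the paper's sketch.
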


\subsection{The proof of Lemma~\ref{lemaLS.5.2}}\label{provaLS.5.2}

We first remind the reader of the statement of Lemma~\ref{lemaLS.5.2}.

\addtocounter{section}{-3}
\addtocounter{theorem}{4}

\begin{lemma}
Let $(k_m)_{m \in \mathbb{N}}$ be a sequence of positive integers and let $(Q_m)_{m \in \mathbb{N}}$ be weighted permutations $Q_m:[k_m]^2\to[0,1]$ satisfying the conditions of Lemma~\ref{lemaLS.5.1}.
Then there exists a limit permutation $Z:[0,1]^2\to[0,1]$ such that,  
\begin{itemize}
\item[(i)] for almost all $(x,y)\in[0,1]^2$, $Z_{Q_m}(x,y)\longrightarrow Z(x,y)$ as $m$ tends to infinity;
\item[(ii)] for almost all $x\in[0,1]$, $Z_{Q_m}(x,\cdot)\overset{w}{\longrightarrow} Z(x,\cdot)$ as $m$ tends to infinity;
\item[(iii)] for every $m$ and $i,j\in[k_m]$,
$$Q_m(i,j) = k_m^2\int_{(i-1)/k_m}^{i/k_m}\int_{(j-1)/k_m}^{j/k_m}Z(x,y)~dx ~dy.$$
\end{itemize}
\end{lemma}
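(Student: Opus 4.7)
The plan is to recognize that, by condition~(i) of Lemma~\ref{lemaLS.5.1}, the sequence $(Z_{Q_m})_{m\geq 1}$ is a bounded martingale on the probability space $([0,1]^2,\lambda)$ with respect to the filtration $(\mathcal{F}_m)$, where $\mathcal{F}_m$ is the $\sigma$-algebra generated by the partition of $[0,1]^2$ into the $k_m\times k_m$ congruent cells $C_{i,j}^m=((i-1)/k_m,i/k_m]\times((j-1)/k_m,j/k_m]$. The filtration is increasing because $k_j\mid k_m$ for $j<m$, and the fact that $Q_j$ is obtained from $Q_m$ by arithmetic averaging over $(k_m/k_j)\times(k_m/k_j)$-blocks translates exactly into $\mathbb{E}[Z_{Q_m}\mid\mathcal{F}_j]=Z_{Q_j}$.

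Since $\|Z_{Q_m}\|_\infty\leq 1$, the martingale convergence theorem produces a measurable $Z:[0,1]^2\to[0,1]$ with $Z_{Q_m}\to Z$ both almost everywhere and in $L^1(\lambda)$, which yields~(i). Part~(iii) is then immediate: for any cell $C_{i,j}^m\in\mathcal{F}_m$, the martingale property gives $\int_{C_{i,j}^m}Z_{Q_n}\,d\lambda=\int_{C_{i,j}^m}Z_{Q_m}\,d\lambda=k_m^{-2}Q_m(i,j)$ for every $n\geq m$, and the $L^1$ convergence lets me pass to the limit in $n$.

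The main obstacle is upgrading the raw a.e.\ limit $Z$ to a function satisfying the pointwise limit-permutation conditions~(a) and~(b) for \emph{every} $x$ and \emph{every} $y$, rather than just for almost every. I would first invoke Fubini to obtain a set $X_0\subseteq[0,1]$ of full measure such that, for $x\in X_0$, there is a set $Y_x\subseteq[0,1]$ of full measure on which $Z_{Q_m}(x,y)\to Z(x,y)$. On $Y_x$ the function $y\mapsto Z(x,y)$ is non-decreasing because each $Z_{Q_m}(x,\cdot)$ is non-decreasing (weighted-permutation axiom~(a)), so I can replace $Z(x,\cdot)$ by its left-continuous envelope $\tilde Z(x,y):=\sup\{Z(x,y'):y'<y,\,y'\in Y_x\}$ for $x\in X_0$, and redefine $Z$ freely on $([0,1]\setminus X_0)\times[0,1]$ so that every fiber is a cdf continuous at~$0$ and equal to~$1$ at $y=1$. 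Since the modification is on a measure-zero subset of $[0,1]^2$, neither~(i) nor~(iii) is disturbed.

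Finally, for the mass condition~(b), fix $y\in[0,1]$ and set $j_m:=\lceil k_m y\rceil$; weighted-permutation axiom~(b) yields $\int_0^1 Z_{Q_m}(x,y)\,dx=k_m^{-1}\sum_{i=1}^{k_m}Q_m(i,j_m)\in[(j_m-1)/k_m,\,j_m/k_m]$, and both endpoints converge to $y$. For $y$ in a full-measure set, dominated convergence then gives $\int_0^1 Z(x,y)\,dx=y$, and I would extend this identity to every $y$ by choosing $y_n\uparrow y$ from this good set and applying the left-continuity of $Z(x,\cdot)$ together with monotone convergence. For~(ii), the fiberwise a.e.\ convergence, combined with the monotonicity of $Z_{Q_m}(x,\cdot)$ and of $Z(x,\cdot)$, forces pointwise convergence at every continuity point of the limit cdf, which is exactly the definition of weak convergence.
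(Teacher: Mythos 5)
Your proof is correct and follows essentially the same route as the paper: martingale convergence for~(i), a Fubini-plus-monotone-envelope argument to extract a family of cdfs and deduce~(ii), the martingale/averaging structure together with $L^1$ convergence for~(iii), and the weighted-permutation axiom~(b) for the mass condition. The only detail differences are cosmetic — the paper carries out the fiberwise cdf extraction by fixing a single countable dense set $\Psi\subset[0,1]$ and defining $Z(x,\cdot)$ as the weak limit for $x$ outside $\Gamma_\Psi$, which sidesteps the $x$-dependent sets $Y_x$ in your envelope construction; and it verifies the mass condition by a direct sandwich argument from~(iii) rather than a density-plus-left-continuity passage. One small omission: you redefine $Z$ only on $([0,1]\setminus X_0)\times[0,1]$ to guarantee continuity at~$0$ and $Z(x,1)=1$, but the left-continuous envelope $\tilde Z(x,\cdot)$ can also fail these two conditions for some $x\in X_0$; the paper observes (and you would need to add) that the set of such $x$ has measure zero, so that those fibers too may be overwritten by the uniform cdf without disturbing~(i), (ii), or~(iii).
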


\addtocounter{section}{3}
\addtocounter{theorem}{-5}

The following lemma is an important step in the proof of Lemma~\ref{lemaLS.5.2}.

\begin{lemma}\label{rud.Zweak}
Let $(Q_m)_{m \in \mathbb{N}}$ be a sequence of weighted permutations $Q_m:[k_m]^2\to[0,1]$.
Assume that, for almost all $x,y\in[0,1]$, $Z_{Q_m}(x,y)$ converges as $m$ tends to infinity.
Then there exists a Lebesgue measurable function $Z:[0,1]^2\to[0,1]$ with the folowing properties:
\begin{itemize}
\item[(a)] for every $x\in[0,1]$, $Z(x,\cdot)$ is a cdf;
\item[(b)] $Z_{Q_m}(x,y)\to Z(x,y)$ for almost all $(x,y)\in[0,1]^2$;
\item[(c)] $Z_{Q_m}(x,\cdot)\overset{w}{\longrightarrow} Z(x,\cdot)$ for almost all $x\in[0,1]$.
\end{itemize}
\end{lemma}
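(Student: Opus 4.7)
The plan is to first define a candidate limit via the pointwise convergence supplied by the hypothesis, then replace each horizontal slice by its left-continuous envelope computed over rationals, and finally verify (a), (b), (c) by monotonicity sandwich arguments. Let $A\subseteq[0,1]^2$ be the measurable set on which $Z_{Q_m}$ converges; by hypothesis its complement is null. Set $Z_0(x,y)=\lim_m Z_{Q_m}(x,y)$ on $A$ and $Z_0\equiv 0$ off $A$; this is Lebesgue measurable and $[0,1]$-valued. Fubini applied to $[0,1]^2\setminus A$ shows that $A_x=\{y:(x,y)\in A\}$ has full measure for a.e.\ $x$, and applied the other way shows that for each fixed rational $q$ the fiber $\{x:(x,q)\notin A\}$ is null. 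Removing the countable union of these exceptional sets, I obtain a full-measure $X_0\subseteq[0,1]$ such that for every $x\in X_0$ the slice $A_x$ has full measure and $Z_0(x,q)$ is defined for every rational $q\in[0,1]$.

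For $x\in X_0$ define
\[
  Z(x,y)\ =\ \sup\{Z_0(x,q):q\in\mathbb{Q},\ 0<q<y\}\ \text{ for }y>0,\qquad Z(x,0)=0,
\]
with the convention that a supremum over an empty index set is $0$. For $x\notin X_0$ set $Z(x,y)=y$, say, which is a cdf. Monotonicity of each $Z_{Q_m}(x,\cdot)$ passes to the limit, so $Z_0(x,\cdot)$ is non-decreasing on $A_x$; hence $Z(x,\cdot)$ is non-decreasing, and it is left-continuous by construction since the sup is over rationals strictly less than $y$. Thus $Z(x,\cdot)$ is a cdf, establishing (a). Measurability of $Z$ follows by writing $Z(x,y)=\lim_{N\to\infty}\max\{Z_0(x,q):q\in\mathbb{Q}_N,\ q<y\}$, where $\mathbb{Q}_N$ is a finite enumeration of rationals, so that $Z$ is a countable supremum of measurable functions on $X_0\times[0,1]$, and is set to a measurable function on the null complement.

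For (b), fix $x\in X_0$. For any $y\in A_x$ and rationals $q<y<q'$ in $A_x$, monotonicity gives $Z_{Q_m}(x,q)\le Z_{Q_m}(x,y)\le Z_{Q_m}(x,q')$; passing to the limit in $m$ and then taking the supremum over $q<y$ yields $Z(x,y)\le Z_0(x,y)$, while letting $q'\downarrow y$ through rationals gives $Z_0(x,y)\le\lim_{q'\downarrow y}Z(x,q')$. The right-hand side equals $Z(x,y)$ whenever $y$ is a continuity point of $Z(x,\cdot)$, and since $Z(x,\cdot)$ is monotone its discontinuity set in $y$ is countable. Hence for each $x\in X_0$, $Z_0(x,y)=Z(x,y)$ for a.e.\ $y$, and Fubini gives $Z_0=Z$ a.e.\ on $[0,1]^2$, so $Z_{Q_m}\to Z$ a.e. For (c), given $x\in X_0$ and a continuity point $y$ of $Z(x,\cdot)$, the density in $[0,1]$ of points in $A_x$ that are also continuity points of $Z(x,\cdot)$ lets me pick such $y_1<y<y_2$ with $|Z(x,y_i)-Z(x,y)|<\varepsilon$. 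From (b) I already know $Z_{Q_m}(x,y_i)\to Z(x,y_i)$, and monotonicity gives $Z_{Q_m}(x,y_1)\le Z_{Q_m}(x,y)\le Z_{Q_m}(x,y_2)$. Taking $\liminf$ and $\limsup$ and sending $\varepsilon\to 0$ gives $Z_{Q_m}(x,y)\to Z(x,y)$, which is weak convergence $Z_{Q_m}(x,\cdot)\overset{w}{\to}Z(x,\cdot)$.

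The main obstacle is bookkeeping null sets so that $Z(x,\cdot)$ is genuinely a cdf for \emph{every} $x$ (not just almost every one), while keeping $Z$ jointly measurable. Using the countable rational scaffold both to define the left-continuous envelope and as the test family in the sandwich argument sidesteps this issue uniformly; once the scaffold is in place, (a)--(c) follow by standard monotonicity and continuity-point arguments.
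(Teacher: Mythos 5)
Your overall strategy (build the limit slice by slice from its values on a countable dense scaffold, then use monotone sandwich arguments for (b) and (c)) is essentially the paper's strategy; the paper packages the sandwich step as the classical fact that pointwise convergence of cdfs on a dense set forces weak convergence to a unique cdf, while you carry it out by hand via the left-continuous envelope. However, there is a genuine gap at the point where you set up the scaffold. You claim that Fubini, applied to the null set $[0,1]^2\setminus A$, shows that ``for each fixed rational $q$ the fiber $\{x:(x,q)\notin A\}$ is null.'' Fubini only gives this for \emph{almost every} $y$, and the rationals are themselves a null set of $y$-values, so no conclusion about them follows. Concretely, $A=[0,1]\times([0,1]\setminus\mathbb{Q})$ has full measure while every rational fiber $\{x:(x,q)\notin A\}$ is all of $[0,1]$. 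The error is not cosmetic: your definition $Z(x,y)=\sup\{Z_0(x,q):q\in\mathbb{Q},\,q<y\}$ and your sandwich argument for (b) both require rationals $q<y<q'$ lying in $A_x$ arbitrarily close to $y$; if $A_x$ misses the rationals near $y$, the sup is computed from the default value $0$ and the inequality $Z(x,y)\le Z_0(x,y)\le Z(x,y^+)$ is no longer justified.

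The repair is exactly the device used in the paper: let $\Delta$ be the null set of bad $y$'s produced by Fubini, note that $[0,1]\setminus\Delta$ is separable (as a subspace of a separable metric space) and has full measure, hence is dense, and choose a countable dense subset $\Psi\subseteq[0,1]\setminus\Delta$. For each $y\in\Psi$ the fiber $\Gamma_y=\{x:(x,y)\notin A\}$ really is null, so $\Gamma_\Psi=\bigcup_{y\in\Psi}\Gamma_y$ is null, and for $x\notin\Gamma_\Psi$ the limit $Z_0(x,q)$ exists for every $q\in\Psi$. Replacing $\mathbb{Q}$ by $\Psi$ throughout your construction (in the envelope defining $Z$, in the finite enumerations used for measurability, and in the sandwich arguments for (b) and (c)), the rest of your proof goes through unchanged.
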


\begin{proof} By hypothesis, for almost all $(x,y)$, the limit
$$
Z(x,y)=\lim_{m\to\infty}Z_{Q_m}(x,y)
$$
exists. As a consequence, for almost all $y\in[0,1]$, the sequence $(Z_{Q_m}(x,y))$ converges, as $m \to \infty$, for almost all $x\in[0,1]$.
In other words, there exists $\Delta\subset[0,1]$ with measure zero for which, if $y\in[0,1]\backslash\Delta$, then there is $\Gamma_y\subset[0,1]$ with measure zero such that $Z_{Q_m}(x,y)$ converges whenever $x\in[0,1]\backslash\Gamma_y$.

Note that the set $[0,1]\backslash\Delta$ is separable, since separability is hereditary for metric spaces and $[0,1]$ is separable. Let $\Psi$ be a countable set that is dense in $[0,1]\backslash\Delta$. Clearly,
$\Psi$ has measure zero, since it is countable, and is dense in $[0,1]$, since $\Delta$ has measure zero.

We consider the set $\Gamma_{\Psi}=\bigcup_{y\in\Psi}\Gamma_y$, which has measure zero for being the countable union of sets of measure zero. For every $x\in[0,1]\backslash\Gamma_{\Psi}$, we know that $Z_{Q_m}(x,y)$ converges to $Z(x,y)$ for every $y\in\Psi$.
Moreover, as the columns of $Q_m$ are non-decreasing, we also know that, for almost all $x\in[0,1]$,
the sequence of cdfs $(Z_{Q_m}(x,\cdot))_{m \in \mathbb{N}}$ converges pointwise in $\Psi$, which is dense in $[0,1]$. It is a fact that pointwise convergence of a cdf in a dense subset of a set implies weak convergence in the entire set, so that, for every $x\in[0,1]\backslash\Gamma_{\Psi}$, the sequence of cdfs $(Z_{Q_m}(x,\cdot))_{m \in \mathbb{N}}$ converges weakly to a function  $F_x(\cdot)$ as $m$ tends to infinity, and this limit is unique (for a proof of these facts, see~\cite{loeve77}, Chapter IV, \S 11.2).

For $x\in[0,1]\backslash \Gamma_\Psi$ and $y\in[0,1]$, we set $Z(x,y)=F_x(y)$. So far, for almost all $x\in[0,1]$ (recall that $\Gamma_{\Psi}$ has measure zero), we have
$$
Z_{Q_m}(x,\cdot)\overset{w}{\longrightarrow}Z(x,\cdot).
$$
Now, for $x\in\Gamma_{\Psi}$, set $Z(x,y)=y$, for every $y\in[0,1]$.
This fully defines a function $Z$ over $[0,1]^2$, which is Lebesgue measurable,
since it is obtained by pointwise convergence of the measurable functions $Z_{Q_m}$. It is clear by construction that the properties (a), (b) and (c) in the statement of the theorem are satisfied. \end{proof}

\begin{proof}[Proof of Lemma~\ref{lemaLS.5.2}] Lemma 5.3 in~\cite{lovasz06} ensures that, if $X$ and $Y$ are independent uniform random variables in $(0,1]$,
then the sequence $(Z_{Q_m}(X,Y))_{m \in \mathbb{N}}$ is a martingale. Because this martingale is bounded, the Martingale Convergence Theorem, see ~\cite{doob53}, Chapter VII, implies that $\lim_{m\to\infty}Z_{Q_m}(X,Y)$ exists with probability one. Equivalently,
$Z_{Q_m}(x,y)$ converges for almost all $(x,y)\in[0,1]^2$. By Lemma~\ref{rud.Zweak} in the present paper, there is a measurable function $Z:[0,1]^2\to[0,1]$ such that $Z(x,\cdot)$ is a cdf for every $x\in[0,1]$ for which conditions (i) and (ii) in the statement of Lemma~\ref{lemaLS.5.2} hold.

We now prove condition (iii). Observe that, because the sequence $(Q_j)_{j\in \mathbb{N}}$ satisfies property (i) in Lemma~\ref{lemaLS.5.1}, we have, for $i,j\in[k_m]$,
\begin{equation*}
\begin{split}
  Q_m(i,j) &= \lim_{n\to\infty}\Big(\frac{k_m}{k_n}\Big)^2
  \sum_{x=(i-1)(k_n/k_m)+1}^{i(k_n/k_m)} \sum_{y=(j-1)(k_n/k_m)+1}^{j(k_n/k_m)}Q_n(x,y)\\
&  =k_m^2\lim_{n\to\infty}\int_{(i-1)/k_m}^{i/k_m}\int_{(j-1)/k_m}^{j/k_m}Q_n(\phi_n(x),\phi_n(y))dx\ dy\\
& =k_m^2\lim_{n\to\infty}\int_{(i-1)/k_m}^{i/k_m}\int_{(j-1)/k_m}^{j/k_m}Z_{Q_n}(x,y)dx\ dy\\
&  =\ k_m^2\int_{(i-1)/k_m}^{i/k_m}\int_{(j-1)/k_m}^{j/k_m}Z(x,y)dx\ dy.
\end{split}
\end{equation*}
The last step follows by the Dominated Convergence Theorem, as $Z_{Q_n}$ is bounded and converges to $Z$ almost everywhere.

It remains to show that $Z$ is a limit permutation. We have already verified that, for every $x\in[0,1]$, $Z(x,\cdot)$ is a cdf. We now show that condition (b) in Definition~\ref{def.perm.lim} holds. To this end, suppose for a contradiction that $Z$ does not satisfy this condition. In other words, there exists $y\in[0,1]$ such that
\[
  \int_0^1 Z(x,y)\ dx\ =\ y+\Delta,\mbox{ for some }-y<\Delta<1-y,\mbox{ where }|\Delta|>0.
\]
First assume that $y\notin \{0,1\}$. Fix $k_m$ such that $k_m>\max\{12/|\Delta|,2/y,2/(1-y)\}$. By property (iii), given $j' \in [k_m]$, we have
\[
  \sum_{i=1}^{k_m} Q_m(i,j')
  =\sum_{i=1}^{k_m} k_m^2\int_{(i-1)/k_m}^{i/k_m}\int_{(j'-1)/k_m}^{j'/k_m}Z(x,y)dx\ dy
  =k_m^2\int_0^1\int_{(j'-1)/k_m}^{j'/k_m}Z(x,y)dx\ dy.
\]
Fix $j$ such that $\frac{j}{k_m}\leq y\leq\frac{j+1}{k_m}$. Now, using property (b) in the definition of weighted permutation, we deduce that
\begin{equation}\label{eq.LS5.2a}
  \frac{j-1}{k_m^2}\leq\ \int_0^1\int_{(j-1)/k_m}^{j/k_m}Z(x,y)dx\ dy\ \leq\frac{j}{k_m^2}
  \ \leq\ \int_0^1\int_{j/k_m}^{(j+1)/k_m}Z(x,y)dx\ dy\ \leq\frac{j+1}{k_m^2}.
\end{equation}
Since $Z(x,\cdot)$ is non-decreasing for every $x\in[0,1]$, it is easy to see that
\begin{equation}\label{eq.LS5.2b}
  \frac{1}{1/k_m}\int_{(j-1)/k_m}^{j/k_m} Z(x,y)dy\ \leq\ Z\Big(x,\frac{j}{k_m}\Big)\
  \leq\ \frac{1}{1/k_m}\int_{j/k_m}^{(j+1)/k_m} Z(x,y)dy.
\end{equation}
Combining equations~(\ref{eq.LS5.2a}) and~(\ref{eq.LS5.2b}), we have
\begin{equation}\label{eq.LS5.2c}
\frac{j-1}{k_m}\leq\ \int_0^1Z\Big(x,\frac{j}{k_m}\Big)dx\ \leq\frac{j+1}{k_m}.
\end{equation}

Since, by our choice of $j$ and $k_m$, we have $1\leq j \leq k_m y < k_m-2$, equation~(\ref{eq.LS5.2c}) may be rewritten as
\begin{equation}\label{eq.LS5.2d}
  \frac{j-1}{k_m}\leq\ \int_0^1Z\Big(x,\frac{j}{k_m}\Big)dx\ \leq\frac{j+1}{k_m}
  \leq\ \int_0^1Z\Big(x,\frac{j+2}{k_m}\Big)dx\ \leq\frac{j+3}{k_m}.
\end{equation}
Using this together with the fact that $Z(x,\cdot)$ is non-decreasing for every $x\in[0,1]$, as well as our choice of $j$, we have
\begin{equation*}
\begin{split}
 y-\frac{2}{k_m}&\leq
  \frac{j-1}{k_m}\leq\ \int_0^1Z\Big(x,\frac{j}{k_m}\Big)dx\ \leq \int_0^1 Z(x,y)\ dx\\
 & \leq\ \int_0^1Z\Big(x,\frac{j+2}{k_m}\Big)dx\ \leq\frac{j+3}{k_m}\leq y+\frac{3}{k_m}.
\end{split}
\end{equation*}
However, this leads to
\[
  y-\frac{|\Delta|}{4}\ <\ y-\frac{3}{k_m}\ \leq\
  \int_0^1 Z(x,y)\ dx\ \leq\ y+\frac{3}{k_m}\ <\ y+\frac{|\Delta|}{4},
\]
contradicting the fact that
$$\int_0^1 Z(x,y)\ dx\ =\ y+\Delta.$$
If $y=0$ or $y=1$, we may use the same type of argument, but we only need to bound the value of $\displaystyle{\int_0^1 Z(x,y)\ dx\ }$ from one side. This establishes part (b) in Definition~\ref{def.perm.lim}.

Furthermore, we may suppose that, for every $x \in [0,1]$, $Z(x,\cdot)$ is continuous at the point $y=0$ and $Z(x,1)=1$, otherwise we redefine $Z$: $Z(x,\cdot)$ is replaced by the uniform cdf $Z_u(y)=y$. Observe that this affects only a set of measure zero: for $y=0$, this is an immediate consequence of Lemma~\ref{Lema.DescNula}; for $y=1$, fix $\eps>0$ and consider the set $A_{\eps}=\{x \in [0,1]: Z(x,1) \leq 1-\eps\}$. By definition, it is clear that
$$\int_{0}^1 (1-Z(x,1))~dx \geq \eps \lambda(A_{\eps}),$$
which implies that $1=\int_{0}^1 Z(x,1)~dx \leq 1 - \eps \lambda(A_{\eps}) \leq 1$. Hence $\lambda(A_{\eps})=0$, and, as $\eps>0$ is arbitrary, we indeed have that $Z(x,1)=1$ for almost all $x \in [0,1]$.

Clearly, properties (i), (ii) and (iii) are not affected by this change.
\end{proof}

\subsection{The proof of Lemma~\ref{lem.QtoZ}} \label{provalem.QtoZ}
Recall that the objective of this section is to prove the continuity of subpermutation density under the limit of Lemma~\ref{lemaLS.5.2}.

\addtocounter{section}{-3}
\addtocounter{theorem}{4}

\begin{lemma}
Let $(k_m)_{m \in \mathbb{N}}$ be a sequence of positive integers and let $(Q_m)_{m \in \mathbb{N}}$ be weighted permutations with the properties of Lemma~\ref{lemaLS.5.2}.
Then the function $Z$ given by Lemma~\ref{lemaLS.5.2} satisfies
$$\lim_{n \to \infty} t(\tau,Q_n)=t(\tau,Z) \textrm{ for every permutation }\tau.$$
\end{lemma}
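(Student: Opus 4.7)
The plan is to interpolate between the weighted permutations $Q_n$ and the limit $Z$ via the associated step functions $Z_{Q_n}$, combining the almost-everywhere weak convergence $Z_{Q_n}(x,\cdot) \overset{w}{\longrightarrow} Z(x,\cdot)$ supplied by Lemma~\ref{lemaLS.5.2}(ii) with a Portmanteau-type argument on the open simplex, and then invoking the Dominated Convergence Theorem. Specifically, Lemma~\ref{lema_auxiliar} already implies $|t(\tau,Q_n) - t(\tau,Z_{Q_n})| \to 0$ since $k_n \to \infty$ while $m = |\tau|$ is fixed, so it suffices to establish
\[
t(\tau,Z_{Q_n}) = m!\int_{[0,1]^m_<} L_{\tau,Z_{Q_n}}(x)\,dx \;\longrightarrow\; m!\int_{[0,1]^m_<} L_{\tau,Z}(x)\,dx = t(\tau,Z).
\]
Because every core function takes values in $[0,1]$, the Dominated Convergence Theorem reduces matters to verifying that $L_{\tau,Z_{Q_n}}(x) \to L_{\tau,Z}(x)$ for almost every $x \in [0,1]^m_<$.

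To obtain this pointwise convergence, I would fix $x = (x_1,\ldots,x_m)$ outside the null set provided by Lemma~\ref{lemaLS.5.2}(ii), so that each of the cdfs $Z_{Q_n}(x_i,\cdot)$ converges weakly to $Z(x_i,\cdot)$. The associated probability measures on $[0,1]$ then converge weakly in each coordinate, which implies, by a standard product-of-weak-convergences argument, that their product measures on $[0,1]^m$ also converge weakly. The core function $L_{\tau,Z}(x)$ is precisely the measure of the open set $[0,1]^m_<$ with respect to the limiting product measure $dZ(x_{\tau^{-1}(1)},\cdot)\otimes\cdots\otimes dZ(x_{\tau^{-1}(m)},\cdot)$, and by the Alexandrov Portmanteau theorem the corresponding integrals will converge as soon as the topological boundary of $[0,1]^m_<$ has zero mass under this limiting product measure.

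The main obstacle is therefore verifying this boundary condition, which is nontrivial because the cdfs $Z(x_i,\cdot)$ can carry atoms. The boundary of $[0,1]^m_<$ inside $[0,1]^m$ is the union of the diagonals $\{y_i = y_j\}$ over pairs $i \neq j$ together with the faces $\{y_1 = 0\}$ and $\{y_m = 1\}$. By Fubini, the product measure of $\{y_i = y_j\}$ equals $\sum_{y^*} \Delta Z(x_{\tau^{-1}(i)}, y^*)\,\Delta Z(x_{\tau^{-1}(j)}, y^*)$, which vanishes whenever $Z(x_{\tau^{-1}(i)},\cdot)$ and $Z(x_{\tau^{-1}(j)},\cdot)$ share no common atom. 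Lemma~\ref{lema.critico} guarantees that the set $D_Z \subset [0,1]^2$ of pairs with a common discontinuity has two-dimensional Lebesgue measure zero; a union bound over the finitely many pairs $(i,j)$ ensures that this property holds simultaneously for all pairs for almost every $x \in [0,1]^m$. The faces $\{y_1 = 0\}$ and $\{y_m = 1\}$ contribute no mass either, since the definition of a limit permutation forces $Z(x_i,\cdot)$ to be continuous at $0$ and to satisfy $Z(x_i,1) = 1$, precluding atoms at those extreme points. Restricting $x$ to the intersection of these full-measure sets, the Portmanteau argument applies and yields $L_{\tau,Z_{Q_n}}(x) \to L_{\tau,Z}(x)$ almost everywhere, concluding the proof via the Dominated Convergence Theorem.
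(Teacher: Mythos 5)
Your proposal is correct and follows essentially the same route as the paper: the paper's argument also reduces to showing $L_{\tau,Z_{Q_n}}(x)\to L_{\tau,Z}(x)$ almost everywhere (its Lemma~\ref{app.portmanteau}), proved exactly via the Multivariate Helly--Bray Theorem, Alexandrov's Portmanteau Theorem, and the zero-mass boundary verification through Lemma~\ref{lema.critico}, followed by the Dominated Convergence Theorem. The only cosmetic difference is that you invoke Lemma~\ref{lema_auxiliar} to pass from $t(\tau,Q_n)$ to $t(\tau,Z_{Q_n})$, whereas the paper redoes that Riemann-sum discretization by hand inside the proof; both are valid.
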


\addtocounter{section}{3}
\addtocounter{theorem}{-5}

The proof of this result relies on the following auxiliary lemma, which depends on measure-theoretical machinery such as Alexandrov's Portmanteau Theorem and the Multivariate Helly--Bray Theorem, whose statetements are included below (a reference for both theorems is~\cite{loeve77}, Chapter IV).

\begin{lemma}\label{app.portmanteau}
Let $Z$ be a limit permutation. Consider a sequence of weighted permutations $(Q_n)_{n \in \mathbb{N}}$, $Q_n:[k_n]^2\to[0,1]$, such that
$Z_{Q_n}(x,y)\to Z(x,y)$ for almost all $x,y\in[0,1]$ and
$Z_{Q_n}(x,\cdot)\overset{w}{\longrightarrow}Z(x,\cdot)$ for almost all $x\in[0,1]$.
For $m>1$, let $\tau:[m]\to[m]$ be a permutation.
Then, for almost all $x\in[0,1]^m$, we have
\begin{equation}\label{eq.[0,1]good}
  \lim_{n\to\infty}L_{\tau,Z_{Q_n}}(x)\ =\ L_{\tau,Z}(x).
\end{equation}
\end{lemma}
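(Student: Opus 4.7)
The plan is to combine the multivariate Helly--Bray Theorem, in order to lift marginal weak convergence to weak convergence of the corresponding product measures, with Alexandrov's Portmanteau Theorem applied to the simplex $A=[0,1]^m_<$. The technical heart of the argument will be to show that, for $m$-almost every $x\in[0,1]^m$, the set $A$ is a continuity set of the limiting product measure, and here Lemma~\ref{lema.critico} will be indispensable.

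First, I would single out the set $G\subseteq[0,1]^m$ of those $x=(x_1,\ldots,x_m)$ satisfying (i) for every $i\in[m]$, the cdf $Z_{Q_n}(x_{\tau^{-1}(i)},\cdot)$ converges weakly to $Z(x_{\tau^{-1}(i)},\cdot)$, and (ii) for every $i\neq j$ in $[m]$, the cdfs $Z(x_i,\cdot)$ and $Z(x_j,\cdot)$ share no common discontinuity. By hypothesis, (i) fails only on a subset of $[0,1]^m$ of measure zero; by Lemma~\ref{lema.critico} combined with Fubini's Theorem, so does (ii). Hence $G$ has full Lebesgue measure in $[0,1]^m$.

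Now fix $x\in G$ and denote by $\mu_n^{(i)}$, $\mu^{(i)}$ the Lebesgue--Stieltjes probability measures on $[0,1]$ induced by $Z_{Q_n}(x_{\tau^{-1}(i)},\cdot)$ and $Z(x_{\tau^{-1}(i)},\cdot)$, respectively. Condition (i) gives $\mu_n^{(i)}\overset{w}{\longrightarrow}\mu^{(i)}$ for each $i$, so the multivariate Helly--Bray Theorem yields weak convergence of the product measures $\mu_n:=\mu_n^{(1)}\times\cdots\times\mu_n^{(m)}\overset{w}{\longrightarrow}\mu:=\mu^{(1)}\times\cdots\times\mu^{(m)}$ on $[0,1]^m$. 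The set $A$ is open in $[0,1]^m$ with closure $\{0\leq y_1\leq\cdots\leq y_m\leq 1\}$, so its topological boundary is contained in $\bigcup_{i=1}^{m-1}\{y_i=y_{i+1}\}$. Fubini gives
\[
  \mu\bigl(\{y_i=y_{i+1}\}\bigr)\ =\ \sum_{a\in[0,1]}\mu^{(i)}(\{a\})\,\mu^{(i+1)}(\{a\}),
\]
where the sum runs over the countable set of atoms of $\mu^{(i)}$; each term vanishes by condition (ii). Hence $\mu(\partial A)=0$, and Alexandrov's Portmanteau Theorem delivers $\mu_n(A)\to\mu(A)$, which is exactly~\eqref{eq.[0,1]good}.

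The main obstacle is precisely this boundary computation: without Lemma~\ref{lema.critico} (which in turn rests on the mass condition in part (b) of Definition~\ref{def.perm.lim}), the marginals $\mu^{(i)}$ and $\mu^{(i+1)}$ could share atoms on a positive-measure set of $x\in[0,1]^m$, and then $A$ would fail to be a $\mu$-continuity set, invalidating the Portmanteau step.
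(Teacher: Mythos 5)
Your argument is correct and matches the paper's proof in structure and essentials: you restrict to a full-measure set where (a) each marginal cdf converges weakly and (b) no two of the cdfs $Z(x_i,\cdot)$ share a point of discontinuity, invoke the multivariate Helly--Bray Theorem for weak convergence of the product measures, and then verify $\mu(\partial[0,1]^m_<)=0$ via Fubini and Lemma~\ref{lema.critico} before applying Alexandrov's Portmanteau Theorem. The only cosmetic difference is that you compute $\mu(\{y_i=y_{i+1}\})$ directly as a sum over shared atoms, whereas the paper bounds $\mu(\Omega_i)$ by the $\mu'$-measure of the discontinuity set of one factor and then uses countable additivity; both are the same Fubini computation.
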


\begin{theorem}[Alexandrov's Portmanteau Theorem]\label{alexandrov}
Let $\Omega$ be a separable metric space and let $\mu, \mu_1, \mu_2, \ldots$ be probability measures over the Borel sets of $\Omega$. The following assertions are equivalent.
\begin{itemize}
\item[(a)] For every bounded continuous function $g:\Omega \to \mathbb{R}$, $\lim_{n \to \infty} \int_{\Omega} g~d\mu_n \, =  \int_{\Omega} g~d\mu.$  

\item[(b)] $\lim_{n \to \infty} \mu_n(A)=\mu(A)$ for every $A \subseteq \Omega$ whose boundary $\partial A$ satisfies $\mu(\partial A)=0$. The \emph{boundary} $\partial A$ of $A$ is the set of points $x$ of $A$ for which every open ball centered at $x$ contains a point in $\Omega\backslash A$.

\item[(c)] $\limsup_{n \to \infty} \mu_n(C) \leq \mu(C)$ for every closed set $C \subseteq \Omega$.

\item[(d)] $\liminf_{n \to \infty} \mu_n(U) \geq \mu(U)$ for every open set $U \subseteq \Omega$.
\end{itemize}
\end{theorem}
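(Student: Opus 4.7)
The plan is to identify both sides of \eqref{eq.[0,1]good} with the measure of the simplex $[0,1]^m_<$ under a product of Lebesgue--Stieltjes measures, upgrade weak convergence of the marginals to weak convergence of the $m$-fold product, verify that the limit product measure assigns zero to the boundary $\partial [0,1]^m_<$, and then invoke Alexandrov's Portmanteau Theorem (Theorem~\ref{alexandrov}, part (b)).

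Fix $x=(x_1,\ldots,x_m)\in[0,1]^m$. For $i\in[m]$ let $\mu_n^{(i)}$ and $\mu^{(i)}$ denote the probability measures on $[0,1]$ induced by the cdfs $Z_{Q_n}(x_{\tau^{-1}(i)},\cdot)$ and $Z(x_{\tau^{-1}(i)},\cdot)$, and set $\mu_n=\mu_n^{(1)}\times\cdots\times\mu_n^{(m)}$ and $\mu=\mu^{(1)}\times\cdots\times\mu^{(m)}$, so that $L_{\tau,Z_{Q_n}}(x)=\mu_n([0,1]^m_<)$ and $L_{\tau,Z}(x)=\mu([0,1]^m_<)$. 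By hypothesis $Z_{Q_n}(y,\cdot)\overset{w}{\longrightarrow}Z(y,\cdot)$ for $y$ outside a null set $N\subset[0,1]$, so by Fubini the set of $x\in[0,1]^m$ with $x_{\tau^{-1}(i)}\in N$ for some $i$ is null. For every other $x$, the univariate Helly--Bray theorem gives $\mu_n^{(i)}\to\mu^{(i)}$ weakly for every $i$, and a standard induction (test against bounded continuous functions on $[0,1]^m$, integrating one coordinate at a time via Fubini and applying univariate Helly--Bray with bounded convergence at each step) yields $\mu_n\to\mu$ weakly on $[0,1]^m$; alternatively, one appeals directly to the multivariate Helly--Bray theorem mentioned in the paper.

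To apply Portmanteau with $A=[0,1]^m_<$ I need $\mu(\partial A)=0$ for almost every $x$. The topological boundary of $[0,1]^m_<$ in $[0,1]^m$ is contained in
\[
\{y_1=0\}\cup\{y_m=1\}\cup\bigcup_{i=1}^{m-1}\{y_i=y_{i+1}\}.
\]
The first two sets are $\mu$-null for every $x$: Definition~\ref{def.perm.lim}(a) ensures that $Z(x_{\tau^{-1}(1)},\cdot)$ is continuous at $0$ so $\mu^{(1)}(\{0\})=0$, and that $Z(x_{\tau^{-1}(m)},1)=1$ so $\mu^{(m)}(\{1\})=0$. For each diagonal, Fubini gives
\[
\mu(\{y_i=y_{i+1}\})\ =\ \int_{[0,1]}\mu^{(i+1)}(\{y\})\,d\mu^{(i)}(y),
\]
which vanishes as soon as $Z(x_{\tau^{-1}(i)},\cdot)$ and $Z(x_{\tau^{-1}(i+1)},\cdot)$ share no discontinuity. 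By Lemma~\ref{lema.critico} the set $D_Z\subset[0,1]^2$ of pairs whose cdfs have a common discontinuity is Lebesgue null, so a second Fubini argument in $[0,1]^m$ (over the $m-1$ coordinate pairs $(x_{\tau^{-1}(i)},x_{\tau^{-1}(i+1)})$) shows that the set of bad $x$ is negligible. For $x$ outside the union of these null sets, Portmanteau delivers $\mu_n([0,1]^m_<)\to\mu([0,1]^m_<)$, which is precisely \eqref{eq.[0,1]good}.

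The main obstacle is the passage from weak convergence of marginals to weak convergence of the product, which is classical but must be set up carefully since the $x_{\tau^{-1}(i)}$ are simultaneously parameters of the cdfs and variables of a Fubini argument; the cleanest route is an induction on $m$ with bounded continuous test functions, leveraging bounded convergence to move the limit inside the outer integral. A secondary bookkeeping hurdle is to confirm that the exceptional null sets from the marginal convergence, from the edge conditions at $0$ and $1$, and from Lemma~\ref{lema.critico} together pull back to a null subset of $[0,1]^m$; once this is done, the conclusion follows immediately.
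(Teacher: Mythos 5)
There is a fundamental mismatch here: the statement you were asked to prove is Theorem~\ref{alexandrov} itself, i.e.\ the equivalence of the four conditions (a)--(d) for probability measures on a separable metric space, but your write-up is a proof of a different result, namely Lemma~\ref{app.portmanteau} (the almost-everywhere convergence $L_{\tau,Z_{Q_n}}(x)\to L_{\tau,Z}(x)$ in \eqref{eq.[0,1]good}). Worse, your argument explicitly \emph{invokes} ``Alexandrov's Portmanteau Theorem (Theorem~\ref{alexandrov}, part (b))'' as a black box, so as a proof of Theorem~\ref{alexandrov} it is circular: nothing in the proposal establishes any of the implications among (a), (b), (c) and (d). A genuine proof would have to argue, for instance, (a)$\Rightarrow$(c) by approximating the indicator of a closed set $C$ from above by the bounded continuous functions $g_k(x)=\max\{0,1-k\,d(x,C)\}$, obtain (c)$\Leftrightarrow$(d) by passing to complements, deduce (b) from (c) and (d) by squeezing $\mu_n(A)$ between $\mu_n$ of the interior and of the closure of $A$ and using $\mu(\partial A)=0$, and recover (a) from (b) by a layer-cake or simple-function approximation of a bounded continuous $g$ using level sets $\{g\ge t\}$ whose boundaries are $\mu$-null for all but countably many $t$. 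None of these steps appears in your text.

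For what it is worth, the paper does not prove Theorem~\ref{alexandrov} either: it records it as a classical fact with a reference to Lo\`eve, and then uses it (together with the multivariate Helly--Bray theorem and Lemma~\ref{lema.critico}) to prove Lemma~\ref{app.portmanteau}. Your proposal is in fact a reasonable reconstruction of that \emph{application} --- identifying $L_{\tau,Z_{Q_n}}(x)$ and $L_{\tau,Z}(x)$ with product measures of the simplex, checking $\mu(\partial[0,1]^m_<)=0$ via the no-common-discontinuity set $D_Z$, and handling the null sets by Fubini --- but that is the content of Lemma~\ref{app.portmanteau}, not of the theorem you were asked to prove. As it stands, the proposal must be judged as not proving the stated result.
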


\begin{theorem}[Multivariate Helly--Bray Theorem]\label{helly}
Let $k>1$ be an integer and let $g:[0,1]^k \to \mathbb{R}$ be a bounded continuous function. For every $i \in [k]$, let $F_i$ be a cdf and $(F_{i,n})_{n \in \mathbb{N}}$ be a sequence of cdf such that $F_{i,n}\overset{w}{\longrightarrow} F_i$. Then
$$\lim_{n \to \infty} \int_{[0,1]^k} g~dF_{1,n}\cdots dF_{k,n} =  \int_{[0,1]^k} g~dF_{1}\cdots dF_{k}.$$ 
\end{theorem}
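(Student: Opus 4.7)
My plan is to reduce the $k$-dimensional statement to the classical one-dimensional Helly--Bray theorem (i.e.\ the $k=1$ case, which is exactly the implication (d)$\Rightarrow$(a) of Alexandrov's Portmanteau Theorem applied to the measures $\mu_{F_{1,n}}$ and $\mu_{F_1}$) by a standard density argument in the Banach space $C([0,1]^k)$ under the sup-norm. Throughout, I write $\nu_n := \mu_{F_{1,n}}\otimes\cdots\otimes\mu_{F_{k,n}}$ and $\nu := \mu_{F_1}\otimes\cdots\otimes\mu_{F_k}$ for the product probability measures on the compact box $[0,1]^k$, so the conclusion to be established is that $\int g\,d\nu_n\to\int g\,d\nu$ for every $g\in C([0,1]^k)$.

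Let $\mathcal{A}\subseteq C([0,1]^k)$ denote the set of $g$ satisfying this convergence. First I would observe that $\mathcal{A}$ is a vector space containing the constants and is \emph{closed under uniform convergence}: if $g_\varepsilon\in\mathcal{A}$ with $\|g-g_\varepsilon\|_\infty<\varepsilon$, then since $\nu_n$ and $\nu$ each have total mass $1$,
$$\Big|\int g\,d\nu_n-\int g\,d\nu\Big|\ \leq\ 2\varepsilon\ +\ \Big|\int g_\varepsilon\,d\nu_n-\int g_\varepsilon\,d\nu\Big|,$$
so the $\limsup$ of the left-hand side is at most $2\varepsilon$; since $\varepsilon>0$ was arbitrary, $g\in\mathcal{A}$.

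Next I would verify that $\mathcal{A}$ contains every tensor product $g(x_1,\ldots,x_k)=g_1(x_1)\cdots g_k(x_k)$ with $g_i\in C([0,1])$. By Fubini's theorem together with the product structure of $\nu_n$,
$$\int g\,d\nu_n\ =\ \prod_{i=1}^{k}\int_{[0,1]}g_i\,dF_{i,n},$$
and each of the $k$ factors is bounded by $\|g_i\|_\infty$ and, by the one-dimensional Helly--Bray theorem, converges to $\int g_i\,dF_i$. Hence the product converges to $\int g\,d\nu$, giving $g\in\mathcal{A}$. By linearity, the whole algebra of finite linear combinations of such tensor products is contained in $\mathcal{A}$.

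Finally I would invoke the Stone--Weierstrass theorem: this algebra contains the constants and separates points of $[0,1]^k$ (already the coordinate projections do), so it is uniformly dense in $C([0,1]^k)$. Combined with the closure property above, this forces $\mathcal{A}=C([0,1]^k)$, which is exactly the desired conclusion. I do not anticipate a genuine obstacle; the subtlest step is the closure-under-uniform-limits argument, which rests crucially on the uniform bound $\nu_n([0,1]^k)=\nu([0,1]^k)=1$ and would have to be replaced by a tightness argument were we working with unbounded or infinite measures rather than probabilities.
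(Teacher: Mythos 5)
The paper treats the Multivariate Helly--Bray Theorem as classical background, citing Lo\`{e}ve (Chapter~IV) without supplying a proof, so there is no in-paper argument to compare against. Your proof is correct and is the standard reduction: by Fubini and the one-dimensional Helly--Bray theorem, tensor products $g_1\otimes\cdots\otimes g_k$ satisfy the conclusion; the collection $\mathcal{A}$ of $g\in C([0,1]^k)$ satisfying the conclusion is a linear subspace closed under uniform limits (here you correctly use that all measures involved have total mass one); and by Stone--Weierstrass the algebra generated by the tensor products, which contains the constants and separates points, is uniformly dense, forcing $\mathcal{A}=C([0,1]^k)$. The one small imprecision is the parenthetical claim that the one-dimensional Helly--Bray theorem is ``exactly'' the implication (d)$\Rightarrow$(a) of Portmanteau: Portmanteau relates four equivalent conditions on \emph{measures}, and passing from pointwise convergence of the cdfs $F_{i,n}$ at continuity points of $F_i$ to any one of those conditions requires a further, though elementary, step. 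This does not affect the soundness of your argument, since the one-dimensional Helly--Bray theorem is in any case a standard fact you are entitled to cite.
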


\begin{proof}[Proof of Lemma~\ref{app.portmanteau}] Let $[0,1]^m_w\subset[0,1]^m$ be the subset consisting of the points
$x=(x_1,\ldots,x_m)$ such that, as $n$ tends to infinity,
$Z_{Q_n}(x_i,\cdot)\overset{w}{\longrightarrow}Z(x_i,\cdot)$ for every $i\in[m]$. Our hypothesis implies that the measure of  
$[0,1]^m\backslash[0,1]^m_w$ is zero.

Let $[0,1]^m_d$ be the set of points $(x_1,\ldots,x_m)$ of $[0,1]^m$ such that, for some pair $1\leq i<j\leq m$,
$Z(x_i,\cdot)$ and $Z(x_j,\cdot)$ have a common point of discontinuity.
By Lemma~\ref{lema.critico}, we know that $[0,1]^m_d$ has measure zero.
Let $[0,1]^m_{good}$ be given by $[0,1]^m_{good}=[0,1]^m_w\backslash[0,1]^m_d$, whose complement with respect to $[0,1]$ clearly has measure zero.

We prove that every $x\in[0,1]^m_{good}$ satisfies equation~(\ref{eq.[0,1]good}). To this end, fix $x=(x_1,\ldots,x_m)\in[0,1]^m_{good}$ and let $\mu$ be the measure in $[0,1]^m$ given by the product of the Lebesgue-Stieltjes measures associated with the cdf $Z(x_{\tau^{-1}(i)},\cdot)$, for each $i\in[m]$. For all $n>1$, let $\mu_n$ be the measure in $[0,1]^m$ given by the product of the Lebesgue-Stieltjes measures associated with the cdf $Z_{Q_n}(x_{\tau^{-1}(i)},\cdot)$, for each $i\in[m]$.
Since $x\in[0,1]^m_w$, the Multivariate Helly--Bray Theorem ensures that $\mu_n$ converges weakly to $\mu$ as $n$ tends to infinity.

To prove that $\lim_{n\to\infty}\mu_n([0,1]^m_<)=\mu([0,1]^m_<)$, which is precisely equation~(\ref{eq.[0,1]good}),
it suffices to apply Alexandrov's Portmanteau Theorem. However, we still need to verify that $\mu(\partial[0,1]^m_<)=0$. Note that the boundary $\partial[0,1]^m_<$ has Lebesgue measure zero in $[0,1]^m$, as it consists of the points $(\alpha_1,\ldots,\alpha_m)\in[0,1]^m$ such that $\alpha_1 \leq \cdots \leq \alpha_m$ and $\alpha_i=\alpha_{i+1}$ for some $i\in[m-1]$. For $i\in[m-1]$, let $\Omega_i\subset[0,1]^m$ be the hyperplane of the points $(\alpha_1,\alpha_2,\ldots,\alpha_m)$ such that $\alpha_{i+1}=\alpha_i$.
Clearly, $\partial[0,1]^m_<\subset\bigcup_{i=1}^{m-1}\Omega_i$, so that $\mu(\partial[0,1]^m_<)\leq\sum_{i=1}^{m-1}\mu(\Omega_i)$.

By Fubini's Theorem, we have
$$\mu(\Omega_1)\ =\ \int_{\alpha_1\in[0,1]}\int_{\alpha_2\in\{\alpha_1\}}\int_{\alpha_3\in[0,1]}\int_{\alpha_4\in[0,1]}
  \cdots\int_{\alpha_m\in[0,1]}dZ(x_{\tau^{-1}(1)},\cdot)\cdots dZ(x_{\tau^{-1}(m)},\cdot).$$
Now, given that, for every $k\in[m]$, $\displaystyle{\int_{\alpha_k\in[0,1]}dZ(x_{\tau^{-1}(k)},\cdot)=1}$, we have
$$\mu(\Omega_1)=\int_{\alpha_1\in[0,1]}\Big(\int_{\alpha_2\in\{\alpha_1\}}
  dZ(x_{\tau^{-1}(2)},\cdot)\Big) dZ(x_{\tau^{-1}(1)},\cdot).
$$
Let $D_2$ be the set of discontinuities of the cdf $Z(x_{\tau^{-1}(2)},\cdot)$. The internal integral is positive when $\alpha_1\in D_2$ and is equal to zero when this is not the case. As a consequence,
$$
\mu(\Omega_1)\leq\int_{\alpha_1\in D_2} dZ(x_{\tau^{-1}(1)},\cdot)=\mu'_{\tau^{-1}(1)}(D_2),
$$
where $\mu'_{\tau^{-1}(1)}$ is the Lebesgue-Stieltjes measure associated with the cdf $Z(x_{\tau^{-1}(1)},\cdot)$.
Now, it is an easy fact that the set $D_2$ is countable (see, for instance,~\cite{loeve77}). As a consequence, the countable-additivity of the measure $\mu'_{\tau^{-1}(1)}$ leads to $\mu'_{\tau^{-1}(1)}(D_2)=\sum_{\alpha\in D_2}\mu'_{\tau^{-1}(1)}(\alpha)$.

Since $x\not\in[0,1]^m_d$, we know that the cdfs $Z(x_{\tau^{-1}(1)},\cdot)$ and $Z(x_{\tau^{-1}(2)},\cdot)$ cannot have a common discontinuity, so that $\mu_{\tau^{-1}(1)}(\alpha)=0$ for every $\alpha\in D_2$.
Hence, $\mu'_{\tau^{-1}(1)}(D_2)=0$ and $\mu(\Omega_1)=0$. Analogously, we have $\mu(\Omega_i)=0$ for every $i\in[m-1]$, so that $\mu(\partial[0,1]^m_<)=0$ and our result follows from the Portmanteau Theorem. \end{proof}

Before proceeding with the proof of Lemma~\ref{lem.QtoZ}, we first use the above result to establish that, for a limit permutation $Z$ and a permutation $\tau$, the core function $L_{\tau,Z}$ is Lebesgue measurable, and hence the density of $\tau$ as a subpermutation of the limit permutation $Z$ is well defined, see Definition~\ref{def.dens.subperm.lim}.

\begin{proposition}\label{propQtoZ}
Let $Z$ be a limit permutation and let $\tau: [m] \to [m]$ be a permutation, where $m>1$. Then the core function $L_{\tau,Z}$ is Lebesgue measurable.
\end{proposition}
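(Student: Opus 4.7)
The plan is to exhibit $L_{\tau, Z}$ as an almost-everywhere pointwise limit of Borel-measurable simple functions on $[0,1]^m$ built from dyadic approximations of $Z$, and then invoke closure of measurability under such limits.

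For each $k \geq 1$, set $n_k = 2^k$ and define $Q_{n_k}: [n_k]^2 \to [0,1]$ by dyadic averaging,
$$Q_{n_k}(i, j) = n_k^2 \int_{(i-1)/n_k}^{i/n_k} \int_{(j-1)/n_k}^{j/n_k} Z(x,y) \, dx\, dy.$$
Monotonicity of $Q_{n_k}$ in the second index is immediate from $Z(x, \cdot)$ being non-decreasing, while Fubini together with Definition~\ref{def.perm.lim}(b) yields
$$\sum_{i=1}^{n_k} Q_{n_k}(i, j) = n_k^2 \int_{(j-1)/n_k}^{j/n_k} y\, dy = \frac{2j-1}{2} \in [j-1, j];$$
so each $Q_{n_k}$ is a weighted permutation.

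Next I would verify the hypotheses of Lemma~\ref{app.portmanteau}. Because $Q_{n_k}(i,j)$ is exactly the average of $Z$ over a dyadic cell, the sequence $(Z_{Q_{n_k}}(X, Y))_{k \geq 1}$ is the conditional-expectation martingale of $Z(X, Y)$ with respect to the dyadic filtration on $[0,1]^2$, where $(X,Y)$ is uniform; as in the proof of Lemma~\ref{lemaLS.5.2}, this martingale is uniformly bounded, so the Martingale Convergence Theorem gives $Z_{Q_{n_k}}(x, y) \to Z(x, y)$ for a.e.\ $(x, y) \in [0,1]^2$. By Fubini, for a.e.\ $x$ this convergence holds for a.e.\ $y$; since $Z(x, \cdot)$ is a cdf and so has countably many discontinuities, a standard sandwich argument using the monotonicity of the $Z_{Q_{n_k}}(x, \cdot)$ upgrades a.e.\ pointwise convergence on a dense set to pointwise convergence at every continuity point of the limit, i.e.\ $Z_{Q_{n_k}}(x, \cdot) \overset{w}{\longrightarrow} Z(x, \cdot)$ for a.e.\ $x$.

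Lemma~\ref{app.portmanteau} then yields $L_{\tau, Z_{Q_{n_k}}}(x) \to L_{\tau, Z}(x)$ for a.e.\ $x \in [0,1]^m$. Each $L_{\tau, Z_{Q_{n_k}}}$ is a Borel simple function on $[0,1]^m$: for each coordinate $x_i$, the Lebesgue--Stieltjes measure associated with $Z_{Q_{n_k}}(x_i, \cdot)$ is purely atomic on $\{j/n_k : j \in [n_k+1]\}$, with atom masses depending on $x_i$ only through the dyadic cell containing it, so $L_{\tau, Z_{Q_{n_k}}}(x_1, \ldots, x_m)$ is constant on each product of dyadic cells and takes only finitely many values. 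Since an almost-everywhere limit of measurable functions is measurable, $L_{\tau, Z}$ is Lebesgue measurable. The main obstacle is the fiberwise weak-convergence step, which transfers joint a.e.\ convergence on $[0,1]^2$ into weak convergence of the sections $Z_{Q_{n_k}}(x, \cdot)$ via the monotone sandwich argument above; the remaining ingredients (dyadic martingale convergence, Lemma~\ref{app.portmanteau}, and closure of measurability under pointwise a.e.\ limits) are standard.
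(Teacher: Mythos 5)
Your proposal is correct and follows essentially the same route as the paper: dyadic averaging of $Z$ to produce weighted permutations $Q_{n_k}$, martingale convergence of $Z_{Q_{n_k}}$ to $Z$ almost everywhere, fiberwise weak convergence of the sections, Lemma~\ref{app.portmanteau} to get $L_{\tau,Z_{Q_{n_k}}}\to L_{\tau,Z}$ a.e., and closure of measurability under a.e.\ limits of the (step-function) approximants. The only cosmetic difference is that you verify the weighted-permutation property and the fiberwise weak convergence directly, where the paper delegates these to Lemmas~\ref{lemaLS.5.2} and~\ref{rud.Zweak}.
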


\begin{proof}
We use a strategy that resembles the work of Borgs \etal~\cite{borgs06b}. Let $P$ be a partition of $[0,1]$ in $k$ intervals $V_1=[a_1,a_2]$ and $V_i=(a_{i},a_{i+1}]$, $i=2,\ldots,k$, where $a_1=0$ and $a_{k+1}=1$. For every $i \in [k]$, let $|V_i|=a_{i+1}-a_i$.

We consider the matrix $Q_{Z,P}:[k]^2 \to [0,1]$, where, for every $i,j \in [k]$,
$$Q_{Z,P}(i,j)=\frac{1}{|V_i| |V_j|}\int_{V_i \times V_j} Z(x,y) dx \, dy.$$

For every $n \geq 1$, let $P_n$ be a partition of $[0,1]$ in intervals of length $1/2^n$, and let $Q_n=Q_{Z,P_n}$. Clearly, for $n'>n>0$, the partition $P_{n'}$ refines $P_n$. As in the proof of Lemma~\ref{lemaLS.5.2}, we may show through a martingale that, as $n$ tends to infinity, $Z_{Q_n}$ converges almost everywhere and that the limit is $Z$. Lemma~\ref{lemaLS.5.2} further implies that, for almost all $x \in [0,1]$, $Z_{Q_n}(x,\cdot) \overset{w}{\longrightarrow} Z(x,\cdot)$.

On the one hand, by Lemma~\ref{app.portmanteau}, the core function $L_{\tau,Z}$ is the limit of the sequence of functions $(L_{\tau,Z_{Q_n}})_{n \in \mathbb{N}}$ almost everywhere. On the other hand, it is easy to see that all the functions $L_{\tau,Z_{Q_n}}$ are measurable, as $Z_{Q_{n}}$ is a step function for every $n$.

As $L_{\tau,Z}$ is the limit of a sequence of Lebesgue measurable functions on $[0,1]$, it is itself measurable.
\end{proof}

\begin{proof}[Proof of Lemma~\ref{lem.QtoZ}] Let $\tau$ be a permutation on $[m]$. Let $L_n$ denote the core function $L_{\tau,Z_{Q_n}}$. As $Z_{Q_n}$ is a step function, it is easy to see that, for every $x=(x_1,\ldots,x_m)\in [0,1]^m$,
\begin{equation}\label{tapsi1}
  L_n(x)\ =\ \sum_{A\in[k_n+1]^m_<}\prod_{i=1}^m
  \Big(Q_n(\lceil k_nx_i\rceil,a_{\tau(i)})-Q_n(\lceil k_nx_i\rceil,a_{\tau(i)}-1)\Big).
\end{equation}
Definition~\ref{def.dens.perm2} states that
\[
  t(\tau,Q_n)\ =\ \binom{k_n}{m}^{-1}\sum_{X\in[k_n]^m_<}\sum_{A\in[k_n+1]^m_<}\prod_{i=1}^m
  \Big(Q_n(X_i,a_{\tau(i)})-Q_n(X_i,a_{\tau(i)}-1)\Big),
\]
which, using~(\ref{tapsi1}), may be rewritten as
\[
  t(\tau,Q_n)\ =\ \binom{k_n}{m}^{-1}\sum_{X\in[k_n]^m_<}L_n\Big(\frac{X_1}{k_n},\ldots,\frac{X_m}{k_n}\Big).
\]
Moreover, note that $\displaystyle{L_n\big(x_1,\ldots,x_m\big)=L_n\big(X_1/k_n,\ldots, X_m/k_n\big)}$
whenever $x_i$ lies in the interval $\big((X_i-1)/k_n,X_i/k_n\big]$ for every $i\in[m]$.
As a consequence, the summation $\frac{1}{k_n^m}\sum_{X\in[k_n]^m_<}$ may be rewritten as a multiple integral, where $x_1$ ranges from 0 to 1, $x_2$ ranges from $\frac{1}{k_n}\lceil k_nx_1\rceil$ to 1, and, in general,
$x_j$ ranges from $\frac{1}{k_n}\lceil k_nx_{j-1}\rceil$ to 1. Thus
\begin{equation}\label{tapsi2}
  t(\tau,Q_n)=\binom{k_n}{m}^{-1} k_n^m
  \int_{(0,1]}\int_{\Big(\frac{1}{k_n}\lceil k_nx_1\rceil,1\Big]}
  \cdots\int_{\Big(\frac{1}{k_n}\lceil k_nx_{m-1}\rceil,1\Big]}
  L_n(x_1,\ldots,x_m)\ dx_1\cdots dx_m.
\end{equation}
Since $L_n$ is a nonnegative function, this immediately leads to the upper bound
\[
  t(\tau,Q_n)\ \leq\ \binom{k_n}{m}^{-1}k_n^m
  \int_{[0,1]^m_<} L_n(x)\ dx.
\]
We claim that the following lower bound holds.
\[
  t(\tau,Q_n) \geq\ \binom{k_n}{m}^{-1}k_n^m \Big(
  \int_{[0,1]^m_<} L_n(x)\ dx\  -\ \frac{m-1}{k_n}\Big).
\]
Indeed, all the points $(x_1,\ldots,x_m)$ that lie in $[0,1]^m_<$, but are not in the region of integration of equation~(\ref{tapsi2}), are such that, for some $i \in [m-1]$, $x_{i+1} \in (x_i,\frac{1}{k_n}\lceil k_nx_i\rceil)$. As a consequence,
\[
  t(\tau,Q_n)\geq\binom{k_n}{m}^{-1}k_n^m\Big(\int_{[0,1]^m_<}L_n(x)dx -
  \sum_{i=1}^{m-1}\int_0^1\cdots\int_{x_{i-1}}^1\int_{x_i}^{\frac{1}{k_n}\lceil k_nx_i\rceil}
  \int_{x_{i+1}}^1\cdots\int_{x_{m-1}}^1 L_n(x)dx\Big)
\]
\[
  \geq\binom{k_n}{m}^{-1}k_n^m\Big(\int_{[0,1]^m_<}L_n(x_1,\ldots,x_m)
  \ dx_1\cdots dx_m\ -\  \sum_{i=1}^{m-1}\frac{1}{k_n}\Big),
\]
as claimed. Because $k_n$ goes to infinity with $n$, our upper and lower bounds lead to
\[
  \lim_{n\to\infty}t(\tau,Q_n)\ =\
  \lim_{n\to\infty}\binom{k_n}{m}^{-1}k_n^m\int_{[0,1]^m_<} L_n(x)\ dx.
\]
Now, we use the definition of $L_n$ together with the fact that $\binom{k_n}{m}^{-1}k_n^m\to m!$ to obtain
\[
  \lim_{n\to\infty}t(\tau,Q_n)\ =\ m!\lim_{n\to\infty}\int_{[0,1]^m_<}\Big(\int_{[0,1]^m_<}
    dZ_{Q_n}(x_{\tau^{-1}(1)},\cdot)\cdots dZ_{Q_n}(x_{\tau^{-1}(m)},\cdot)\Big) dx_1\cdots dx_m.
\]
By Lemma~\ref{app.portmanteau}, there is a set $[0,1]^m_{good,<}\subset[0,1]^m_<$
such that the set $[0,1]^m_<\backslash[0,1]^m_{good,<}$ has measure zero, and,
for every $x=(x_1,\ldots,x_m)\in[0,1]^m_{good,<}$, equation~(\ref{eq.[0,1]good}) holds. Note that Lemma~\ref{app.portmanteau} may be applied because we assume that the hypotheses of Lemma~\ref{lemaLS.5.2} hold.

To conclude the proof, we combine the fact that $[0,1]^m_<\backslash[0,1]^m_{good,<}$ has measure zero with an application of the Dominated Convergence Theorem over the set $[0,1]^m_{good,<}$. This yields
\begin{equation*}
\begin{split}
  \lim_{n\to\infty}t(\tau,Q_n)\ &=\ m!\int_{[0,1]^m_{good,<}}\Big(\lim_{n\to\infty}\int_{[0,1]^m_<}
    dZ_{Q_n}(x_{\tau^{-1}(1)},\cdot)\cdots dZ_{Q_n}(x_{\tau^{-1}(m)},\cdot)\Big) dx_1\cdots dx_m\\
 &=\ m!\int_{[0,1]^m_<}\Big(\int_{[0,1]^m_<}  
    dZ(x_{\tau^{-1}(1)},\cdot)\cdots dZ(x_{\tau^{-1}(m)},\cdot) \big) dx_1\cdots dx_m\ =\ t(\tau,Z),
\end{split}
\end{equation*}
as required. \end{proof}


\subsection{The proof of Lemma~\ref{lemaLS.5.1}}\label{provalemaLS.5.1}

This section is devoted to the proof of Lemma~\ref{lemaLS.5.1}, which draws its inspiration from the proof of Lemma 5.1 in~\cite{lovasz06}, with graphs and the related notions of rectangular distance and weak regularity being replaced by their permutation counterparts. We remind the reader of the statement of this result.

\addtocounter{section}{-3}
\addtocounter{theorem}{-4}

\begin{lemma}
Every permutation sequence $(\sigma_n)_{n \in \mathbb{N}}$ with $\lim_{n \to \infty}|\sigma_n|=\infty$ has a subsequence $(\sigma'_m)_{m \in \mathbb{N}}$, $|\sigma_m'| \geq m$, for which there exist a sequence of positive integers $(k_m)_{m \in \mathbb{N}}$ and a sequence of weighted permutations $(Q_m)_{m \in \mathbb{N}}$, $Q_m:[k_m]^2\to[0,1]$, satisfying the following properties.
\begin{itemize}
\item[(i)] If $j<m$, then $k_j$ divides $k_m$ and $Q_j=\widehat{Q}_{m,j}$, where
$\widehat{Q}_{m,j}$ is the matrix obtained from $Q_m$ by merging its entries in $k_j$ consecutive blocks of size $k_m/k_j$, and by replacing each block by a single value, the arithmetic mean of the entries in that block.
\item[(ii)] For every $j<m$, $\sigma'_m$ has a weak $(1/j)$-regular $k_j$-partition $P_{m,j}$ whose partition matrix $Q_{m,j}$ has dimension $k_j\times k_j$ and satisfies
$$d_{\square}(Q_{m,j},Q_j)<1/j.$$
Moreover, for $1\leq i<j\leq m$, $P_{m,j}$ refines $P_{m,i}$.  
\end{itemize}
\end{lemma}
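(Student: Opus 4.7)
The strategy is adapted directly from Lemma~5.1 of Lov\'{a}sz--Szegedy, with graphs and the graph weak regularity lemma replaced by permutations and by our Lemma~\ref{prop_weak}. The three ingredients are: a divisibility chain $k_1\mid k_2\mid\cdots$ chosen so that equitable $k_j$-partitions can be taken nested; Lemma~\ref{prop_weak}, which guarantees that every equitable $k_j$-partition is weak $(1/j)$-regular once $k_j$ is large enough; and a diagonal compactness extraction that forces simultaneous convergence of the partition matrices at every scale $j$.

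First I would pick positive integers $k_1<k_2<\cdots$ with $k_{j-1}\mid k_j$, $k_j\to\infty$, and $k_j\ge k_0(1/j)$, where $k_0$ is the constant produced by Lemma~\ref{prop_weak}. For each positive integer $n$ and each $j$, let $P^{(j)}(n)$ be the canonical equitable $k_j$-partition of $[n]$ with breakpoints $\lfloor in/k_j\rfloor$ for $i=0,1,\dots,k_j$. Because $k_i\mid k_j$, the breakpoints of $P^{(i)}(n)$ form a subset of those of $P^{(j)}(n)$, so $P^{(j)}(n)$ refines $P^{(i)}(n)$ for $i<j$. For a permutation $\pi\in S_n$ write $Q_\pi^{(j)}:=Q_{\pi,P^{(j)}(n)}$; by Lemma~\ref{prop_weighted} this is a weighted permutation whenever $n>4k_j^2$, and it lives in the compact cube $[0,1]^{k_j\times k_j}$. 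Using $|\sigma_n|\to\infty$, I pass to a subsequence of $(\sigma_n)$ whose $m$-th term has length exceeding $\max\{m,4k_m^2\}$. A standard diagonal extraction across the countably many compact cubes $[0,1]^{k_j^2}$ then yields a further subsequence $(\sigma'_m)$ and, for each $j$, a matrix $Q_j\in[0,1]^{k_j\times k_j}$ with $Q_{\sigma'_m}^{(j)}\to Q_j$ as $m\to\infty$; a final thinning ensures $d_\square(Q_{\sigma'_m}^{(j)},Q_j)<1/j$ whenever $j<m$.

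Setting $P_{m,j}:=P^{(j)}(|\sigma'_m|)$ and $Q_{m,j}:=Q_{\sigma'_m}^{(j)}$, property~(ii) is immediate: $P_{m,j}$ refines $P_{m,i}$ for $i<j$ by construction; it is weak $(1/j)$-regular because $k_j\ge k_0(1/j)$ and $|\sigma'_m|>2k_j$; and the distance bound has been built in. Each limit $Q_j$ is itself a weighted permutation because the two defining inequalities in that notion are closed under pointwise limits. For property~(i), fix $i<j\le m$. A direct calculation using the nesting of $P_{m,j}$ inside $P_{m,i}$ shows that each entry of $Q_{m,i}$ equals the arithmetic mean of the corresponding $(k_j/k_i)^2$ entries of $Q_{m,j}$, up to an error of order $k_j/|\sigma'_m|$ caused by the (at most one) variation in block sizes of the equitable partitions. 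Letting $m\to\infty$, this error vanishes and both sides converge, yielding $Q_i=\widehat{Q}_{j,i}$, which is~(i).

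The only step with any technical content is the block-averaging estimate underlying~(i): one must verify carefully that the slight discrepancy in block sizes introduced by equitable (rather than exactly uniform) partitions produces an error of order $k_j/|\sigma'_m|$ that vanishes as $m\to\infty$, so the clean averaging identity survives in the limit. Everything else is compactness and bookkeeping.
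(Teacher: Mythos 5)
Your proof is correct and follows essentially the same approach as the paper: fix a divisibility chain $(k_j)$ growing fast enough for Lemma~\ref{prop_weak}, take nested canonical equitable partitions, use Lemma~\ref{prop_weighted} plus compactness of $[0,1]^{k_j^2}$ for a diagonal extraction, and pass the block-averaging identity to the limit for property~(i). The only difference is cosmetic -- the paper builds the subsequence and the $k_m$'s by an explicit step-by-step induction while you choose all $k_j$ upfront and diagonalize once -- and you actually make explicit the vanishing $O(k_j/|\sigma'_m|)$ error in the block-averaging step that the paper dismisses as ``easy to see.''
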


\addtocounter{section}{3}
\addtocounter{theorem}{3}

\begin{proof} For every integer $m \geq 1$, we shall define a subsequence $(\sigma_{n}^m)_{n \in \mathbb{N}}$ of $(\sigma_{n})_{n\in \mathbb{N}}$, as well a positive integer $k_m$ and a weighted permutation $Q_m$, for which the following properties hold:
\begin{itemize}
\item[(i)] for $m'>m$, $(\sigma_{n}^{m'})$ is a subsequence of $(\sigma_n^m)$;

\item[(ii)] for every pair $(m,n) \in \mathbb{N}^2$, $|\sigma_n^m| \geq m$;

\item[(iii)] for every pair $(m,n) \in \mathbb{N}^2$, $\sigma_{n}^m$ has a weak $1/m$-regular equitable $k_m$-partition $P_n^m$ that, if $m>1$, refines the partition associated with $\sigma_n^m$ as an element of the sequence $\sigma_n^{m-1}$. Moreover, if $Q_n^m$ denotes the partition matrix of $\sigma_n^m$ induced by $P_n^m$, we have $d_{\square}(Q_{n}^m,Q_m) < 1/m$.

\item[(iv)] if $j<m$, then $k_j$ divides $k_m$ and $Q_j=\widehat{Q}_{m,j}$, where
$\widehat{Q}_{m,j}$ is the matrix obtained from $Q_m$ by merging its entries in $k_j$ consecutive blocks of size $k_m/k_j$, and by replacing each block by a single value, the arithmetic mean of the entries in that block.
\end{itemize}
It is not difficult to see that any sequence $(\sigma_m')_{m \in \mathbb{N}}$ whose $m$-th element lies in $(\sigma_n^m)_{n \in \mathbb{N}}$ satisfies the requirements of the lemma.

We show by induction on $m$ that such a construction may be obtained. Set $(\sigma_{n}^1)_{n \in \mathbb{N}}$ to be the sequence $(\sigma_n)_{n \in \mathbb{N}}$, let $k_1=1$ and $Q_1=[1/2]$. For each permutation $\sigma^1_{n}$, let $P_{n}^1$ be the partition of $[|\sigma^1_{n}|]$ into a single interval. Clearly, for every $n$, the density matrix of $\sigma_{n}^{1}$ with respect to $P_{n}^1$ is given by $Q_{n}^1=[(n-1)/2n]$. All the required conditions are trivially satisfied.

Now, inductively assume that, for some $m>1$, we have $(\sigma_n^j)_{n \in \mathbb{N}}$, $k_j$, $Q_j$ and $(P_n^j)$ for $j \in \{1,\ldots,m-1\}$. Using Lemma~\ref{prop_weak}, fix $k_0^m=k_0(1/m)$ and define
$$k_m=\min \left\{i k_{m-1}: i \in \mathbb{N} \textrm{ and } i k_{m-1}> k_0^m\right\}.$$
Consider the subsequence $(\hat{\sigma}_n^m)_{n \in \mathbb{N}}$ of $(\sigma_{n}^{m-1})_{n \in \mathbb{N}}$ consisting of all $\sigma$ with $|\sigma|>\max\{m,4 k_m^2\}$.

Now, for every element $\hat{\sigma}_n^m$, consider a $k_m$-partition $P_n^m$ whose elements are $(k_m/k_{m-1})$-equitable partitions of each interval in the partition corresponding to this permutation in step $m-1$, which we call $P_{n}^{m-1}$ for simplicity. We claim that each $P_n^m$ is an equitable partition of $[|\hat{\sigma}_n^m|]$. Indeed, for $c=\left\lfloor |\hat{\sigma}_n^m|/k_{m-1}\right\rfloor$, we know that $c\leq|C_i|\leq c+1$ for every interval $C_i$ in $P_n^{m-1}$, $i\in[|\hat{\sigma}_n^m|]$, as $P_n^{m-1}$ is equitable. As a consequence, the intervals in $P_n^m$ have sizes ranging from $\left\lfloor k_{m-1} c / k_m\right\rfloor$ to $\left\lceil k_{m-1} (c+1) / k_m\right\rceil$. Now, suppose for a contradiction that $P_n^m$ is not equitable, so that, for some integer $i$, $\left\lceil  k_{m-1} (c+1) / k_m\right\rceil=i+1$ and $\left\lfloor  k_{m-1}c / k_m\right\rfloor=i-1$. By definition,
\[
  i-1 \leq k_{m-1}c/k_m < i <k_{m-1} (c+1) /k_m  \leq i+1.
\]
This implies that $c<i k_m/k_{m-1}<c+1$, contradicting the fact that $c$, $i$ and $k_m/k_{m-1}$ are integers. Hence, $P_n^m$ is equitable.

Note that, by Lemma~\ref{prop_weak}, our choice of $k_m$ implies that the partitions $P_n^m$ are weakly $1/m$-regular, while, by Lemma~\ref{prop_weighted}, our choice of $n$ tells us that the partition matrices $Q_{n}^m$ of the permutation $\hat{\sigma}_n^m$ induced by $P_n^m$, respectively, are weighted permutations.

Moreover, for every $(i,j) \in [k_m]^2$, the sequence of real numbers $(Q_n^m(i,j))_{n \in \mathbb{N}}$ is bounded. In particular, we may define a subsequence $(\sigma_n^m)_{n \in \mathbb{N}}$ of $(\hat{\sigma}_n^m)_{n \in \mathbb{N}}$ for which $Q_{n}^m(i,j)$ converges to a real number $Q_m(i,j)$ for every $(i,j) \in [k_m]^2$. Note that the matrix $Q_m$ is a weighted permutation, as it is the limit of weighted permutations. It is also easy to see that condition (iv) must be satisfied. Our result now follows if we restrict $(\sigma_n^m)$ to those elements for which $|Q_{n}^m(i,j)-Q_m(i,j)| \leq \frac{1}{m k_m^2}$, as we have $d_{\square}(Q_n^m,Q_m)<1/m$ for every $n$ in this case.
\end{proof}


\noindent \emph{Acknowledgements:} The authors are indebted to Bal\'{a}zs R\'{a}th and to an anonymous referee for valuable comments and suggestions. The authors are also grateful to NUMEC/USP, N\'{u}cleo de Modelagem Estoc\'{a}stica e Complexidade of the University of S\~{a}o Paulo, for its hospitality.

\appendix

\section{Proofs of the results of Section~\ref{section_regularity}}\label{apend_A}

For completeness, this section contains the proofs of the results presented in Section~\ref{section_regularity}. \footnote{The proofs of these results, along with other properties of weighted permutations, can be found in the reference~\cite{rudini08b}.}

\begin{proof}[Proof of Lemma~\ref{prop_weighted}] Property (a) in the definition of weighted permutations is an immediate consequence of the definition of the graph $G_{\sigma}$. To prove property (b), let $j \in [k]$ and consider the expression
\[
  \sum_{i=1}^kQ_P(i,j)=\sum_{i=1}^k\frac{e_{\sigma}(C_i,C_j)}{|C_i||C_j|}
  =\frac{1}{|C_j|}\sum_{i=1}^k\frac{e_{\sigma}(C_i,C_j)}{|C_i|}.
\]
Let $a_j$ be the smallest element in $C_j$. It is clear that
\[
  (j-1)\Big\lfloor\frac{n}{k}\Big\rfloor+1\ \leq a_j\ \leq (j-1)\Big\lceil\frac{n}{k}\Big\rceil+1,
\]
\[
  (j-1)\Big(\Big\lceil\frac{n}{k}\Big\rceil-1\Big)+1\ \leq a_j\ \leq (j-1)\Big(\Big\lfloor\frac{n}{k}\Big\rfloor+1\Big)+1.
\]
Moreover, we have
\[
  \sum_{i=1}^k e_{\sigma}(C_i,C_j)=e_{\sigma}([n],C_j)=\sum_{h=a_j}^{a_j+|C_j|-1}(h-1)=\frac{|C_j|}{2}(a_j-1+a_j+|C_j|-2)
  =|C_j|\Big(a_j+\frac{|C_j|-3}{2}\Big).
\]
Combining the previous equations with the condition $j\leq k\leq n/4k$, we obtain
\[
  \sum_{i=1}^k e_{\sigma}(C_i,C_j)\leq |C_j|\Big\lfloor\frac{n}{k}\Big\rfloor
  \Big(j-1+\frac{j}{\lfloor n/k\rfloor}+\frac{|C_j|-3}{2\lfloor n/k\rfloor}\Big)\leq |C_j|\Big\lfloor\frac{n}{k}\Big\rfloor j,  
\]
\[
  \sum_{i=1}^k e_{\sigma}(C_i,C_j)\geq |C_j|\Big\lceil\frac{n}{k}\Big\rceil
  \Big(j-1-\frac{(j-2)}{\lceil n/k\rceil}+\frac{|C_j|-3}{2\lceil n/k\rceil}\Big)
  \geq |C_j|\Big\lceil\frac{n}{k}\Big\rceil (j-1).
\]
Finally, these upper and lower bounds lead to
\[
  \sum_{i=1}^k Q_P(i,j)= \frac{1}{|C_j|}\sum_{i=1}^k\frac{e_{\sigma}(C_i,C_j)}{|C_i|}
  \leq \frac{1}{|C_j|\lfloor\frac{n}{k}\rfloor}\sum_{i=1}^k e_{\sigma}(C_i,C_j)
  \leq \frac{1}{|C_j|\lfloor\frac{n}{k}\rfloor}|C_j|\Big\lfloor\frac{n}{k}\Big\rfloor j\ =\ j,
\]
\[
  \sum_{i=1}^k Q_P(i,j)= \frac{1}{|C_j|}\sum_{i=1}^k\frac{e_{\sigma}(C_i,C_j)}{|C_i|}
  \geq \frac{1}{|C_j|\lceil\frac{n}{k}\rceil}\sum_{i=1}^k e_{\sigma}(C_i,C_j)
  \geq \frac{1}{|C_j|\lceil\frac{n}{k}\rceil}|C_j|\Big\lceil\frac{n}{k}\Big\rceil (j-1)\ =\ j-1.
\]
This establishes our result.
\end{proof}

\begin{proof}[Proof of Lemma~\ref{prop_weak}]
Without loss of generality, we fix $0<\eps<1/2$ and let $k \geq 8/\eps^2$. Let $P$ be a uniform equitable $k$-partition of the set $[n]$, $n>2k$, and consider a permutation $\sigma$ on $[n]$. Let $Q$ be the partition matrix of $\sigma$ induced by $P$. The adjacency matrix $Q_{\sigma}$ and the blow-up matrix $\mathcal{K}(P,Q)$ are denoted by $Q_1$ and $Q_2$, respectively. Given subsets $S,T \subseteq I[n]$, we let $e(S,T)=e_\sigma(S,T)$ be the number of edges with one endpoint in $S$ and the other in $T$ in the permutation graph $G_\sigma$, while $d(S,T)=e(S,T)/(|S| |T|)$ is the edge density between these sets in $G_\sigma$. 

We need to show that $d_{\square}(Q_1,Q_2) \leq \eps$, that is, for every pair $a<b \in [n+1]$ and every interval $S \in I[n]$, the following inequality holds:
$$\Delta(a,b,S)=\frac{1}{n}
  \Big|\sum_{x\in S}\Big((Q_1(x,b)-Q_1(x,a))-(Q_2(x,b)-Q_2(x,a))\Big)\Big| \leq \eps.$$

So, fix $a<b \in [n+1]$ and an interval $S \in I[n]$. If $|S| \leq \eps n$, then the inequalities $0 \leq Q_{\ell}(x,b)-Q_{\ell}(x,a) \leq 1$ valid for $\ell \in \{1,2\}$ immediately lead to $\alpha(a,b,S) \leq \eps$.

We thus assume that $|S| > \eps n$. By definition, $Q_1(x,a)=1$ if $\sigma(x)<a$ and $Q_1(x,a)=0$ otherwise. 
\begin{equation}\label{tapsi1a}
\sum_{x \in S} Q_1(x,a)=e(S,\{a\})=|S|\cdot d(S,\{a\}).
\end{equation}
Clearly, we also have $\sum_{x \in S} Q_1(x,b)= d(S,\{b\}).$ Now, let $C_\alpha$ and $C_\beta$, $\alpha,\beta \in [k]$ be the intervals in the $k$-partition $P$ containing $a$ and $b$, respectively, and let $\ell,r \in [k]$ be the indices of the leftmost and the rightmost intervals of $P$ whose intersection with $S$ is non-empty. Then
\begin{equation}\label{tapsi2a}
\begin{split}
\sum_{x\in S} & Q_2(x,a)=\sum_{i=1}^{k} \sum_{x\in C_i \cap S} Q_2(x,a)= \sum_{i=1}^{k}\sum_{x \in C_i \cap S} d(C_i,C_{\alpha})=\sum_{i=1}^{k} |C_i \cap S| d(C_i,C_{\alpha})\\
&=\sum_{i=1}^{k} |C_i \cap S| d(C_i \cap S,C_{\alpha})+\sum_{q \in \{\ell,r\}}|C_q \cap S| \left(d(C_q,C_{\alpha})-d(C_q \cap S, C_{\alpha}) \right)\\
&= |S| d(S,C_{\alpha})+\sum_{q \in \{\ell,r\}}|C_q \cap S| \left(d(C_q,C_{\alpha})-d(C_q \cap S, C_{\alpha}) \right).
\end{split}
\end{equation}

The following analogous formula holds for $b$:
$$\sum_{x\in S} Q_2(x,b)= |S| d(S,C_{\beta})+\sum_{q \in \{\ell,r\}}|C_q \cap S| \left(d(C_q,C_{\beta})-d(C_q \cap S, C_{\beta}) \right).$$
As $|C_q \cap S| \leq |C_q| \leq \lceil n/k \rceil$, we have
$$\sum_{q \in \{\ell,r\}}|C_q \cap S| \left|d(C_q,C_{j})-d(C_q \cap S, C_{j}) \right| \leq 2 \left\lceil \frac{n}{k} \right\rceil \leq \frac{4n}{k}$$
for $j \in \{\alpha,\beta\}$. Using the triangle inequality and equations~(\ref{tapsi1a}),~(\ref{tapsi2a}) and their counterparts with $b$ replacing $a$, we have
\begin{equation}\label{tapsi3a}
\begin{split}
&\frac{1}{n}\Big|\sum_{x\in S}\Big((Q_1(x,b)-Q_1(x,a))-(Q_2(x,b)-Q_2(x,a))\Big)\Big|\\
&\leq |d(S,\{b\})-d(S,C_{\beta})|+|d(S,\{a\})-d(S,C_{\alpha})|+\frac{8}{k}.
\end{split}
\end{equation}

To conclude the proof, we estimate $|d(S,\{a\})-d(S,C_{\alpha})|$. Let $n_\ell$, $n_c$ and $n_r$ be the number of elements of $\sigma(S)$ on the left of $C_{\alpha}$, on the left of $a$ in $C_{\alpha}$ and on the right of $a$ in $C_{\alpha}$, respectively. By definition,
\begin{equation*}
\begin{split}
&n_{\ell}+n_c \leq e(S,\{a\}) \leq n_{\ell}+n_c +1,\\
&n_{\ell}|C_{\alpha}| \leq e(S,C_{\alpha}) \leq  (n_{\ell}+n_c +n_{r}+1) |C_{\alpha}|.
\end{split}
\end{equation*}
If we divide the first equation by $|S|$ and the second by $|S||C_{\alpha}|$, we may use them to obtain
$$-(n_r+1)/|S| \leq d(S,\{a\})-d(S,C_{\alpha}) \leq (n_c+1)/|S|.$$
As $n_c$ and $n_r$ are smaller than $|C_{\alpha}|<2n/k$, $k>8/\eps^2$ and $|S| \geq \eps n$, we have
$$|d(S,\{a\})-d(S,C_{\alpha})| \leq |C_{\alpha}|/|S| \leq \frac{2}{k\eps} \leq \frac{\eps}{4}.$$
Analogously, $|d(S,\{b\})-d(S,C_{\beta})|\leq \eps/4$. Equation~(\ref{tapsi3a}) now tells us that
$$\frac{1}{n}\Big|\sum_{x\in S}\Big((Q_1(x,b)-Q_1(x,a))-(Q_2(x,b)-Q_2(x,a))\Big)\Big| \leq \eps/2 + \eps^2 < \eps,$$
as $\eps<1/2$. This establishes our result.
\end{proof}

\begin{proof}[Proof of Lemma~\ref{lema.cut.subpermut}] Let $m=|\tau|$ and define $\Delta T\ =\ \binom{n}{m}\left|t(\tau,Q_1)-t(\tau,Q_2)\right|.$ Given $X=(x_1,\ldots,x_m)\in[n]^m_<$ and $A=(a_1,\ldots,a_m)\in[n]^m_<$, we consider, for $\ell \in \{1,2\}$, the quantity $Y_{\ell}(i)=Q_{\ell}(x_i,a_{\tau(i)})-Q_{\ell}(x_i,a_{\tau(i)}-1)$. By the definition of subpermutation density in a weighted permutation, we have
\[
  \Delta T\ =\ \Big|\sum_{X\in[n]^m_<}\sum_{A\in[n+1]^m_<}\Big(\prod_{i=1}^m Y_1(i)-\prod_{i=1}^m Y_2(i)\Big)\Big|
\]
\[
  =\ \Big|\sum_{X\in[n]^m_<}\sum_{A\in[n+1]^m_<}\sum_{i=1}^m\Big(Y_1(i)-Y_2(i)\Big)
  \prod_{u=1}^{i-1} Y_1(u)\prod_{v=i+1}^m Y_2(v)\Big|.
\]
Let $X_{(i)}=(x_1,\ldots,x_{i-1},x_{i+1},\ldots,x_m)\in[n]^{m-1}_<$ be the vector obtained from $X$ by the removal of the entry $x_i$, and let $A_{(i)}\in [n+1]^{m-1}_<$ be the corresponding vector for the removal of the entry $a_{\tau(i)}$ from $A$. For use in summations, we set $x_0=0$, $x_{m+1}=n+1$, $a_0=0$ and $a_{m+1}=n+1$. Using this, the above equation becomes
\[
  \Delta T\ =\ \Big|\sum_{i=1}^m\sum_{X_{(i)}}\sum_{x_i=x_{i-1}+1}^{x_{i+1}-1}
  \sum_{A_{(i)}}\sum_{a_{\tau(i)}=a_{\tau(i)-1}+1}^{a_{\tau(i)+1}-1}\Big(Y_1(i)-Y_2(i)\Big)
  \prod_{u=1}^{i-1} Y_1(u)\prod_{v=i+1}^m Y_2(v)\Big|
\]
\[
  \leq\sum_{i=1}^m\sum_{X_{(i)}}\sum_{A_{(i)}}
  \Big|\sum_{x_i=x_{i-1}+1}^{x_{i+1}-1}
  \sum_{a_{\tau(i)}=a_{\tau(i)-1}+1}^{a_{\tau(i)+1}-1}\Big(Y_1(i)-Y_2(i)\Big)\Big| \prod_{u=1}^{i-1} Y_1(u)\prod_{v=i+1}^m Y_2(v).
\]
Observe that
\[
  \sum_{a_{\tau(i)}=a_{\tau(i)-1}+1}^{a_{\tau(i)+1}-1}\Big(Y_1(i)-Y_2(i)\Big)\ =\
  \Big(Q_1(x_i,a_{\tau(i)+1}-1)-Q_1(x_i,a_{\tau(i)-1})\Big)\
\]
\[\ \ \ \ \ \ \ \ \ \ \ \ \ \ \ \ \ \ \ \ \ \ \ \ \ \ \ \ \ \ \ \ \ \ \ \ \ \ \  
  - \Big(Q_2(x_i,a_{\tau(i)+1}-1)-Q_2(x_i,a_{\tau(i)-1})\Big),
\]

Definition~\ref{def.dist.pond} now leads to
\[
  \Delta T\leq\sum_{i=1}^m\sum_{X_{(i)}\in[n]^{m-1}_<}\sum_{A_{(i)}\in[n+1]^{m-1}_<}
 d_{\square}(Q_1,Q_2)\, n \prod_{u=1}^{i-1} Y_1(u)\prod_{v=i+1}^m Y_2(v)
\]
\[
  =\ d_{\square}(Q_1,Q_2)\, n\sum_{i=1}^m\sum_{X_{(i)}\in[n]^{m-1}_<}\sum_{A_{(i)}\in[n+1]^{m-1}_<}
  \prod_{u=1}^{i-1} Y_1(u)\prod_{v=i+1}^m Y_2(v).
\]
Also note that
\[
  \sum_{A_{(i)}\in[n+1]^{m-1}_<}\prod_{u=1}^{i-1} Y_1(u)\prod_{v=i+1}^m Y_2(v)\ \leq\ 1,
\]
since we may eliminate the variables of $A_{(i)}$ one by one, as follows. To simplify notation, we suppose here that  $i\not\in[\tau(m)-1,\tau(m)+1]$. Then
\[
  \sum_{A_{(i)}}\prod_{u=1}^{i-1} Y_1(u)\prod_{v=i+1}^m Y_2(v)\
  =\sum_{A'_{(i)}\in[n+1]^{m-2}_<}\prod_{u=1}^{i-1} Y_1(u)
  \prod_{v=i+1}^{m-1} Y_2(v)\sum_{a_{\tau(m)}=a_{\tau(m)-1}+1}^{a_{\tau(m)+1}-1}Y_2(m),
\]
where $A'_{(i)}\in[n+1]^{m-2}_<$ is obtained from $A_{(i)}\in[n+1]^{m-1}_<$ by removing the coordinate $a_{\tau(m)}$. Now,
\[
  \sum_{a_{\tau(m)}=a_{\tau(m)-1}+1}^{a_{\tau(m)+1}-1}Y_2(m)\
  =\ \Big(Q_2(x_m,a_{\tau(m)+1}-1)-Q_2(x_m,a_{\tau(m)-1})\Big)\ \leq\ 1.
\]

As a consequence
\[
  \Delta T\leq\ d_{\square}(Q_1,Q_2)\cdot n\cdot m\cdot \binom{n}{m-1}.
\]
Dividing this equation by $\binom{n}{m}$ and using the inequality $n\geq 2m$,
we have
\[
  \Big|t(\tau,Q_1)-t(\tau,Q_2)\Big|\ \leq\ d_{\square}(Q_1,Q_2)\cdot n\cdot m\cdot\frac{m}{n-m+1}\
  \leq\ 2m^2\cdot d_{\square}(Q_1,Q_2),
\]
concluding our proof.
\end{proof}

\section{Proofs of Claims~\ref{eq13b} and~\ref{claim_UB2}}\label{sec.claim.proofs}

For completeness, the proofs of Claims~\ref{eq13b} and~\ref{claim_UB2} are provided in this section. 

\begin{proof}[Proof of Claim~\ref{eq13b}] The fact that, by Lemma~\ref{lemaLS.5.1}, $P_{m,j}$ is a weak $(1/j)$-regular $k_j$-partition of $\sigma_m'$ immediately implies
\begin{equation}\label{eq12}
d_{\square}(Q_{\sigma'_m},R_{m,j})\leq\frac{1}{j}.
\end{equation}
We claim that
\begin{equation}\label{tapsi3}
d_{\square}(R_{m,j},S_{m,j})\leq 2d_{\square}(Q_{m,j},Q_j)+\frac{2}{k_j}+\frac{2}{|\sigma_m'|},
\end{equation}
so that part (ii) of Lemma~\ref{lemaLS.5.1} leads to
\begin{equation}\label{eq13}
d_{\square}(R_{m,j},S_{m,j}) \leq 2\left(\frac{1}{j}+\frac{1}{k_j}+\frac{1}{|\sigma_m'|}\right).
\end{equation}
Claim~\ref{eq13b} now follows from the triangle inequality, together with equations~(\ref{eq12}) and~(\ref{eq13}).

To establish~(\ref{tapsi3}), recall that, given an equitable partition  $P_{m,j}=\bigcup_\ell C_{m,j,\ell}$ of $\sigma_m'$, the matrix $R_{m,j}$ is obtained from $Q_{m,j}$ by replacing each single entry $(r,s)$ by a block of size $|C_{m,j,r}||C_{m,j,s}|$, all of whose entries assume the same value. This also holds for $S_{m,j}$ and $Q_j$. Now, to express $d_{\square}(R_{m,j},S_{m,j})$ as in Definition~\ref{def.dist.pond}, let $S=[s_0,s_1] \in I[|\sigma_m'|]$ and let $a<b \leq |\sigma_m'|+1$. Let $\ell_0, \ell_1, \ell_a, \ell_b \in [k_j]$ be such that $s_0 \in C_{m,j,\ell_0}$, $s_1 \in C_{m,j,\ell_1}$, $a \in C_{m,j,\ell_a}$ and $b \in C_{m,j,\ell_b}$ respectively. Fix $s_0'=\sum_{\ell=1}^{\ell_0}|C_{m,j,\ell}|$ and $s_1'=\sum_{\ell=1}^{\ell_1-1}|C_{m,j,\ell}|$. Observe that the summation  
$$\sum_{x\in S}\Big((R_{m,j}(x,b)-R_{m,j}(x,a))-(S_{m,j}(x,b)-S_{m,j}(x,a))\Big)$$ may be split into three intervals, namely $[s_0,s_0']$, $[s_0'+1,s_1']$ and $[s_1'+1,s_1]$. Because our partition is equitable, the first and third interval contain at most $\lceil |\sigma_m'|/k_j \rceil$ terms, all of which have value in the interval $[-1,1]$. As a consequence,
\[
\Big|\sum_{x\in S \setminus [s_0',s_1']}\Big((R_{m,j}(x,b)-R_{m,j}(x,a))-(S_{m,j}(x,b)-S_{m,j}(x,a))\Big) \Big| < 2\left\lceil\frac{\sigma_m'}{k_j}\right\rceil.
\]
On the other hand, it is easy to see that
\begin{equation*}
\begin{split}
\Big|\sum_{x=s_0'}^{s_1'}&\Big((R_{m,j}(x,b)-R_{m,j}(x,a))-(S_{m,j}(x,b)-S_{m,j}(x,a))\Big) \Big| \\
&= \Big|\sum_{y=\ell_0}^{\ell_1-1}|C_y|\Big((Q_{m,j}(y,\ell_b)-Q_{m,j}(y,\ell_a))-(Q_{j}(y,\ell_b)-Q_{j}(y,\ell_a))\Big) \Big| \\
& \leq \left\lceil\frac{|\sigma_m'|}{k_j}\right\rceil \Big|\sum_{y=\ell_0}^{\ell_1-1}\Big((Q_{m,j}(y,\ell_b)-Q_{m,j}(y,\ell_a))-(Q_{j}(y,\ell_b)-Q_{j}(y,\ell_a))\Big) \Big|\\
& \leq \left\lceil\frac{|\sigma_m'|}{k_j}\right\rceil k_j d_{\square}(Q_{m,j},Q_j).
\end{split}
\end{equation*}
We conclude that
$$d_{\square}(R_{m,j},S_{m,j}) \leq \frac{1}{|\sigma_m'|} \cdot  2\left\lceil\frac{|\sigma_m'|}{k_j}\right\rceil + \frac{k_j}{|\sigma_m'|}\left\lceil\frac{|\sigma_m'|}{k_j}\right\rceil  d_{\square}(Q_{m,j},Q_j).$$
Our claim now follows from the fact that $\left\lceil\frac{|\sigma_m'|}{k_j}\right\rceil \leq \frac{|\sigma_m'|}{k_j}+1$ and $\frac{k_j}{|\sigma_m'|} \leq 1$. \end{proof}

\begin{proof}[Proof of Claim~\ref{claim_UB2}] The proof here resembles that of the previous claim, therefore we only provide a sketch. By Lemma~\ref{lema_auxiliar}, the quantities
$|t(\tau,Z_{m,j})-t(\tau,S_{m,j})|$ and $|t(\tau,Z_{j})-t(\tau,Q_{j})|$ can be made sufficiently small for any fixed permutation $\tau$ if we choose $m$ and $j$ is sufficiently large. We use Definition~\ref{def.dens.perm2} and relate the terms in the expression
\begin{equation}\label{tapsi4}
\sum_{X\in[|\sigma_m'|]^{|\tau|}\cdot}\sum_{A\in[|\sigma_m'|+1]^{|\tau|}\cdot}
\prod_{i=1}^{|\tau|}\Big(S_{m,j}(x_i,a_{\tau(i)})-S_{m,j}(x_i,a_{\tau(i)}-1)\Big)
\end{equation}
with the terms in
\begin{equation}\label{tapsi5}
\sum_{X\in[k_j]^{|\tau|}\cdot}\sum_{A\in[k_j+1]^{|\tau|}\cdot}
\prod_{i=1}^{|\tau|}\Big(Q_{j}(x_i,a_{\tau(i)})-Q_j(x_i,a_{\tau(i)}-1)\Big).
\end{equation}
It is easy to see that, for a fixed $x_j$, every term in the inner sum of~(\ref{tapsi5}) corresponds to $\displaystyle{|C_{m,j,l}|^{|\tau|}}$ identical terms in~(\ref{tapsi4}), where $P_{m,j}=(C_{m,j,l})_{l=1}^{k_j}$ is the equitable partition of $\sigma_m'$ originating $S_{m,j}$. This quantity may be approximated by $\displaystyle{\left(|\sigma_{m}'|/k_j\right)^{|\tau|}}$. Our results follows immediately from fact that
$$\binom{|\sigma_m'|}{|\tau|}^{-1}\binom{k_j}{|\tau|}\left(\frac{|\sigma_{m}'|}{k_j}\right)^{|\tau|} \to 1$$
as $m$ tends to infinity. \end{proof}


\begin{thebibliography}{99}

\bibitem{alon_shapira3} N. Alon and A.~Shapira, A characterization of the (natural) graph properties testable with one-sided error, SIAM J. Computing {\bf 37} (2008), 1703--1727.

\bibitem{alon_spencer00} N. Alon and J. Spencer, The probabilistic method, 2nd ed., \emph{Wiley Pub.}, 2000.

\bibitem{benjamini_schramm} I. Benjamini and O. Schramm, Recurrence of distributional limits of finite planar graphs, \emph{Electronic Journal of Probability} {\bf 6} (2001), R\#23, 1--13.

\bibitem{borgsstoc} C. Borgs, J. T. Chayes, L. Lov\'{a}sz, V. T. S\'{o}s, B. Szegedy and K. Vesztergombi, Graph limits and paremeter testing, Proceedings of the 38th Annual ACM Symposium on the Theory of Computing (STOC) (2006), 261--270.

\bibitem{borgs06b} C. Borgs, J. T. Chayes, L. Lov\'{a}sz, V. T.~S\'{o}s, B. Szegedy and K. Vesztergombi, Convergent sequences of dense graphs I: Subgraph frequencies, metric properties and testing, {\em Advances in Mathematics} {\bf 219} (6) (2008), 1801--1851.

\bibitem{borgs06c} C. Borgs, J. T. Chayes, L. Lov\'{a}sz, V. T. S\'{o}s, B. Szegedy and K. Vesztergombi, Convergent sequences of dense graphs II: Multiway cuts and statistical physics, {\em preprint}.

\bibitem{cooper1} J. Cooper, A permutation regularity lemma, {\em The Electronic Journal of Combinatorics} {\bf 13} R\#22 (2006).

\bibitem{cooper2} J. Cooper, Quasirandom Permutations,
  {\em Journal of Combinatorial Theory Series A} {\bf 106} (2004), 123--143.

\bibitem{doob53} J. L. Doob, Stochastic Processes, \emph{Wiley Pub. Stat.}, 1953.

\bibitem{elek1} G. Elek, On limits of finite graphs, {\em Combinatorica} {\bf 27} (2007), 503--507.

\bibitem{elek} G. Elek, On the limit of large girth graph sequences, {\em Combinatorica} {\bf 30 (5)} (2010), 553--563.

\bibitem{elek_szegedy} G. Elek and B. Szegedy, Limits of hypergraphs, Removal and Regularity Lemmas. A non-standard approach, {\em preprint}.

\bibitem{erdos_renyi} P. Erd\H{o}s and A. R\'{e}nyi, On random graphs I, \emph{Publicationes Mathematicae} \textbf{6} (1959): 290--297.

\bibitem{frieze_kannan} A. Frieze and R. Kannan, Quick approximation to matrices and applications, \emph{Combinatorica} \textbf{19} (1999), 175--220.

\bibitem{rudini08a} C. Hoppen, Y. Kohayakawa, and  R. M. Sampaio, A note on permutation regularity, \emph{Electronic Notes in Discrete Mathematics},  Proc. of the V Latin-American Algorithms, Graphs and Optimization Symposium (LAGOS), Electronic Notes in Discrete Mathematics {\bf 35 (1)} (2009), 183--188.

\bibitem{rudini08b} C. Hoppen, Y. Kohayakawa, and  R. M. Sampaio, A note on permutation regularity, \emph{to appear in Discrete Applied Mathematics}, 14pp.

\bibitem{rudini08d} C. Hoppen, Y. Kohayakawa, C. G. Moreira and  R. M. Sampaio, Testing permutation properties through subpermutations, \emph{Theoretical Computer Science} {\bf 412 (29)}, 3555--3567. 

\bibitem{balazs}  C. Hoppen, Y. Kohayakawa, C. G. Moreira, B. R\'{a}th and  R. M. Sampaio, Limits of permutation sequences, \emph{preprint} (2011), 21 pp.

\bibitem{konig} D. K\"{o}nig, Theorie der Endlichen und Unendlichen Graphen: Kombinatorische Topologie der Streckenkomplexe,
  \emph{Akad. Verlag}, 1936.

\bibitem{loeve77} M. Lo\`{e}ve, Probability Theory I, 4th edition, \emph{Springer-Verlag}, 1977.

\bibitem{lovasz} L. Lov\'{a}sz, Very large graphs, \emph{preprint} (2009), 63pp.

\bibitem{lovasz_sos08} L. Lov\'{a}sz and V. T. S\'{o}s, Generalized quasirandom graphs, \emph{Journal of Combinatorial Theory Series B} \textbf{98 (1)}  (2008), 146--163.
  
\bibitem{lovasz06} L. Lov\'{a}sz and B. Szegedy, Limits of dense graph sequences,
  {\em Journal of Combinatorial Theory Series B} {\bf 96} (2006), 933--957.

\bibitem{oxtoby} J.C.~Oxtoby: Measure and Category, \emph{Springer Verlag}, 1980.

\bibitem{szem76} E. Szemer\'edi, Regular partitions of graphs,
Colloques Internationaux C.N.R.S. No: 260 - Probl\`{e}mes Combinatoires et Th\'{e}orie des Graphes, Orsay (1976), 399--401.

\end{thebibliography}
\end{document}